\newcommand{\x }{\bm{x}}
\def\nst2{\| _*} 
\def\a12{A_h ^{1/2} } 
\def\d{{\mathrm d}}
\def\tr|{|\! |\! |}
\def\R {{\mathbb R}}
\def\P{{\mathbb P}}
\def\N{{\mathbb N}}
\def\N{{\mathbb N}}
\def\E{{\mathcal{E}}}
\def\D{\mathscr{D}}
\def\AA{\text{${\mathcal O}\hskip-3pt\iota$}}
\def\wU{\widehat U}
\def \x{\overline x}
\def \a{\alpha }
\def\T_h{{{\mathcal T}_h}}
\def\<{{\langle }}
\def\>{{\rangle }}
\def\V{{\mathcal{V}}}
\def\MR#1{\href{http://www.ams.org/mathscinet-getitem?mr=#1}{MR#1}}
\def\nst2{\| _*} 
\def\a12{A_h ^{1/2} } 
\def\d{{\mathrm d}}
\def\tr|{|\! |\! |}
\def\R {{\mathbb R}}
\def\P{{\mathbb P}}
\def\N{{\mathbb N}}
\def\N{{\mathbb N}}
\def\E{{\mathcal{E}}}
\def\D{\mathscr{D}}
\def\H{{\mathcal{H}}}
\def\AA{\text{${\mathcal O}\hskip-3pt\iota$}}
\def\wU{\widehat U}
\def \a{\alpha }
\def\T_h{{{\mathcal T}_h}}
\def\<{{\langle }}
\def\>{{\rangle }}
\def\V{{\mathcal{V}}}
\DeclareSymbolFont{matha}{OML}{txmi}{m}{it}
\DeclareMathSymbol{\varv}{\mathbf}{matha}{118}
\def\D{\mathscr{D}}
\newcommand\restr[2]{{
  \left.\kern-\nulldelimiterspace 
  #1 
  \vphantom{\big|} 
  \right|_{#2} 
  }}
\NewDocumentCommand{\dgal}{sO{}m}{%
  \IfBooleanTF{#1}
    {\dgalext{#3}}
    {\dgalx[#2]{#3}}%
}
\NewDocumentCommand{\dgalext}{m}{%
  \sbox0{%
    \mathsurround=0pt 
    $\left\{\vphantom{#1}\right.\kern-\nulldelimiterspace$%
  }%
  \sbox2{\{}%
  \ifdim\ht0=\ht2
    \{\kern-.625\wd2 \{#1\}\kern-.625\wd2 \}%
  \else
    \left\{\kern-.7\wd0\left\{#1\right\}\kern-.7\wd0\right\}%
  \fi
}
\NewDocumentCommand{\dgalx}{om}{%
  \sbox0{\mathsurround=0pt$#1\{$}%
  \sbox2{\{}%
  \ifdim\ht0=\ht2
    \{\kern-.625\wd2 \{#2\}\kern-.625\wd2 \}%
  \else
    \mathopen{#1\{\kern-.7\wd0 #1\{}
    #2
    \mathclose{#1\}\kern-.7\wd0 #1\}}
  \fi
}
\renewcommand{\R}{{\mathbb{R}}}
\renewcommand{\T}{\mathbb{T}^d}
\begin{document}


\title
{\sc Runge--Kutta Physics Informed Neural Networks:\\ Formulation and Analysis}

 \author{ \name{\begin{center}{Georgios Akrivis$^{1,3}$,   Charalambos G.\ Makridakis$^{2, 3, 4}$  \and  Costas Smaragdakis$^{ 3, 5}$ }\end{center}} 
         \addr{\begin{center}{ {$^{1}$ DCSE, University of Ioannina, Greece \\$^{2}$ DMAM, University of Crete, Greece \\
         $^{3 }$ IACM-FORTH, Greece
   }\\
    $^{4 }$ MPS, University of Sussex, United Kingdom\\
    $^{5}$ DSAFM, University of the Aegean, Greece } \end{center} }}


\editor{}
\maketitle
\abstract{In   this paper 
we consider time-dependent  PDEs discretized by a special class of  Physics Informed Neural Networks 
whose design is based on the framework of Runge--Kutta and related time-Galerkin discretizations. 
The primary motivation for using such methods is that alternative time-discrete schemes not only enable 
higher-order approximations but also have a crucial impact on the \emph{qualitative behavior} of the discrete solutions. 
The design of the methods  follows a novel training approach based on two key principles: 
(a) \emph {the discrete loss is designed using a time-discrete framework}, and 
(b) \emph {the final loss formulation incorporates Runge--Kutta or time-Galerkin discretization} in a carefully structured manner. 
We then demonstrate that the resulting methods inherit the stability properties of the Runge--Kutta 
or time-Galerkin schemes, and furthermore, their computational behavior aligns  with that of the original 
time discrete method used in their formulation. 
In our analysis, we focus on linear parabolic equations, demonstrating both the stability of the methods 
and the convergence of the discrete minimizers to solutions of the underlying evolution PDE. An important 
novel aspect of our work is the derivation of maximal regularity (MR) estimates for B-stable Runge--Kutta 
schemes and both continuous and discontinuous Galerkin time discretizations. This allows us to provide 
new energy-based proofs for maximal regularity estimates previously established by Kov\'acs, Li, and Lubich 
\cite{kovacs2016stable},  now in the Hilbert space setting and with the flexibility of variable time steps.}

\section{Introduction}\label{Se:1}

\subsection{Evolutionary PDEs and Neural Network Discretizations}\label{SSe:1.1}
  In   this paper, 
we consider time-dependent  PDEs discretized by a special class of  Physics Informed Neural Networks whose design is based 
on the framework of Runge--Kutta and related time-Galerkin discretizations. 
The key motivation for adopting such methods lies in their ability not only to achieve higher-order approximations but also to significantly influence the \emph{qualitative behavior} of the discrete solutions. These physically consistent approximations are critical for numerous applications, as underscored 
by the rich and significant 
literature
on time-discrete methods in numerical analysis and scientific computation;
 see, e.g., \cite{Butcher_1964}, \cite{Crouzeix_1}, \cite{Crouzeix_2},  \cite{BUTCHER1996113}, 
 \cite{Glow_theta1987}, \cite{HairerNW1993}, \cite{Hairer2002}, \cite{Thomee_book_2006}. Characteristic examples include 
 conservative schemes for wave and Schrödinger equations, dissipative 
 schemes for diffusion equations preserving the smoothing effect of the equation, geometry and structure preserving schemes, and specially tuned time discretizations 
 for the Navier--Stokes equations.


\subsubsection*{Formulation of the RK-PINN methods}
Physics Informed Neural Networks are algorithms where the discretization is based on the minimization of the $L^2$ 
norm of the residual of the evolution PDE over a set of neural networks with a given architecture. Computing the loss 
requires an additional quadrature step to evaluate the space-time integrals, a process known as \emph{training.} 
Standard approaches, particularly in high dimensions, often rely on probabilistic quadrature methods like Monte Carlo 
or Quasi-Monte Carlo. However, the impact of these discrete losses on the qualitative behavior of the approximations remains
 largely unexplored. Notably, even for the scalar wave equation, we lack clarity on whether and under what conditions 
 these methods preserve key conservation properties.

In this work, we propose a novel training approach based on two key principles: 

(a) \emph {the discrete loss is designed using a time-discrete framework}, and 

(b) \emph {the final loss formulation incorporates Runge--Kutta or time-Galerkin discretization} in a carefully structured manner.  

\noindent
We then demonstrate that the resulting methods inherit the stability properties of the Runge--Kutta or time-Galerkin 
schemes, and furthermore, their computational behavior aligns  with that of the original time discrete method used in their formulation. Since time-discrete training affects only the time variable, the resulting  schemes are particularly well-suited for high-dimensional evolution problems.

\subsubsection*{Analysis through novel Maximal Regularity estimates}
In our analysis, we focus on linear parabolic equations, demonstrating both the stability of the methods 
and the convergence of the discrete minimizers to solutions of the underlying evolution PDE. Following 
the approach in \cite{GGM_pinn_2023}, we employ the liminf-limsup framework of De Giorgi (see Section 2.3.4 
of \cite{DeGiorgi_sel_papers:2013} and \cite{braides2002gamma}), which is widely used in the 
$\varGamma$-convergence of functionals for nonlinear PDEs.

We first establish that the proposed methods yield stable functionals in the sense of properties [S1] and [S2], 
as defined in Section \ref{Se:3.1.1}. 
Our analysis reveals that stability is rooted in strong discrete regularity estimates associated with maximal regularity. 
A key novel contribution of our work is the derivation of maximal regularity (MR) estimates for B-stable Runge–Kutta 
schemes, as well as for both continuous and discontinuous Galerkin time discretizations.
This allows us to provide new energy-based proofs for maximal regularity estimates previously established 
by 
 \textcite{kovacs2016stable},  now in the Hilbert space setting and with the flexibility 
of variable time steps. This generalizes the results of \cite{kovacs2016stable}, where constant time steps 
were a key assumption. See also \cite{leykekhman2017discrete}, \cite{akrivis2022maximal}, \cite{KK_J_MaxmalReg_DG_2024}.
An additional interesting feature of our approach is the derivation of the first maximal regularity estimates for high-order Lobatto IIIA methods.

\subsubsection*{\it Plan of the paper}
In  Section \ref{Se:2} we systematically formulate the methods considered in this work. To motivate our approach, we use as a model a linear 
parabolic equation, but the algorithms are clearly applicable to a wide selection of evolution equations. 
First, we consider simple time discrete training methods and then we employ the pointwise formulation of \cite{AMN2} to design the various 
Runge--Kutta Physics Informed Neural Networks (RK-PINNs).    

Section \ref{Se:3}  is devoted to the analysis of the methods in the case of linear parabolic equations by establishing that the neural network 
approximations  satisfy crucial properties related to the  liminf-limsup framework mentioned above, \cite{DeGiorgi_sel_papers:2013}.
To prove stability, Proposition \ref{TD:EquicoercivityofG}, we rely on maximal regularity estimates. Our   novel  maximal regularity estimates 
for   Runge--Kutta   and both continuous and discontinuous Galerkin time discretizations are assumed in Section 3 and are systematically derived 
in Section \ref{Se:4}. Theorem \ref{Thrm:TimeD} demonstrates that the sequence of neural network  minimizers  converges 
to the exact solution of the parabolic equation, i.e., \begin{equation*}
\hat u_\ell \rightarrow u,  \;\; \; \textrm{in} \;\: L^2((0,T); H^1(\Omega))  ;
\end{equation*}
see  Theorem \ref{Thrm:TimeD}  for the precise statement. 
Throughout, we assume that the approximability capacity of the discrete neural network spaces is given; specifically, 
we assume that these spaces can approximate smooth functions as described in \eqref{w_ell_7_TD}, \eqref{w_ell_7_hSm_TD}, 
and Remark \ref{Rmk:NNapproximation}.

Section \ref{Se:4} is dedicated to deriving maximal regularity estimates within an abstract framework for evolution equations 
 of parabolic type in a Hilbert space. The results presented in this section are of independent interest, both for their proof 
 techniques and because they hold  for variable time steps, unlike the results in \cite{kovacs2016stable}.

We present all time-discrete methods in a unified point-wise formulation as given in \eqref{eq:nm-abstr1}. In Section \ref{SSe:Gal}, we focus 
on Galerkin time-stepping methods, including continuous and discontinuous Galerkin methods. Section \ref{SSe:1.3} addresses collocation 
Runge–Kutta methods, with detailed proofs provided for Gauss and Radau IIA methods, as well as for all algebraically stable Runge–Kutta 
methods. Notably,   we derive a novel connection of Lobatto IIIA to continuous Galerkin methods and thus we are able to establish its maximal 
regularity bounds. 

It is worth highlighting that Lobatto IIIA methods are high-order extensions of the trapezoidal rule. Despite being A-stable, their coefficient matrix 
is not invertible, and they lack 
B-stability. As a result, our maximal regularity estimates appear to be the first in the literature 
for this significant class of Runge–Kutta methods.

Section \ref{Se:5} is dedicated to numerical experiments. In addition to parabolic equations, we also consider the wave equation. While the convergence 
theory for hyperbolic equations remains to be developed, the formulation of the methods can be directly applied by expressing the equation as a first-order 
evolution system in time.

The numerical results indicate that our proposed methods exhibit the desired properties. They achieve higher accuracy and, more importantly, 
allow for the adaptation of their qualitative characteristics to produce physically relevant approximations. We did not include computations based 
on explicit Runge–Kutta discretizations. Instead, we refer to \cite{GGM_pinn_2023}, where neural network methods utilizing the \emph{explicit Euler} 
scheme fail to meet the stability criteria [S1] and [S2], resulting in unstable behavior. To regain stability, these methods require the enforcement 
of standard CFL conditions that link the space and time discretization parameters.

For training in the spatial variable, we employed quasi-Monte Carlo sampling. This was done both for comparison with full (space and time) 
quasi-Monte Carlo sampling and to underscore that our approach is well-suited for high-dimensional evolution PDEs. Notably, the time discretization 
introduces a mesh only in the time variable, making the method particularly effective in high dimensions.
 
%

\subsubsection*{\it Remarks on bibliography}
Physics Informed Neural Networks is a class  of neural network based methods to approximate solutions of PDEs, \cite{Karniadakis:pinn:orig:2019}.  
The loss is based on the  residual of the PDE; similar    methods were considered in   \cite{Lagaris_1998}, \cite{Berg_2018},  \cite{Raissi_2018},  
\cite{SSpiliopoulos:2018}, \cite{makr_pryer2024deepUz}.    Different loss functionals leading to various neural network methods for differential 
equations  are considered in \cite{kevr_1992discrete}, \cite{e2017deep}, \cite{kharazmi2019variational}, \cite{Xu}, 
\cite{chen2022deep},  \cite{B_Canuto_P_Vpinn_quadrature_2022}
\cite{georgoulis2023discrete}, 
\cite{Grohs:space_time:2023}.    Previous works on the analysis of these methods include \cite{SSpiliopoulos:2018}, \cite{Mishra_dispersive:2021}, 
\cite{Karniadakis:pinn:conv:2019}, \cite{shin2020error}, \cite{Mishra:pinn:inv:2022,Mishra_gen_err_pinn:2023}, \cite{makr_pryer2024deepUz}, \cite{makr_pryer2024uzLagr}.

Neural network approximations for evolution equations have been previously studied in \cite{SSpiliopoulos:2018}, \cite{Karniadakis:pinn:orig:2019}, 
\cite{Mishra_dispersive:2021}, \cite{Biswas:2022aa}, \cite{georgoulis2023discrete}, and \cite{Grohs:space_time:2023}. In the  works \cite{SSpiliopoulos:2018}, \cite{Karniadakis:pinn:orig:2019}, \cite{Mishra_dispersive:2021},  a global space-time residual-type loss was employed, whereas in \cite{georgoulis2023discrete} 
and \cite{Grohs:space_time:2023}, a time-stepping approach based on the backward Euler or Minimizing Movement schemes was adopted. 
Discrete time models related to Runge--Kutta methods were considered in 
\cite{Karniadakis:pinn:orig:2019}.
%
%
%
%

As mentioned, our stability and convergence analysis follows the framework introduced in \cite{GGM_pinn_2023}, which 
is motivated by $\varGamma$-convergence arguments. In \cite{muller2020deep}, 
$\varGamma$-convergence was employed to analyze deep Ritz methods without considering training. More recently, the 
$\liminf - \limsup \,$ framework was utilized in \cite{loulakis2023newap} to establish convergence results for global and local 
discrete minimizers in general machine learning algorithms with probabilistic training. Additionally, this framework was applied 
in \cite{grekas2024deepritzfinite} to analyze deep Ritz methods trained with finite element techniques.
   


\noindent

\section{Motivation and problem formulation}\label{Se:2}

\subsection{Model problems and their  Machine Learning approximations}\label{Se:1NN}
We will  consider linear evolution   PDEs. The formulation of the methods can be applied to 
parabolic or wave type time dependent equations, linear or nonlinear,  and our main focus is on the effect of the time discretization mechanism. 

\subsection{A linear evolution PDE}
To motivate our approach we consider as a model problem a linear  parabolic equation. 
We use the compact notation $ \varOmega_T = \varOmega \times (0,T] ,$
 for some fixed time $ T>0.$
We consider the initial and boundary value problem
\begin{equation}\label{ParabolicPDE-I}
\left\{
\begin{alignedat}{3}
&u_t + Lu = f  \qquad&&\text{in }  &&\varOmega_T, \\
& u =  0 \qquad&&\text{on }  &&\partial \varOmega \times (0,T] , \\
&  u =  u_0\qquad&&\textrm{in }  &&\varOmega ,
\end{alignedat}
\right.
\end{equation}
%
where $ f \in L^2( \varOmega_T) ,\; u_0 \in H^1_0(\varOmega) $, and $ L $ is a coercive,
self-adjoint, second order elliptic operator. 
The associated energies used will be the $L^2$-residuals  
\begin{equation}\label{Functional}
\mathcal{E}(v) = \int_0^T \int_{\varOmega} |   v_t + Lv  - f |^2 \d  x\, \d t +  |  v(0)  - u_0 |_ {H^1} ^2 
\end{equation}
defined over smooth enough functions and  domains  $\varOmega .$ The choice of the $H^1$ seminorm for the initial  
condition,  $  |  v(0)  - u_0 |_ {H^1},$ is done in order to obtain a balanced energy which is particularly convenient 
in the analysis; obviously, other choices are possible.

\subsubsection*{Nonlinear Spaces generated by Neural Networks}
Physics Informed Neural Networks are based on the minimization of the functional $\mathcal{E}$ 
over a chosen discrete set consisting of neural networks, aiming at approximating $u. $ To fix ideas, we consider functions 
$u_\theta$ defined through neural networks. 
The construction below is typical, and it is presented for completeness. Our results only depend   on the  approximation capacity of these functions. 
A \emph{deep neural network} maps every point $\overline x\in \varOmega \times [0, T] $ to  $u_\theta (\x) \in \R$, through
\begin{equation}\label{C_L}
	u_\theta(\x)= C_L  \circ \sigma  \circ C_{L-1} \cdots \circ\sigma \circ C_{1} (\x) \quad \forall \x\in \varOmega _T.
\end{equation}
%
Any such map $\mathcal{C}_L$ is defined  by the intermediate (hidden) layers $C_k$, which are affine maps of the form 
\begin{equation}\label{C_k}
	C_k y = W_k y +b_k, \qquad \text{where }  W_k \in \R ^ {d_{k+1}\times d_k}, b_k \in \R ^ {d_{k+1}},
\end{equation} 
where the dimensions $d_k$ may vary with each layer $k$ and $\sigma (y)$ denotes the vector with the same number of components as $y$,
where $\sigma (y)_i= \sigma(y_i)\, .$ 
The index $\theta$ represents collectively all the parameters of the network $\mathcal{C}_L,$ namely $W_k , b_k, $ $k=1, \dots, L .$ 
The set of all networks $\mathcal{C}_L$ with a given structure (fixed $L, d_k,  k=1, \dotsc, L\,$) of the form \eqref{C_L}, \eqref{C_k}
is called $\mathcal{N}.$ The total dimension (total number of degrees of freedom) of  $\mathcal {N} ,$ is $\dim{\mathcal {N}}= \sum _{k=1} ^L d_{k+1} (d_k +1)  .$ 
We now define the nonlinear discrete set  of functions 
\begin{equation}
	V _{\mathcal{N}}= \{ u_\theta : \varOmega _T \to \R ,  \ \text{where }  u_\theta (\x) = \mathcal{C}_L (\x), \ \text{for some } \mathcal{C}_L\in  \mathcal{N}\, \}  . 
\end{equation}
Boundary conditions is a subtle issue. To avoid extra technical problems,   it will be useful to introduce, following  \cite{sukumar2022exact}, the set of functions  
which exactly satisfy the boundary conditions through appropriate distance functions depending only on the domain 
$\varOmega  . $ If $\varPhi$ is such a function, see \cite[Section 5.1.1]{sukumar2022exact}, we define 
\begin{equation}\label {V_N0}
	V _{\mathcal{N}, 0}= \{ u_\theta : \varOmega_T  \to \R ,  \ \text{where }  u_\theta (x, t) =  \varPhi (x)\mathcal{C}_L (x, t), \ \text{for some } \mathcal{C}_L\in  \mathcal{N} \,  \} .
\end{equation}
Then, $V _{\mathcal{N}, 0} \subset H^1((0,T) ; L^2 (\varOmega)) \cap L^2((0,T) ; H^2(\varOmega) \cap H^1_0(\varOmega)),$ for smooth enough activation function $\sigma.$

%

\subsubsection*{Abstract Loss -- minimization  on  $V _{\mathcal{N}}$} Physics Informed Neural networks 
are based on the minimization of residual-type functionals of the form \eqref{Functional} over discrete 
neural network sets of given architecture. To this end, we assume that the (non-computable) abstract  problem 
\begin{equation}\label{mm_nn:abstract}
	\min  _ {v \in  V _{\mathcal{N}, 0} } \E (v)
\end{equation}
possesses a solution $v^\star \in V _{\mathcal{N}, 0}  .$ 
%
%
The integrals appearing in the loss functional $\E $ require further discretization to result in a computable loss.  
As the set $ V _{\mathcal{N}, 0} $  is nonlinear, the problem needs to be considered as minimization in the parameter space over  $\R ^{\dim{\mathcal {N}}}$ 
%
\begin{equation}\label{mm_nn:abstract_theta}
	\min _ { \theta \in  \Theta } \E(u_\theta), 
\end{equation}
which turns out to be  non-convex with respect to $\theta$ even though the functional $\E (v)$ is convex with respect to $v.$ 

\subsection{Simple time-discrete Training} To formulate fully discrete   schemes, we shall need  computable discrete 
versions of the energy $\E(u_\theta).$  This can be achieved through deterministic or probabilistic training.

 \subsubsection*{Deterministic and probabilistic training} We consider appropriate quadrature for integrals over 
 $\varOmega_T$ ({Training through quadrature}). Such a quadrature requires 
 a set $K_h$ of discrete points $z\in K_h$  and corresponding nonnegative weights $w_z$
 such that 
 \begin{equation}
\label{quadrature}
\sum _{z\in K_h} \, w_z \, g(z) \approx \int _{\varOmega_T} \, g(\x) \, \d \x .
\end{equation}
  Then, one can define the discrete functional 
\begin{equation}
\label{E_h}
\mathcal{E}_{Q, h}( g )  = \sum _{z\in K_h} \, w_z \,  
| v_t (z) + L v (z) - f(z) |^2\,  \, . 
\end{equation}
The initial condition is discretized in a similar way.  

An alternative is to approximate integrals 
using  probabilistic (Monte Carlo, Quasi-Monte Carlo)  quadrature rules. To this end, 
we may  consider a collection $ X_1,X_2,\ldots$ of i.i.d.\ $\varOmega _T$-valued random variables, defined on an appropriate probability space 
corresponding to sample points in $\varOmega_T\, .$
Let  $\omega $   be a fixed instance, and $X_i(\omega )\in \varOmega_T$ be the corresponding values of the random variables. 
Monte Carlo approximation of the space-time integral yields the discrete sum 
\begin{equation}\label{prob_E}
\mathcal{E}_{N, \omega }( v )  =  \frac 1N  \sum _{i=1 } ^N    \,  | v_t (X_i(\omega) ) + L v (X_i(\omega) ) - f(X_i(\omega) )| ^2 .     
 \end{equation}
 The discrete minimization problem for each instance is 
\begin{equation}\label{ieE-minimize_NNQ}
	\min  _ {v \in  V _{\mathcal{N}, 0 }} \E _{N, \omega }(v)\,  +  \E ^0 _{N, \omega }(v) ,  
\end{equation}
where $\E ^0 _{N, \omega }(v)$ is a Monte Carlo approximation of the initial condition. One of the main advantages 
of these discretizations is that they scale reasonably with the dimension, and are thus preferable for high-dimensional operators $L .$

  \subsubsection*{Time discrete training} To introduce the Runge--Kutta PINN algorithms, it will be  instrumental  
  to consider a hybrid approach where quadrature (and discretization) is applied only to the time variable of the
   time dependent problem.  Then, the fully discrete scheme can be designed using alternative discretizations in space, 
   deterministic or probabilistic.

 To fix notation, let $0=t_0< t_1< \cdots <t_N=T$ define a partition of $[0,T]$ and $J_n:=(t_{n},t_{n+1}],$ $k_n:=t_{n+1}-t_{n}.$
We   denote by $v_m(\cdot)$ and $f_m(\cdot)$ the values $v (\cdot,t_m)$ and $f(\cdot ,t_m).$ Then, we consider 
the discrete in time quadrature 
 \begin{equation}
\label{quadrature_time}
\sum _{n=0}^{N-1} \,  k_n \, g(t_{n+1}) \approx \int_0^T  g(t) \, \d t.
\end{equation}
We proceed to define the time-discrete version of the functional \eqref{Functional} as follows
\begin{equation}\label{Functional_k}
\mathcal{G}_{k, IE} (v) = \sum _{n=0}^{N-1} \,  k_n \, \int_{\varOmega }\big | \frac{v_{n+1}-  v_{n}}{k_n} + L v_{n+1}  - f(t_{n+1}) \big |^2 \, \, \d x 
+    |  v_0  - u_0 |_{H^1(\varOmega)} ^2 \, . 
\end{equation}
In \cite{GGM_pinn_2023}, it was shown that the problem
\begin{equation}\label{ieE-minimize_k}
	\min_ {v \in  V _{\mathcal{N}} } \mathcal{G} _{k, IE}(v)  
\end{equation}
%
%
yields stable and convergent approximations to the exact solution as opposed to the  analogue (from the point of view of quadrature 
and approximation) discrete functional:
\begin{equation}\label{Functional_k_Ex}
\mathcal{G}_{k, EE} (v)  \sum _{n=0}^{N-1} \,  k_n \, \int_{\Omega }\big | \frac{v_{n+1}-  v_{n}}{k_n} + L v_{n}  - f(t_n) \big |^2 \, \, \d x +    |  v_0  - u_0 |_{H^1(\varOmega)} ^2\,.  \end{equation}
Thereby hinting that there is a deeper connection between standard stability notions of time discretizations and this class of neural network algorithms. 
In the analysis, it was instrumental to consider an alternative representation of the time discrete functional through reconstructions. 
This can motivate   the design of the Runge--Kutta PINN methods of the next section. 
Indeed,  let the \emph{linear time reconstruction}
   $\widehat v$ 
 of a time dependent function  $v$  be the piecewise linear approximation   
of $v$ defined by linearly 
interpolating between the nodal values $v_{n}$ and $v_{n+1}$: 
\begin{equation}\label{def_hatU}
\widehat v(t):=\ell_0^n(t) v_{n}+ \ell_1^n(t) v_{n+1}, \quad t\in J_n,
\end{equation}
with
$\ell_0^n(t) := (t_{n+1}-t)/k_n$  and  $\ell_1^n(t) := (t-t_{n})/k_n.$ 
If  the piecewise constant interpolant of $v_{j}$  is denoted by $\overline v, $  
\begin{equation}\label{def_overU}
\overline v(t):= v_{n+1}, \quad t\in J_n\, ,\end{equation}
the time discrete energy  $ \mathcal{G}_{IE, k}   $ becomes
\begin{equation}\label{ParabFunctional_k}
\begin{split}
	\mathcal{G}_{IE, k} (v) &= \| \widehat v_t + L \overline v - \overline f \|^2_{L^2((0,T); L^2(\varOmega))}  + |   \widehat v(0)  - u_0 |_{H^1(\varOmega)}^2 \\
&= \int_0^T \|  \widehat v_t + L \overline v - \overline f \|^2_{L^2(\varOmega)}\, \d t  +   |\widehat v(0)  - u_0 |_{H^1(\varOmega)}^2  .
\end{split}
\end{equation}
This representation of the loss will be generalized to high-order time discretizations in the next section. 

\subsection{Runge--Kutta Physics Informed Neural Networks} 
To introduce the Runge--Kutta PINNs, we first recall the connection of Runge--Kutta methods and collocation time discretizations.

\subsubsection{Collocation Runge--Kutta methods}\label{SSe:1.3}
For $q\in \N,$ let $0\leqslant c_1<\dotsb<c_q\leqslant 1$ denote the intermediate nodes of a Runge--Kutta method or collocation nodes.
With starting value $U_0=u_0,$ we    consider the discretization of   problem \eqref{ParabolicPDE-I}   
by a $q$-stage Runge--Kutta method: we recursively define approximations 
$U_\ell\in V$ to the nodal values $u(t_\ell)$, as well as internal approximations $U_{\ell i}\in V$ 
to the intermediate  values $u(t_{\ell i}), t_{\ell i}=t_\ell+c_ik_\ell,$ by
\begin{equation}\label{eq:RK1-I}
\left \{
\begin{alignedat}{2}
&U_{ni}=U_n- k _n \sum_{j=1}^q a_{ij} \big (L U_{nj} -f(t_{nj})\big ),\quad &&i=1,\dotsc,q,\\
&U_{n+1} =U_n- k _n \sum_{i=1}^q b_i \big (L U_{ni} -f(t_{n i})\big ), \quad &&
\end{alignedat}
\right . 
\end{equation}
$n=0,\dotsc,N-1.$ Here, without being very precise, $V$ is a functional space where our approximations are sought for every $t\in [0, T];$ 
typically $V$ is included in the domain of the operator $L.$   
The most important class of Runge--Kutta methods are the \emph{collocation Runge--Kutta} methods, \cite{GuillouSoule_1969}, 
\cite{Wright_1970}, \cite{HairerNW1993}. Such methods are equivalent to considering  a collocation approximation $\widehat U $ 
which is a continuous piecewise polynomial function of local degree $q$   satisfying    
$\widehat U(0)=u_0$ and  the collocation conditions
\begin{equation}
\label{eq:coll-1-I}
\widehat U'(t_{ni})+L\widehat U(t_{ni})=f(t_{ni}), \quad i=1,\dotsc,q, \quad n=0,\dotsc,N-1.
\end{equation}
The $q$-stage Runge--Kutta method and the collocation points are related through the relations
\begin{equation}
\label{RK-coef}
a_{ij} =\int _0^{c_i} \ell_j (\tau )\, \d \tau, 
\quad b_{i} =\int _0^{1} \ell_i(\tau )\, \d \tau,
\quad   i,j=1,\dotsc,q;
\end{equation}
here, $\ell_1,\dotsc, \ell_q\in \P_{q-1}$ are the Lagrange polynomials for the
collocation nodes $c_1,\dotsc,c_q,$ $\ell_i(c_j)$ $=\delta_{ij}, i,j=1,\dotsc,q.$
In this case, the \emph{stage order} of the Runge--Kutta method is $q.$ 

It is well known,  \cite{GuillouSoule_1969}, \cite{Wright_1970}, \cite{HairerNW1993}, 
 that  the collocation and Runge--Kutta methods \eqref{eq:coll-1-I} and \eqref{eq:RK1-I},
respectively, are equivalent in the sense that they yield the same approximations at the nodes
and at the intermediate nodes, i.e.,
\begin{equation}\label{eq:equivalent-I}
\begin{alignedat}{2}
&\widehat U(t_n)=U_n, \quad &&n=1,\dotsc,N,\\
&\widehat U(t_{ni})=U_{ni}, \quad &&i=1,\dotsc,q, \ \, n=0,\dotsc,N-1.
\end{alignedat}
\end{equation}
%
%
%
%
%
%
%
%
These methods admit a crucial pointwise formulation which is the key element of the loss functional defined below. Indeed, 
as in    \cite{AMN2} and \cite{AMN3}, if we let $I_{q-1}$ be the piecewise interpolation operator by polynomials of degree $q-1$ 
at the collocation nodes $t_{ni}, i=1,\dotsc,q, n=0,\dotsc,N-1,$
and because  $\widehat U'$ is a piecewise polynomial of degree $q-1$ as well, 
we can write \eqref{eq:coll-1-I} in \emph{pointwise form} as 
\begin{equation}
\label{eq:coll-2-I}
\widehat U'(t)+I_{q-1} L\widehat U(t)=I_{q-1} f(t), \quad t\in (t_n,t_{n+1}], \quad n=0,\dotsc,N-1.
\end{equation}
The interpolants $U:=I_{q-1} \widehat U$ and $I_{q-1} f$ are piecewise  
polynomials of degree $q-1$ and, in general, discontinuous at the nodes $t_0,\dotsc,t_{N-1}.$
The pointwise form \eqref{eq:coll-2-I} of the numerical method will be the basis for defining the Runge--Kutta discrete loss.

%
%
%

\subsubsection{Runge--Kutta discrete Loss}
\label{SSe:2.4.2} 
We shall introduce more notation related to the 
representation of piecewise polynomial functions: 
Let  $\ell_{n1},\dotsc, \ell_{nq}\in \P_{q-1}$ be the Lagrange polynomials $\ell_1,\dotsc,  \ell_q\in \P_{q-1}$
for the collocation nodes $c_1,\dotsc,c_q$ 
shifted to  the interval $[t_n,t_{n+1}], \ell_{ni}(t)=\ell_{ni}(t_n+k_n\tau) = \ell_i(\tau),  i=1,\dotsc,q.$ 
Obviously, 
\[ \ell_{ni}(t_{nj})=\delta_{ij},\quad i,j=1,\dotsc,q.\]  
Furthermore, we let $0=\tilde c_0<\dotsb< \tilde c_q=1$ be auxiliary points, such that   $\tilde c_0 =0$ and $\tilde c_q=1,$  and  let 
${\tilde \ell}_{n0},{\tilde \ell}_{n1},\dotsc, {\tilde  \ell}_{nq}\in \P_q$ be the Lagrange polynomials 
\[{\tilde \ell}_0,{\tilde \ell}_1\dotsc,  {\tilde \ell}_q\in \P_q\]
 for the points $\tilde c_0,\dotsc,\tilde c_q$ 
shifted to  the interval $[t_n,t_{n+1}], {\tilde \ell}_{ni}(t)={\tilde \ell}_{ni}(t_n+k_n\tau) = {\tilde\ell}_i(\tau),  i=0,\dotsc,q.$ 
Obviously, ${\tilde \ell}_{ni}(\tilde t_{nj})=\delta_{ij}, i,j=0,1,\dotsc,q,$ with $\tilde t_{nj}:=t_n+k_n\tilde c_j,  j=0,\dotsc,q.$ 
With this notation, let the \emph{interpolant}
   $\widehat v$ 
 of a time dependent function  $v$  be the piecewise polynomial function approximating  $v$ defined by  
interpolating   the nodal values $v(\tilde t_{nj})$ of $v$ as follows
\begin{equation}
\label{eq:Lagr6}
\widehat v (t)=\widehat I_{q} v (t)= \sum_{i=0}^q {\tilde \ell}_{ni}(t) v(\tilde t_{ni}),\quad 
\overline v (t) = I_{q-1}\widehat v (t)=\sum_{j=1}^q \ell_{nj}(t) \widehat v(t_{nj}),
\quad t\in (t_n,t_{n+1}].
\end{equation}
%
Notice that in principle $ \tilde t_{nj} $ could be different from $ t_{nj},$ and we can choose them at our convenience.
 The reason of introducing $ \tilde t_{nj}$ is that we would like to interpolate  
neural network functions $v $ on the space of \emph{continuous} piecewise polynomial functions and 
apply afterwards the collocation residual to $\widehat v.$
In some important cases $ \tilde t_{nj} $ can be chosen as  extensions of the collocation points $t_{nj} $ 
by including an additional node; see Remark \ref{assumptions_Pi}.



We are ready to define the  Runge--Kutta discrete loss by 
\begin{equation}\label{RK_ParabFunctional}
\begin{split}
	\mathcal{G}_{RK} (v) &= \| \widehat v_t(t)+I_{q-1} L\widehat v(t)-I_{q-1} f(t) \|^2_{L^2((0,T); L^2(\varOmega))}  
	+ |    \widehat v(0)  - u_0 |_{H^1(\varOmega)}^2 \\
&= \int_0^T \| \widehat v_t (t)+I_{q-1} L\widehat v(t)-I_{q-1} f(t)\|^2_{L^2(\varOmega)}\, \d t  + |   \widehat v(0)  - u_0 |_{H^1(\varOmega)}^2 .
\end{split}
\end{equation}
The Runge--Kutta Physics Informed Neural Network method is based on the discrete minimization problem for the loss $\mathcal{G}_{RK} (v), $
\begin{equation}\label{ieE-minimize_NNQ}
	\min  _ {v \in  V _{\mathcal{N}, 0 }} \mathcal{G}_{RK} (v) . 
\end{equation}
The formulation of the method for nonlinear evolution problems is straightforward by using similar interpolation operators; see \cite{AMN2}.

\subsubsection{A general discrete Loss}
\label{SSe:2.4.3} 
The Runge--Kutta time discretizations as well as
continuous and discontinuous time-Galerkin sche\-mes can be cast into a unified formulation showing 
that the proposed time discrete training can be quite general. This formulation  shall be used in the analysis of Sections \ref{Se:3} and \ref{Se:4}. 
We consider   generalized interpolation operators as follows: 
For $q\in \N,$ we consider  projection or interpolation operators $\varPi_{q-1} ,  {\widetilde \varPi}_{q-1} $  to the piecewise 
polynomial functions of local degree $q-1.$ 
Furthermore, we consider a projection or interpolation operator 
\[\widehat v = {\widehat \varPi}_{q}v  
\]
 of a time dependent function  $v$  to be the piecewise polynomial function of local degree $q$ approximating   
 $v.$  We assume that if $v$ is continuous in time, then $\widehat v  $ 
is a piecewise polynomial globally continuous in time function. The standard interpolation operators as defined 
in Section \ref{SSe:2.4.2} are typical cases of these operators. 
%
%
%
We are ready to define the  time-discrete  loss by 
\begin{equation}\label{RK_ParabFunctional}
	\mathcal{G}_{k} (v) = \int_0^T  \| \widehat v_t(t)+\varPi_{q-1}  L\widehat v(t)-{\widetilde \varPi}_{q-1}f(t) \|^2_{L^2(\varOmega)}\, \d t  
	+ |   \widehat v(0)  - u_0 |_{H^1(\varOmega)}^2 .\\
\end{equation}
In Section    \ref{Se:4} we show systematically that all Runge--Kutta methods considered and both continuous and discontinuous 
Galerkin time discrete schemes are associated to the 
general formulation involving the discrete operators $\varPi_{q-1} ,  {\widetilde \varPi}_{q-1} $ and $ {\widehat \varPi}_{q}.$

The generalized Runge--Kutta Physics Informed Neural Network method is based on the discrete minimization problem for the loss $\mathcal{G}_{k} (v), $
\begin{equation}\label{ieE-minimize_general}
	\min  _ {v \in  V _{\mathcal{N}, 0 }} \mathcal{G}_{k} (v) . 
\end{equation}
Under certain assumptions on the generalized operators (see Proposition \ref{TD:EquicoercivityofG}, \eqref{exact_integrals}, and \eqref{eq:Lagr6}), 
we conduct the stability and convergence analysis for \eqref{ieE-minimize_general} in Section \ref{Se:3}. These assumptions are shown 
to be satisfied by all the methods discussed in this work, as demonstrated in Section \ref{Se:4}.

\section{Stability and Convergence for Parabolic equations}\label{Se:3}

Let as before $ \varOmega \subset \mathbb{R}^d $ be open and bounded, and set $ \varOmega_T = \varOmega \times (0,T] $ for some fixed time $ T>0. $
We consider the parabolic problem
\begin{equation}\label{ParabolicPDE}
\left\{
\begin{alignedat}{3}
&u_t + Lu = f  \qquad&&\text{in }  &&\varOmega_T, \\
& u =  0 \qquad&&\text{on }  &&\partial \varOmega \times (0,T] , \\
&  u =  u_0\qquad&&\textrm{in }  &&\varOmega \times \lbrace t=0 \rbrace .
\end{alignedat}
\right.
\end{equation}
 In  this section we discuss  convergence properties of approximations of \eqref{ParabolicPDE} obtained by minimization 
of continuous and time-discrete energy functionals over appropriate sets of  neural network functions. We shall assume 
that $\varOmega$ is a convex Lipschitz domain. 
This assumption is made to ensure that elliptic regularity estimates are valid. The case of a non-convex domain can 
be treated with the appropriate modifications in the analysis. 

The continuous functional can be defined as follows: Consider  
\[ \mathcal{G} : H^1((0,T) ; L^2 (\varOmega)) \cap L^2((0,T) ; H^2(\varOmega) \cap H^1_0(\varOmega))\rightarrow \overline{\mathbb{R}} \]
 such that 
\begin{equation}\label{ParabFunctional}
\mathcal{G}(v) =
\int_0^T \| v_t(t) + L v(t) - f(t) \|^2_{L^2(\varOmega)} \d t +  | v(0) - u_0  |^2_{H^1(\varOmega)}\, .
\end{equation}
The use of the ${H^1(\varOmega)}$ seminorm for the initial condition is more appropriate for stability purposes for  parabolic equations. 
While weaker choices are certainly possible, they would require a modified technical analysis.

\subsection{Time discrete training through Runge--Kutta and time Galerkin}

%
%
We shall work with the general    time-discrete 
loss defined in Section \ref{SSe:2.4.3}: 
\begin{equation*}\label{RK_ParabFunctional}
	\mathcal{G}_{k} (v) = \int_0^T  \| \widehat v_t(t)+\varPi_{q-1}  L\widehat v(t)-{\widetilde \varPi}_{q-1}f(t) \|^2_{L^2(\varOmega)}\, \d t  
	+ |   \widehat v(0)  - u_0 |_{H^1(\varOmega)}^2 .\\
\end{equation*}
In the sequel, we shall use the compact notation 
\begin{equation}\label{def_overU}
\widehat U = {\widehat \varPi}_{q}U,\quad    \overline U:= \varPi_{q-1} \widehat U, \quad \text{and} \quad \widetilde  f = {\widetilde \varPi}_{q-1}f.
\end{equation}

The neural network spaces are selected to meet specific approximability criteria aligned with established results in approximation theory; 
see, e.g.,  \cite{Grohs_space_time_approx:2022, Mishra:appr:rough:2022, Xu} and their references.  However, subtle challenges arise, 
particularly with boundary conditions and the fact that existing approximation results 
do not yet offer concrete guidance on selecting specific architectures. Nevertheless, to investigate the potential convergence of minimizers, 
we assume that the following approximability requirements are satisfied.
The required smoothness of the spaces is guaranteed by selecting smooth enough activation functions. The neural network spaces are 
selected such that for each 
  $\ell \in \mathbb N$ we associate  a  space  $ V _{\mathcal{N}, 0}  ,$
  which is denoted by  $V_\ell $ with the approximation property: For each $w\in H^1((0,T); L^2(\varOmega))\cap L^2((0,T); H^2(\varOmega) \cap H^1_0(\varOmega)) $ 
  there exists a $w_\ell \in V_\ell$ such that
    \begin{equation}\label{w_ell_7_TD}
 \|w_{\ell}-w\|_{H^1((0,T); L^2(\varOmega))\cap L^2((0,T); H^2(\varOmega))}  \leqslant  \beta _\ell \, (w),
  \qquad 
 \text{and } \ \beta _\ell \, (w) \to 0,  \  \ \ell\to \infty\, .	
\end{equation}
If in addition, $w $ has higher regularity, we assume that   
\begin{equation}\label{w_ell_7_hSm_TD}
 \|(w_{\ell}-w)' \|_{H^1((0,T); L^2(\varOmega))\cap L^2((0,T); H^2(\varOmega))}   \leqslant  \tilde \beta _\ell \, \| w'\| _{ H^m((0,T); H^2(\varOmega))}  , \qquad 
 \text{and } \ \tilde \beta _\ell \, \to 0,  \  \ \ell\to \infty\, ,	
 \end{equation}
 where in the above relation and throughout this section, the time derivative is denoted by  $w',$ i.e., $w' :=w_t\, .$ 

\begin{remark}\label{Rmk:NNapproximation}
 The current state of the art in approximating smooth functions using neural network spaces lacks sufficient information regarding the specific 
 architectures necessary to achieve particular bounds and rates. The above  assumptions can be relaxed by requiring that \eqref{w_ell_7_TD} 
 and \eqref{w_ell_7_hSm_TD} hold only for $w=u,$ where $u$ represents the exact solution of the problem.
\end{remark}


With the spaces $V_\ell$ defined above, we shall use the following notation for the discrete energies:
\begin{equation}\label{GdeltaEnergies}
\mathcal{G}_{\ell}(v_{\ell}) = 
\left\{
\begin{alignedat}{2} 
&  \mathcal{G}_{k(\ell)} (v_\ell) ,\quad && v_\ell \in V_\ell, 
\\ 
&+ \infty ,\quad && \textrm{otherwise.}
\end{alignedat}
\right.
\end{equation}
Here $k=k(\ell)$ are selected just to satisfy  $k=k(\ell)\to 0 $
as $\ell\to \infty\, , $ and $\mathcal{G}_{k(\ell)} (v_\ell) =\mathcal{G}_{k} (v_\ell)$ is defined by \eqref{RK_ParabFunctional}.
%
%

\subsubsection{Stability-Maximal Regularity}\label{Se:3.1.1}

Following \cite{GGM_pinn_2023}, we call our methods \emph{stable} if 
two key properties,  roughly stated as follows, hold 
\begin{enumerate}
	\item [{[S1]}]  If the energies  $\mathcal{E}_\ell$ are uniformly bounded
\[\mathcal{E}_\ell [u_\ell] \leqslant C,\]  
then  there exists a constant $C_1>0$ and $\ell$-dependent norms $V_\ell$ such that 
\begin{equation}
\|u_\ell\|_{V_\ell} \leqslant C_1 . \label{coer:dg_seminorm}
\end{equation}
\item [{[S2]}]  Uniformly bounded sequences in $\|\cdot \| _{V_\ell}$ have convergent subsequences in $H.$
\end{enumerate}
Here, 
$H$ is a normed space (typically a Sobolev space) that depends on the form of the discrete energy being 
considered. Property [S1] requires that 
$ \mathcal{E}_\ell [v_\ell] $  is coercive with respect to (potentially 
$\ell$-dependent) norms or semi-norms. Moreover, [S2] implies that, although the norms 
$\|\cdot \| _{V_\ell}$ are 
$\ell$-dependent, they should allow the extraction of convergent subsequences from uniformly bounded sequences 
in these norms, in a weaker topology induced by the space $H.$

This definition is inspired by a discrete interpretation of the Equi-Coercivity property in the 
$\varGamma$-convergence  framework in the calculus of variations. As we will demonstrate, this property is fundamental 
to establishing compactness and the convergence of minimizers for the approximate functionals, as shown later in this section.

The stability of $ \mathcal{G}_{k} $ follows by the next result which hinges on the maximal regularity estimates established in Section \ref{Se:4}.

\begin{proposition}\label{TD:EquicoercivityofG} Assume that the following maximal regularity estimate 
is satisfied
\begin{equation}
\label{eq:MR_Se:3}
\|\overline U\|_{L^2((0,T); H^2(\varOmega))} + \|\widehat U'\|_{L^2((0,T); L^2(\varOmega))} \leqslant C\Big [ \| \widehat U(0)\|_{H^1(\varOmega)}
+\| \widehat U_t + L \overline U \|_{L^2((0,T); L^2(\varOmega))}\Big ];
\end{equation} 
then, the functional $ \mathcal{G}_{k}$ defined in \eqref{RK_ParabFunctional} is stable with respect to $ \widehat U , \overline U $ in the following sense: 
\begin{equation}\label{GEquicoercivity_TD}
\begin{gathered}
\textrm{If} \;\; \mathcal{G}_k(U) \leqslant C \;\; \textrm{for some} \;\: C >0 \;,\; \textrm{we have} \\
\|\overline U\|_{L^2((0,T); H^2(\varOmega))} + \|\widehat U'\|_{L^2((0,T); L^2(\varOmega))} \leqslant C_{MR}.
\end{gathered}
\end{equation}
\end{proposition}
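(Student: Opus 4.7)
The plan is to read off the two ingredients of the maximal regularity estimate \eqref{eq:MR_Se:3} directly from the bound $\mathcal{G}_k(U) \leqslant C$, and then invoke \eqref{eq:MR_Se:3} as a black box. Concretely, I would decompose the two summands of $\mathcal{G}_k(U)$ and analyze each.

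First, from the boundary term in \eqref{RK_ParabFunctional} we immediately get
$|\widehat U(0) - u_0|_{H^1(\varOmega)}^2 \leqslant C$, hence by the triangle inequality
\[
|\widehat U(0)|_{H^1(\varOmega)} \leqslant \sqrt{C} + |u_0|_{H^1(\varOmega)} =: K_0.
\]
Next, I would observe the key commutation: since $\varPi_{q-1}$ acts only in the time variable while $L$ is a differential operator in space, $\varPi_{q-1}L\widehat U = L\varPi_{q-1}\widehat U = L\overline U$ by definition \eqref{def_overU}. Consequently the integrand in $\mathcal{G}_k$ can be rewritten as $\widehat U_t + L\overline U - \widetilde f$, and the bound $\mathcal{G}_k(U)\leqslant C$ yields
\[
\|\widehat U_t + L\overline U - \widetilde f\|_{L^2((0,T);L^2(\varOmega))} \leqslant \sqrt{C}.
\]
Another application of the triangle inequality gives
\[
\|\widehat U_t + L\overline U\|_{L^2((0,T);L^2(\varOmega))} \leqslant \sqrt{C} + \|\widetilde\varPi_{q-1}f\|_{L^2((0,T);L^2(\varOmega))} \leqslant \sqrt{C} + C_\varPi\|f\|_{L^2(\varOmega_T)} =: K_1,
\]
where in the last step I used the $L^2$-stability of the interpolation/projection operator $\widetilde\varPi_{q-1}$ in time (an elementary property of the Lagrange/projection operators used throughout Section \ref{Se:4}; for the $L^2$-projection it is trivial, and for interpolation at fixed Gauss/Radau/Lobatto nodes it follows from a standard scaling argument on the reference interval).

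Finally, I would feed $K_0$ and $K_1$ into the maximal regularity estimate \eqref{eq:MR_Se:3} to obtain
\[
\|\overline U\|_{L^2((0,T);H^2(\varOmega))} + \|\widehat U'\|_{L^2((0,T);L^2(\varOmega))} \leqslant C(K_0 + K_1) =: C_{MR},
\]
which is the desired conclusion. In this argument the only point that is not entirely cosmetic is the $L^2$-stability of $\widetilde\varPi_{q-1}$ on each subinterval $J_n$, uniform in $k_n$; I expect this to be the main, though still minor, obstacle, and would resolve it by a change of variable to $[0,1]$ where the interpolation is a bounded linear map on a finite-dimensional space and hence automatically bounded. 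Everything else in the proof is just the triangle inequality applied to the two terms of $\mathcal{G}_k(U)$, together with the commutation of the time-projection with the spatial operator $L$.
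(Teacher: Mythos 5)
Your proof is correct and follows essentially the same route as the paper's: read off the bound on $|\widehat U(0)|_{H^1(\varOmega)}$ and on $\|\widehat U_t + L\overline U\|_{L^2((0,T);L^2(\varOmega))}$ from the two terms of $\mathcal{G}_k(U)$ via the triangle inequality, then invoke the assumed maximal regularity estimate \eqref{eq:MR_Se:3}. The only difference is that you make explicit two points the paper leaves implicit, namely the commutation $\varPi_{q-1}L\widehat U = L\overline U$ and the boundedness of $\|\widetilde\varPi_{q-1}f\|_{L^2((0,T);L^2(\varOmega))}$ (for genuine interpolation operators the latter needs $f$ continuous in time with values in $L^2(\varOmega)$, not merely $f\in L^2(\varOmega_T)$, so that the nodal values exist and the scaled bound $\|\widetilde\varPi_{q-1}f\|_{L^2(J_n)}^2\lesssim k_n\max_i\|f(t_{ni})\|^2$ applies).
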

$ \\ $

\begin{proof}
Since 
\begin{equation}\label{Proofof G EquicoercEq1_1} 
\int_0^T  \| \widehat U_t + L \overline U -{\widetilde \varPi}_{q-1}f(t) \|^2_{L^2(\varOmega)}\, \d t\,\leqslant C, 
\end{equation}
 we have 
\begin{equation}\label{Proofof G EquicoercEq4}
\| \widehat U_t + L \overline U \|_{L^2((0,T); L^2(\varOmega))} \leqslant C_1 . 
\end{equation}
Therefore, in view of \eqref{eq:MR_Se:3} and the fact that 
\[\| \widehat U(0)\|_{H^1(\varOmega)} \leqslant C \]
as a result of $\mathcal{G}_k(U) \leqslant C,$
we conclude the proof.
\end{proof}

\subsection{Convergence of the minimizers}
 Next, we shall prove that    the sequence of discrete minimizers $ (u_\ell) $ converges in $L^2((0,T); H^1(\varOmega))$ to
  the    minimizer of the continuous problem.  
 We first show a $\liminf$  and a $\limsup$ inequality. 
 
 \begin{lemma}[{$\liminf$ inequality}]\label{TD:liminf}
Assume that a sequence  $\{U_\ell \},$ $ U_\ell \in V_\ell $, satisfies 
\[ \mathcal{G}_{ \ell} (U_\ell) \leqslant C \]
uniformly in $ \ell .$
Let  the operators $\varPi_{q-1} ,  {\widehat \varPi}_{q} $ 
satisfy
\begin{equation}\label{exact_integrals}
\int _{J_n} 	{\widehat \varPi}_{q}v \, \d t  = \int _{J_n} \varPi_{q-1} {\widehat \varPi}_{q} v \, \d t \, , 
\end{equation} 
for all $v \in V _\ell. $
Assume further that there exists 
a $\tilde u\in H^1((0,T); L^2(\varOmega))\cap L^2((0,T); H^2(\varOmega))$ such that 
\[ U_\ell \to \tilde u, \quad \ell \to \infty, \quad \text { in } L^2((0,T); H^1(\varOmega))\, ;\]
then, 
\begin{equation}\label{Glowersemicontinuity}
\mathcal{G}(\tilde u) \leqslant \liminf_{\ell \rightarrow \infty} \mathcal{G}_ {\ell} (U_\ell). 
\end{equation}
\end{lemma}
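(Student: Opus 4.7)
The plan is to combine weak compactness obtained from Proposition \ref{TD:EquicoercivityofG} via maximal regularity with the weak lower semicontinuity of the convex $L^2$-squared functional, being careful to identify the various weak limits with $\tilde u$ and $\tilde u'$ by means of the exact-integrals hypothesis \eqref{exact_integrals}.

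First, since $\mathcal{G}_\ell(U_\ell)\leqslant C$, Proposition \ref{TD:EquicoercivityofG} provides the uniform bounds $\|\overline U_\ell\|_{L^2((0,T);H^2(\varOmega))}+\|\widehat U_\ell'\|_{L^2((0,T);L^2(\varOmega))}\leqslant C_{MR}$, and the initial-data term yields $\|\widehat U_\ell(0)\|_{H^1(\varOmega)}\leqslant \|u_0\|_{H^1(\varOmega)}+\sqrt{C}$. Writing $\widehat U_\ell(t)=\widehat U_\ell(0)+\int_0^t \widehat U_\ell'(s)\, \d s$, this also gives a uniform bound for $\widehat U_\ell$ in $H^1((0,T);L^2(\varOmega))\cap C([0,T];L^2(\varOmega))$. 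Extracting a non-relabeled subsequence, we may assume
\[\overline U_\ell \rightharpoonup \bar u \; \text{ in } L^2((0,T);H^2(\varOmega)), \qquad \widehat U_\ell \rightharpoonup \hat u \; \text{ in } H^1((0,T);L^2(\varOmega)),\]
and $\widehat U_\ell(0) \rightharpoonup \hat u(0)$ weakly in $H^1(\varOmega)$ (the $H^1(\varOmega)$-boundedness of traces upgrades the $L^2(\varOmega)$-weak trace convergence to the $H^1(\varOmega)$-weak level).

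The crucial identification step rests on \eqref{exact_integrals}. For any smooth test $\phi \in C_c^\infty((0,T);L^2(\varOmega))$, letting $\bar\phi_n$ denote the $J_n$-mean of $\phi$, the zero-mean property $\int_{J_n}(\widehat U_\ell-\overline U_\ell)\, \d t = 0$ yields
\[\Bigl|\int_0^T \langle \widehat U_\ell-\overline U_\ell,\phi\rangle\, \d t\Bigr| = \Bigl|\sum_n \int_{J_n} \langle \widehat U_\ell-\overline U_\ell,\phi-\bar\phi_n\rangle\, \d t \Bigr| \leqslant C k\,\|\widehat U_\ell-\overline U_\ell\|_{L^2(L^2)}\,\|\phi_t\|_{L^2(L^2)}.\]
Since the right-hand side tends to zero, $\widehat U_\ell$ and $\overline U_\ell$ share the same weak limit in $L^2((0,T);L^2(\varOmega))$, so $\hat u=\bar u$. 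To connect this with $\tilde u$, the strong convergence $U_\ell\to\tilde u$ in $L^2((0,T);H^1(\varOmega))$ is combined with the interpolation consistency $\|\widehat\varPi_q U_\ell - U_\ell\|_{L^2(L^2)} \to 0$, obtained from the structure of $\widehat\varPi_q$ on $V_\ell$ (interpolation at the nodes $\tilde t_{nj}$) together with the uniform $H^1(L^2)$-bound on $\widehat U_\ell$; this gives $\hat u=\bar u=\tilde u$, and hence by uniqueness of distributional derivatives, $\widehat U_\ell' \rightharpoonup \tilde u'$ in $L^2((0,T);L^2(\varOmega))$. The term $\widetilde\varPi_{q-1} f$ converges strongly to $f$ in $L^2((0,T);L^2(\varOmega))$ by a standard density argument.

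With these weak limits in hand, I conclude by weak lower semicontinuity: the convex map $w\mapsto \int_0^T\|w\|^2_{L^2(\varOmega)}\, \d t$ on $L^2((0,T);L^2(\varOmega))$ is weakly lower semicontinuous, so the weak convergence $\widehat U_\ell'+L\overline U_\ell-\widetilde\varPi_{q-1}f \rightharpoonup \tilde u'+L\tilde u - f$ yields the volume-term inequality, and the weak convergence $\widehat U_\ell(0)\rightharpoonup \tilde u(0)$ in $H^1(\varOmega)$ yields the initial-data one. Adding both gives $\mathcal{G}(\tilde u)\leqslant \liminf_{\ell\to\infty}\mathcal{G}_\ell(U_\ell)$. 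I expect the main obstacle to be the identification $\hat u=\tilde u$: the exact-integrals condition cleanly relates $\widehat U_\ell$ to $\overline U_\ell$, but closing the loop to $U_\ell$ and thus to $\tilde u$ requires structural properties of $\widehat\varPi_q$ (acting on the neural-network family $V_\ell$) that are delicate because $U_\ell$ itself depends on $\ell$ and only a strong $L^2(H^1)$-bound is available from the hypothesis.
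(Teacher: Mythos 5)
Your proof follows essentially the same route as the paper's: the maximal-regularity stability bound of Proposition \ref{TD:EquicoercivityofG} gives weak compactness of $\widehat U_\ell'$ and $L\overline U_\ell$, the exact-integrals condition \eqref{exact_integrals} (your cell-mean subtraction against $\phi$ is just the dual form of the paper's test against $\varphi'-I_0\varphi'$) identifies the two weak limits with each other and with $\tilde u$, and convexity of the squared $L^2$-norm gives weak lower semicontinuity. The identification of the common weak limit with $\tilde u$ --- which you rightly flag as the delicate point, since only $U_\ell\to\tilde u$ strongly in $L^2((0,T);H^1(\varOmega))$ is assumed --- is treated no more rigorously in the paper (which simply asserts $u_{(1)}=u_{(2)}=\tilde u$), and your explicit treatment of the initial-data term via weak $H^1(\varOmega)$ convergence of the traces is a small completion the paper omits.
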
 
\begin{proof}
From the  stability estimate, Proposition \ref{TD:EquicoercivityofG}, and the assumption on the boundedness 
of $ \mathcal{G}_{\ell} (U_\ell)  $  we conclude   that $\|\overline U_\ell\|_{L^2((0,T); H^2(\varOmega))} + \|\widehat U_\ell'\|_{L^2((0,T); L^2(\varOmega))} \leqslant C_1$
 are uniformly bounded. By the relative compactness in $L^2((0,T); L^2(\varOmega))$ we have (up to a subsequence not re-labeled)  the existence of $u_{(1)}$ 
 and $u_{(2)}$ such that 
\begin{equation}\label{proofGlowersemicontinuTDiscr}
L \overline  U_\ell \rightharpoonup L u_{(1)}\;\: \textrm{and} \;\: \widehat U_\ell' \rightharpoonup u_{(2)} '\;\: \textrm{weakly in} \;\: L^2((0,T); L^2(\varOmega))\, .
\end{equation}
Fix a  space-time test function $\varphi\in C^\infty _0 ,$ 
and  let $I_0$ be an appropriate interpolant into the piecewise constants in time functions. 
Then, \eqref{exact_integrals} implies 
\begin{equation}
	\begin{split}
  \Big | \int_0^T  \<  \widehat U _\ell  , \varphi ' \>   \d t     -& \int_0^T  \<  \overline U _\ell  , \varphi ' \>   \d t  \Big | \leqslant
 \Big | \int_0^T  \<  \widehat U _\ell -   \overline U _\ell , \varphi ' - I _0  \varphi ' \>   \d t       \Big | + 
 \Big | \int_0^T  \<  \widehat U _\ell -   \overline U _\ell ,   I _0  \varphi ' \>   \d t       \Big | \\
 &=\Big | \int_0^T  \<  \widehat U _\ell -   \overline U _\ell , \varphi ' - I _0  \varphi ' \>   \d t       \Big |.
	\end{split}
\end{equation}
By the uniform bound   $\|\overline U_\ell\|_{L^2((0,T); H^2(\varOmega))} + \|\widehat U_\ell\|_{L^2(0,T; H^2(\varOmega))} \leqslant C_1\, ,$ we infer that 
\begin{equation}
 \int_0^T  \<  \widehat U _\ell  , \varphi ' \>\,   \d t     - \int_0^T  \<  \overline U _\ell  , \varphi ' \>\,   \d t  \to 0, 
 \qquad \ell \to \infty\, , 
\end{equation}
and 
\begin{equation}
	\begin{split}
 &\int_0^T  \<  L\widehat U _\ell  , \varphi ' \>\,   \d t     - \int_0^T  \< L \overline U _\ell  , \varphi ' \>\,   \d t  \to 0, 
 \qquad \ell \to \infty\, . 
	\end{split}
\end{equation}
We can conclude, therefore, that  $u_{(1)} = u_{(2)} = \tilde u$, and thus  
\begin{equation}\label{proofGlowersemicontinuTDiscr2}
\begin{gathered}
  \widehat U_\ell' + L \overline  U_\ell -  \overline  f \rightharpoonup \tilde u' + L  \tilde u - f , 
 \qquad \ell \to \infty\, . 
\end{gathered}
\end{equation}
The convexity of $ \int_{\varOmega_T} | \cdot |^2 $ implies weak lower semicontinuity, that is,
\begin{equation}\label{proofGlowersemicontinuityeq2}
\int_{\varOmega_T} | \tilde u' + L \tilde u -f |^2 \, \d x\d t \leqslant \liminf_{\ell \rightarrow \infty}  \int_{\varOmega_T} |  \widehat U_\ell' + L \overline  U_\ell -  \overline  f|^2\, \d x\d t
\end{equation}
and therefore  the proof is complete. 
\end{proof}

\begin{lemma}[{$\limsup$ inequality}]\label{TD:limsup} We assume that the operator $\widehat \varPi _q$ can be represented as  
\begin{equation}
\label{eq:Lagr6}
\widehat  \varPi _q  v (t)=\sum_{i=0}^q {\tilde \ell}_{ni}(t) v(\tilde t_{ni}),
\quad t\in (t_n,t_{n+1}];
\end{equation}
see \eqref {eq:Lagr6}.  Let $ w \in H^1((0,T); L^2(\varOmega))\cap L^2((0,T); H^2(\varOmega)\cap H^1_0 (\varOmega)\, ) . $ Then, there exists a recovery 
 sequence   $\{w_\ell \},$ $ w_\ell \in V_\ell, $ such that $w_\ell \to w$ and 
\[ \mathcal{G}(w) = \lim_{\ell \rightarrow \infty} \mathcal{G}_{ \ell} (w_\ell) .\]
\end{lemma}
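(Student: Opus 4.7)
The plan is to construct the recovery sequence directly as a neural-network approximation of $w$ itself, and verify term-by-term that the discrete loss converges to the continuous one. First I would apply the approximability assumptions \eqref{w_ell_7_TD} and \eqref{w_ell_7_hSm_TD} (after a preliminary density step replacing $w$ by a smoother proxy $w_\eps$ and extracting a diagonal subsequence) to produce $w_\ell \in V_\ell$ with
\begin{equation*}
 \|w_\ell - w\|_{H^1((0,T); L^2(\varOmega))\cap L^2((0,T); H^2(\varOmega))} \to 0, \qquad w_\ell(0)\to w(0) \ \text{in } H^1(\varOmega),
\end{equation*}
and with $\|w_\ell^{(q+1)}\|_{L^2((0,T); L^2(\varOmega))}$ uniformly bounded (this last piece is inherited from \eqref{w_ell_7_hSm_TD} applied to the smoothed $w_\eps$). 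The convergence in $L^2((0,T); H^1(\varOmega))$ is an immediate by-product, which is precisely what is meant by $w_\ell\to w$ in the statement.

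Next I would split the discrete residual $R_\ell := \widehat w_{\ell,t} + \varPi_{q-1} L \widehat w_\ell - \widetilde\varPi_{q-1} f$ against the continuous residual $R := w_t + Lw - f$ as
\begin{equation*}
 R_\ell - R = (\widehat w_{\ell,t} - w_t) + \varPi_{q-1}(L\widehat w_\ell - Lw) + (\varPi_{q-1} Lw - Lw) + (f - \widetilde\varPi_{q-1} f),
\end{equation*}
and show each of the four error terms tends to zero in $L^2((0,T); L^2(\varOmega))$. For the first, I would write $\widehat w_{\ell,t} - w_t = \bigl((\widehat \varPi_q - I) w_\ell\bigr)' + (w_{\ell,t} - w_t)$; the second summand vanishes by construction of $w_\ell$, while the first is controlled on each $J_n$ by the standard polynomial-interpolation estimate of the form $\|(\widehat \varPi_q v - v)'\|_{L^2(J_n; L^2(\varOmega))} \leqslant C k_n^{q}\|v^{(q+1)}\|_{L^2(J_n; L^2(\varOmega))}$, which after summation is $O(k^q)$ times a derivative norm of $w_\ell$ that is uniformly bounded by the previous step. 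The two fixed-data terms $\varPi_{q-1} Lw - Lw$ and $f - \widetilde\varPi_{q-1} f$ decay as $k\to 0$ by standard approximation properties of $\varPi_{q-1}$ and $\widetilde\varPi_{q-1}$ applied to the smooth functions $Lw$ and $f$ (again invoking density if $f$ is only in $L^2$). The remaining mixed term is bounded using $L^2$-stability of $\varPi_{q-1}$ by a multiple of $\|L\widehat w_\ell - Lw\|_{L^2((0,T); L^2(\varOmega))}$, which decays because $\widehat w_\ell \to w$ in $L^2((0,T); H^2(\varOmega))$ by the same interpolation-error argument applied without taking a derivative in time.

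The initial-condition piece collapses nicely: by the representation \eqref{eq:Lagr6} and the convention $\tilde c_0 = 0$, one has $\widehat w_\ell(0) = w_\ell(0)$, so $|\widehat w_\ell(0) - u_0|_{H^1(\varOmega)} \to |w(0) - u_0|_{H^1(\varOmega)}$ directly from the construction of $w_\ell$. I expect the main obstacle to be the simultaneous control of two competing scales in the first error term: we need the neural-network error $\|w_\ell - w\|$ small while also preventing $\|w_\ell^{(q+1)}\|_{L^2((0,T); L^2(\varOmega))}$ from blowing up too quickly, so that the $O(k^q)$ interpolation error remains negligible. This is exactly where the diagonal argument, combining smoothing of $w$ with the higher-regularity bound \eqref{w_ell_7_hSm_TD}, is essential. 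Once the four pieces are assembled, squared, integrated, and combined with the initial-condition contribution, the convergence $\mathcal{G}_\ell(w_\ell) \to \mathcal{G}(w)$ follows (as a two-sided equality, the $\liminf$ being automatic from Lemma \ref{TD:liminf}).
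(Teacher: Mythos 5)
Your overall architecture (mollify $w$ to a smooth proxy, approximate that by a network via \eqref{w_ell_7_TD}--\eqref{w_ell_7_hSm_TD}, then diagonalize over the two parameters) matches the paper's, and your treatment of the initial condition and of the fixed-data terms is in order. The genuine gap is in how you control the time-interpolation error. You bound $\bigl((\widehat \varPi_q - I) w_\ell\bigr)'$ by the estimate $\|(\widehat \varPi_q v - v)'\|_{L^2(J_n; L^2(\varOmega))} \leqslant C k_n^{q}\|v^{(q+1)}\|_{L^2(J_n; L^2(\varOmega))}$ applied to $v = w_\ell$, and you assert that $\|w_\ell^{(q+1)}\|_{L^2((0,T); L^2(\varOmega))}$ is uniformly bounded, ``inherited from \eqref{w_ell_7_hSm_TD}.'' It is not: \eqref{w_ell_7_hSm_TD} controls only $(w_\ell - w)'$ in $H^1((0,T);L^2(\varOmega))\cap L^2((0,T);H^2(\varOmega))$, i.e.\ at most two time derivatives of the \emph{error}, and \eqref{w_ell_7_TD} gives nothing beyond one. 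For $q\geqslant 2$ the hypotheses provide no control whatsoever on $w_\ell^{(q+1)}$ --- a network can satisfy the stated approximation bounds while oscillating arbitrarily in its higher derivatives --- so the $O(k^q)$ term in your argument cannot be closed. Even for $q=1$, $\|w_\ell''\|$ is only bounded by $\tilde\beta_\ell\,\|w_\eps'\|_{H^m} + \|w_\eps''\|$, which blows up as the smoothing parameter shrinks rather than being uniformly bounded.

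The paper's proof is structured precisely to avoid applying any consistency estimate to the network function. It splits the residual into three pieces: the discrete operators acting on the difference $\theta_\ell = w_{\delta,\ell}-w_\delta$, the consistency error of $\widehat\varPi_q,\varPi_{q-1}$ on the mollified $w_\delta$ alone (which is $C^\infty$, so high derivatives are available at the price of negative powers of $\delta$, and only a first-order $O(k)$ consistency bound is needed there), and the mollification error. The key ingredient for the first piece is an $L^2$-\emph{stability} bound $\|(\widehat\varPi_q\theta)'\|_{L^2((0,T);L^2(\varOmega))}\leqslant C\|\theta'\|_{L^2((0,T);L^2(\varOmega))}$, proved from the representation \eqref{eq:Lagr6} via the partition-of-unity identity $\sum_i \tilde\ell_{ni}'\equiv 0$ and the fundamental theorem of calculus; this needs only one time derivative of $\theta_\ell$, exactly what \eqref{w_ell_7_TD} and \eqref{w_ell_7_hSm_TD} supply. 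Your argument could be repaired along these lines (or by downgrading your interpolation estimate to first order and routing the high-derivative burden onto $w_\eps$ rather than $w_\ell$), but as written the claim of a uniformly bounded $(q+1)$-st derivative of the recovery sequence is unsupported. A smaller slip: $w_{\ell,t}-w_t$ does not ``vanish by construction,'' it only tends to zero.
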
 
\begin{proof}
 For $ w \in H^1((0,T); L^2(\varOmega))\cap L^2((0,T); H^2(\varOmega)\cap H^1_0 (\varOmega)\, ) $, we choose a smooth approximation 
   $(w_\delta) \subset C^\infty([0,T];  H^2(\varOmega)\cap H^1_0 (\varOmega)\, )$
    such that 
\begin{equation}
\begin{split}\label{w_delta_TD}
	\|&w -w_\delta\|_{H^1((0,T); L^2(\varOmega))\cap L^2((0,T); H^2(\varOmega))} \lesssim \delta   \quad \text{and}\\
 |& w_\delta ' |_{H^1((0,T); L^2(\varOmega))\cap L^2((0,T); H^2(\varOmega))} \lesssim \frac{1}{\delta} |w|_{H^1((0,T); L^2(\varOmega))\cap L^2((0,T); H^2(\varOmega))}.
	\end{split}
\end{equation}
We assign 
to each $\delta$  a discrete function $w_{\delta, \ell}\in V_\ell$  satisfying \eqref{w_ell_7_TD} and \eqref{w_ell_7_hSm_TD}. 
The recovery sequence will be  $\{w_{\delta , \ell} \},$ 
with appropriate  $\delta = \delta (\ell),$ and we shall show that  
\begin{equation}\label{GL2limit}
\mathcal{G}_{IE, \ell} (w_{\delta, \ell}) \rightarrow \mathcal{G}(w)\, .
\end{equation}
We split the error, 
\begin{equation}
	\begin{split}
 \|\widehat w_{\delta, \ell} '  + L \, \overline w_{\delta, \ell} - &  w' - L w   \|_{L^2((0,T); L^2(\varOmega))}
 \leqslant  \|\widehat w_{\delta, \ell} '  + L \, \overline w_{\delta, \ell} -   \widehat w_{\delta} '  -L \, \overline w_{\delta }   \|_{L^2((0,T); L^2(\varOmega))}\\
&+ \|\widehat w_{\delta } ' + L \, \overline w_{\delta } -  w_{\delta }' -  L w _{\delta } \|_{L^2((0,T); L^2(\varOmega))} 
+ \|  w_{\delta } ' + L \,   w_{\delta } -  w' -  L w  \|_{L^2((0,T); L^2(\varOmega))}\\
&=  : A_1 +A_2 +A_3\, .
	\end{split}
\end{equation}
Notice that since $\widehat  \varPi  v (t)=\sum_{i=0}^q {\tilde \ell}_{ni}(t) v(\tilde t_{ni}),$ we have $\sum_{i=0}^q {\tilde  \ell}_{ni}(t) =1$
and thus
\[\sum_{i=0}^q {\tilde  \ell}' _{ni}(t) =0\]
for all $n.$ Hence, 
\begin{equation}
	\begin{split}
  \|\widehat v  '        \|^2_{L^2((0,T); L^2(\varOmega))} 
& = \sum _{n=0}^{N-1} \, \int_{J_n}  \,  \big \|  \widehat v   '  \big  \|^2 _{ L^2(\varOmega)} \, \d t 
= \sum _{n=0}^{N-1} \, \int_{J_n}  \,  \big \|  \sum_{i=0}^q {\tilde \ell}_{ni} ' (t) v(\tilde t_{ni}) \big  \|^2 _{ L^2(\varOmega)} \, \d t \\
&  
= \sum _{n=0}^{N-1} \, \int_{J_n}  \,  \big \|  \sum_{i=1}^q {\tilde \ell} ' _{ni}(t) (v( \tilde t_{ni}) - v(\tilde t_{n0}) \, )\big  \|^2 _{ L^2(\varOmega)} \, \d t \\
&  
\leqslant \sum _{n=0}^{N-1} \, \int_{J_n}  \,   \sum_{i=1}^q \| {\tilde \ell} '_{ni} \| ^2_{L^\infty(J_n)}  \big \|  v(\tilde t_{ni}) - v(\tilde t_{n0}) \big  \|^2 _{ L^2(\varOmega)} \, \d t \\
     & \leqslant C\,  \sum _{n=0}^{N-1} \,    \, \frac 1 {k_n}  \sum_{i=1}^q    \big \|   \int_{\tilde t_{n0}} ^{\tilde t_{ni}}  v ' (t)  \, \d t  \big  \|^2 _{ L^2(\varOmega)} \,   \\
       & \leqslant C\,  \sum _{n=0}^{N-1} \,    \,    \sum_{i=1}^q      \int_{\tilde t_{n0}} ^{\tilde t_{ni}}  \big \| v ' (t) \big  \|^2 _{ L^2(\varOmega)} \, \d t  \,   \\
& \leqslant C\,  \sum _{n=0}^{N-1} \,   \,   \int_{J_n}  \big \| v  ' (t) \big  \|^2 _{ L^2(\varOmega)} \, \d t   
  =  C \|v  '     \|^2_{L^2((0,T); L^2(\varOmega))} \, .
	\end{split}
\end{equation}
Thus, for  $\theta _\ell (t) : = w_{\delta, \ell} (t)     -    w_{\delta}(t)\, ,$ we have 
\begin{equation}
	\begin{split}
  \|\widehat w_{\delta, \ell} '    -   \widehat w_{\delta} '     \|^2_{L^2((0,T); L^2(\varOmega))} 
=  \|\widehat \theta  _\ell  '     \|^2_{L^2((0,T); L^2(\varOmega))}
& \leqslant C\,   \|\theta  _\ell  '     \|^2_{L^2((0,T); L^2(\varOmega))} \, .
	\end{split}
\end{equation}
Next, we observe  that
\begin{equation}
	\begin{split}
  \|L \, \overline w_{\delta, \ell}   & -L \, \overline w_{\delta }     \| _{L^2((0,T); L^2(\varOmega))} 
  = \Big \{ \sum _{n=0}^{N-1} \, \int_{J_n}  \,  \big \| L \, \varPi_{q-1}  \theta  _\ell     \big  \|^2 _{ L^2(\varOmega)} \, \d t  \Big \}^{1/2} \\
 & \leqslant  \Big \{ \sum _{n=0}^{N-1} \, \int_{J_n}  \,  \big \| L \, \varPi_{q-1}  \theta  _\ell   - L \, P_0   \theta  _\ell     \big  \|^2 _{ L^2(\varOmega)} \, \d t  \Big \}^{1/2} 
  +\Big \{ \sum _{n=0}^{N-1} \, \int_{J_n}  \,  \big \| L \, P_0   \theta  _\ell     \big  \|^2 _{ L^2(\Omega)} \, \ d t  \Big \}^{1/2} \\
  &  \leqslant    \| L \, \varPi_{q-1}  \theta  _\ell   -  L\,    \theta  _\ell       \| _{L^2((0,T); L^2(\varOmega))} 
    +  \| L \,  \theta  _\ell   - L \, P_0   \theta  _\ell      \| _{L^2((0,T); L^2(\varOmega))}	+  \|L\, \theta  _\ell       \| _{L^2((0,T); L^2(\varOmega))} \, \\
     &  \leqslant C   k \,  \| L \,     \theta  _\ell  '      \| _{L^2((0,T); L^2(\varOmega))} 
    +    \|L\, \theta  _\ell       \| _{L^2((0,T); L^2(\varOmega))} \, , 
	\end{split}
\end{equation}
 where we have set  $k=\max_n k_n.$
 We have proved so far that  
\begin{equation}
A_1 \leqslant \|  \theta  _\ell  '     \| _{L^2((0,T); L^2(\varOmega))}  + \|L\, \theta  _\ell       \| _{L^2((0,T); L^2(\varOmega))} + k\,   \|L\, \theta  _\ell  '     \| _{L^2((0,T); L^2(\varOmega))} \, . 
\end{equation}
On the other hand, the approximation properties of ${\widehat \varPi}_{q}, \varPi_{q-1}  $ imply
  \begin{equation}
A_2 \leqslant C \, k \, \big [ 
\|  w_{\delta } '' \| _{L^2((0,T); L^2(\varOmega))}  + \| L \,   w_{\delta}'       \|_{L^2((0,T); L^2(\varOmega))}  \big ]\, .
\end{equation}
Using \eqref{w_ell_7_TD}, \eqref{w_ell_7_hSm_TD}, and \eqref{w_delta_TD}, 
we conclude, therefore, that  
\begin{equation}\label{final_eq}
A_1+ A_2 +A_3 
\leqslant   \, \beta_\ell (w_\delta) +  \frac k 
{\delta ^{m+1} }  \tilde \beta_\ell  \| w\| _{L^2((0,T); H^2(\varOmega))}     
+ C \frac k 
{\delta   }    \| w\| 
_{H^1((0,T); L^2(\varOmega))\cap L^2((0,T); H^2(\varOmega))}  + C \delta\, .
%
\end{equation}
The proof of   \eqref{GL2limit} is completed by suitably selecting  $\delta = \delta (\ell, k)$  in order that the right-hand side of \eqref{final_eq} converges to zero.   
\end{proof}

\begin{remark}[On the abstract assumptions on $\widehat  \varPi _q ,$ $\varPi _{q-1} $]  \label{assumptions_Pi}
	Notice that the assumption \eqref{eq:Lagr6} requires only that  
	$\widehat  \varPi _q $ is any interpolant onto piecewise polynomials of degree $q$ which preserves continuity at the nodes, 
	i.e., $\tilde t_{n0} = t_n$ and $\tilde t_{nq} = t_{n+1}.$ For certain methods, for example, for collocation methods where 
	the nodes include at least one end point of $[0,1]$ and for discontinuous Galerkin methods, one may select $\tilde t_{ni} = t_{ni} ,$
	$i=1,\dotsc , q,$ which may be convenient from computational perspective. 
	Assumption \eqref{exact_integrals}
	requires essentially that the interpolatory quadrature induced by $\varPi _{q-1} $ integrates exactly piecewise 
	polynomials of degree $q.$ This assumption is always satisfied by the methods considered in Section \ref{Se:4}. 
	\end{remark}

Next we shall combine the above results to show that the sequence of discrete minimizers $ (u_\ell) $ converges 
in $ L^2((0,T); H^1(\varOmega))  $ to the exact solution of our problem. We shall use the Aubin--Lions Lemma which  is an  
analogue of the Rellich--Kondrachov theorem in the parabolic case;   see, e.g., \cite{SZ}. 

\begin{lemma}[Aubin--Lions Lemma]\label{Aubin-LionsTheorem} 
Let $ B_0,B,B_1 $ be three Banach spaces, with $ B_0, B_1 $ reflexive. 
Suppose that $ B_0 $ is continuously embedded into $ B $, which is also continuously embedded into 
$ B _1$, and the embedding from $ B_0 $ into $ B $ is compact. For any given $ p_0,p_1 $ with $ 1 < p_0,p_1 < \infty $, let
\begin{align}
W = \lbrace v \: | \: v \in L^{p_0}((0,T);B_0) \;,\; v_t \in L^{p_1}((0,T); B_1) \rbrace.
\end{align}
Then, the embedding from $ W $ into $ L^{p_0}((0,T);B) $ is compact.
\end{lemma}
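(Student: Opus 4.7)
The plan is to follow the classical argument of Lions, which rests on two pillars: an interpolation inequality (Ehrling's lemma) that exploits the compact embedding $B_0\hookrightarrow B$, and a time--equicontinuity estimate for bounded sequences in $W$ that uses the control of $v_t$ in $L^{p_1}((0,T);B_1)$. The goal is to show that any bounded sequence in $W$ admits a subsequence converging strongly in $L^{p_0}((0,T);B)$.

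First, I would establish \emph{Ehrling's lemma}: for every $\eta>0$ there exists $C_\eta>0$ with
\begin{equation*}
\|v\|_B \leqslant \eta\,\|v\|_{B_0} + C_\eta\,\|v\|_{B_1}\qquad \forall v\in B_0.
\end{equation*}
This is proved by contradiction: a failure produces a sequence $u_n$ with $\|u_n\|_{B_0}=1$ and $\|u_n\|_B\geqslant \eta+n\|u_n\|_{B_1}$. Compactness of $B_0\hookrightarrow B$ yields $u_n\to u$ strongly in $B$ along a subsequence, while continuity $B\hookrightarrow B_1$ and the bound force $u_n\to 0$ in $B_1$, so $u=0$, contradicting $\|u\|_B\geqslant\eta$.

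Next, given a bounded sequence $(v_n)\subset W$, reflexivity of $B_0,B_1$ combined with weak compactness in $L^{p_0}$ and $L^{p_1}$ delivers a subsequence (not relabeled) with $v_n\rightharpoonup v$ in $L^{p_0}((0,T);B_0)$ and $\partial_t v_n\rightharpoonup \partial_t v$ in $L^{p_1}((0,T);B_1)$. Replacing $v_n$ by $v_n-v$, it suffices to prove $v_n\to 0$ strongly in $L^{p_0}((0,T);B)$ under the assumption $v_n\rightharpoonup 0$. The key estimate is uniform time--equicontinuity in $B_1$: for $0<h<T$, Hölder's inequality yields
\begin{equation*}
\|v_n(t+h)-v_n(t)\|_{B_1}\leqslant \int_t^{t+h}\|\partial_s v_n(s)\|_{B_1}\,\d s
\leqslant h^{1-1/p_1}\,\|\partial_t v_n\|_{L^{p_1}((0,T);B_1)},
\end{equation*}
whence $\sup_n\int_0^{T-h}\|v_n(t+h)-v_n(t)\|_{B_1}^{p_0}\,\d t\to 0$ as $h\to 0$. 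Applying Ehrling to $v_n(t+h)-v_n(t)$ and integrating in $t$ gives
\begin{equation*}
\int_0^{T-h}\|v_n(t+h)-v_n(t)\|_B^{p_0}\,\d t
\leqslant \eta^{p_0}\,M + C_\eta^{p_0}\,\omega_n(h),
\end{equation*}
where $M$ bounds $\|v_n\|_{L^{p_0}((0,T);B_0)}^{p_0}$ uniformly and $\omega_n(h)\to 0$ uniformly in $n$. Choosing $\eta$ small and then $h$ small produces equicontinuity of the translation operator on $(v_n)$ in $L^{p_0}((0,T);B)$, uniformly in $n$.

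Finally, I would combine this time--equicontinuity with pointwise (in time) precompactness in $B$, which follows from the compact embedding $B_0\hookrightarrow B$ applied to suitable regularized time averages of $v_n$, and invoke the Kolmogorov--Riesz--Fréchet criterion to conclude relative compactness of $(v_n)$ in $L^{p_0}((0,T);B)$. The only possible limit being $v=0$, the whole extracted subsequence converges. The main obstacle is the last balancing step: the time--equicontinuity we can prove directly holds only in the weaker $B_1$--norm, and converting it into equicontinuity in the stronger $B$--norm forces one to play the Ehrling parameter $\eta$ against the translation size $h$, which is precisely where the hypotheses $1<p_0,p_1<\infty$ enter and where reflexivity cannot be dispensed with.
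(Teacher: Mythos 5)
The paper offers no proof of this statement: the Aubin--Lions lemma is quoted as a classical result with a pointer to the literature, so there is nothing in-paper to compare your argument against. Your sketch is a correct outline of the standard proof in the style of Lions and Simon: Ehrling's interpolation inequality extracted from the compact embedding $B_0\hookrightarrow B$ (your contradiction argument for it is sound), weak compactness from reflexivity and $1<p_0,p_1<\infty$ to reduce to $v_n\rightharpoonup 0$, equicontinuity of time translates in $B_1$ from the $L^{p_1}$ control of $v_t$, its upgrade to the $B$-norm by playing the Ehrling parameter $\eta$ against the translation size $h$, and finally the Kolmogorov--Riesz--Fr\'echet/Simon criterion, whose ``pointwise'' half is supplied by the averages $\int_{t_1}^{t_2}v_n\,\d t$ being bounded in $B_0$ and hence precompact in $B$. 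Two remarks. First, there is a shorter way to close: apply Ehrling directly to $v_n(t)$ and integrate in $t$, which reduces the whole claim to strong convergence $v_n\to 0$ in $L^{p_0}((0,T);B_1)$; that follows from the uniform bound in $C([0,T];B_1)$, pointwise convergence $v_n(t)\to 0$ in $B_1$ (via the mollified averages you already invoke), and dominated convergence --- this is Lions' original route and avoids the translation criterion altogether. Second, your closing assertion that reflexivity ``cannot be dispensed with'' is not accurate: Simon's refinement of the lemma holds without reflexivity and even for $p_1=1$; reflexivity is needed only for the particular reduction you chose, namely extracting a weak limit first. Neither remark affects the correctness of your argument.
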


We are now ready to conclude the proof of the main result of this section.

\begin{theorem}[Convergence]\label{Thrm:TimeD} 
Let $ \mathcal{G},\; \mathcal{G}_{ \ell} $ be the energy functionals 
defined in \eqref{ParabFunctional} and \eqref{Functional_k}, respectively. 
Let $u$ be the exact solution of \eqref{ParabolicPDE} and let $ (u_\ell) , $ $u_\ell \in V_{\ell}, $ be a sequence 
of minimizers of $ \mathcal{G}_{ \ell}  $, i.e.,
\begin{equation}\label{minofTDEnergy}
\mathcal{G}_{\ell} (u_\ell) = \inf_{v_\ell \in W_\ell} \mathcal{G}_{ \ell} (v_\ell)\, .
\end{equation}
Then, 
\begin{equation}\label{TDConvOfDiscrMinE}
\hat u_\ell \rightarrow u,  \;\; \; \textrm{in} \;\: L^2((0,T); H^1(\varOmega))  ,\;\ \  
\end{equation}
where $\hat u_\ell ={\widehat \varPi}_{q}u_\ell \, .$  
%
%
\end{theorem}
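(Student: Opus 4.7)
The plan is to follow the classical De Giorgi $\liminf$–$\limsup$ strategy, exploiting the three ingredients that have been assembled earlier: the equi-coercivity (Proposition \ref{TD:EquicoercivityofG}), the $\liminf$ inequality (Lemma \ref{TD:liminf}), and the $\limsup$ inequality (Lemma \ref{TD:limsup}), together with the Aubin--Lions compactness result (Lemma \ref{Aubin-LionsTheorem}).

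First I would produce the upper bound. Applying Lemma \ref{TD:limsup} to the exact solution $w=u$, which is admissible because standard parabolic regularity furnishes $u\in H^{1}((0,T);L^{2}(\varOmega))\cap L^{2}((0,T);H^{2}(\varOmega)\cap H^{1}_{0}(\varOmega))$, yields a recovery sequence $(w_\ell)$ with $w_\ell\in V_\ell$ and $\mathcal{G}_\ell(w_\ell)\to \mathcal{G}(u)$. Since $u$ solves the PDE pointwise a.e. and attains the initial data, we have $\mathcal{G}(u)=0$, whence $\mathcal{G}_\ell(w_\ell)\to 0$. The minimality of $u_\ell$ then gives $\mathcal{G}_\ell(u_\ell)\leqslant \mathcal{G}_\ell(w_\ell)\to 0$, so the discrete energies are uniformly bounded (in fact, vanishing as $\ell\to\infty$).

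Next I would extract a convergent subsequence of the reconstructions. Proposition \ref{TD:EquicoercivityofG} delivers the uniform bound $\|\overline u_\ell\|_{L^{2}((0,T);H^{2})}+\|\hat u'_\ell\|_{L^{2}((0,T);L^{2})}\leqslant C$; using the equivalence of norms on a fixed-dimensional space of time-piecewise polynomials together with the interpolation identity \eqref{eq:Lagr6} (and the fact that $\hat u_\ell$ and $\overline u_\ell$ agree at the interior collocation nodes) I would upgrade this to a bound for $\hat u_\ell$ in $L^{2}((0,T);H^{2})$. Taking $B_0=H^{2}\cap H^{1}_{0}$, $B=H^{1}_{0}$, $B_1=L^{2}$, the embedding $B_0\hookrightarrow B$ is compact, so the Aubin--Lions Lemma yields a (not relabelled) subsequence with $\hat u_\ell\to \tilde u$ strongly in $L^{2}((0,T);H^{1}(\varOmega))$ for some $\tilde u\in H^{1}((0,T);L^{2})\cap L^{2}((0,T);H^{2})$.

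Finally I would identify the limit. Applying Lemma \ref{TD:liminf} along the extracted subsequence, whose hypotheses are met thanks to the uniform energy bound, the assumed quadrature identity \eqref{exact_integrals}, and the strong convergence just established, gives
\begin{equation*}
\mathcal{G}(\tilde u)\leqslant \liminf_{\ell\to\infty} \mathcal{G}_\ell(u_\ell)=0.
\end{equation*}
Hence $\tilde u_t+L\tilde u=f$ a.e. in $\varOmega_T$ and $\tilde u(0)=u_0$ in $H^{1}(\varOmega)$, so by uniqueness of solutions to the linear parabolic problem $\tilde u=u$. Since every subsequence of $(\hat u_\ell)$ admits a further subsequence converging to the same limit $u$, the whole sequence converges, giving \eqref{TDConvOfDiscrMinE}.

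The main technical obstacle I foresee is the passage from the bound on $\overline u_\ell$ in $L^{2}(H^{2})$ to a bound on $\hat u_\ell$ in the same space, which is required to feed Aubin--Lions; a related subtlety is reconciling the statement of Lemma \ref{TD:liminf}, which is phrased in terms of the convergence of the neural network function $U_\ell$ itself, with the compactness that is naturally available for the time-reconstruction $\hat u_\ell$. Both issues are finite-dimensional in the time variable and reduce to norm-equivalences for local polynomials of degree $\leqslant q$, combined with the interpolatory formula \eqref{eq:Lagr6}.
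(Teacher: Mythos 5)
Your proposal is correct and follows essentially the same $\liminf$--$\limsup$ route as the paper: recovery sequence for $w=u$ to bound the discrete energies, stability plus Aubin--Lions for compactness of $\hat u_\ell$, and the $\liminf$ inequality to identify the limit (the paper phrases the last step as showing $\tilde u$ minimizes $\mathcal{G}$ over all admissible $w$, which with $w=u$ reduces to your more direct $\mathcal{G}(\tilde u)\leqslant 0$ argument). The two technical points you flag --- upgrading the $L^2(H^2)$ bound from $\overline u_\ell$ to $\hat u_\ell$, and applying the $\liminf$ lemma to the reconstruction rather than to $u_\ell$ itself --- are likewise passed over implicitly in the paper's proof, and your proposed resolution via local polynomial norm equivalences is the right one.
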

\begin{proof}
Our assumptions imply that the solution  $u$  of \eqref{ParabolicPDE}  satisfies 
$u\in L^2((0,T); H^2(\varOmega)\cap H^1_0(\varOmega))\cap H^1((0,T); L^2(\varOmega)) ,$
and the elliptic regularity
\[ \| u\|  _ {L^2((0,T); H^2(\varOmega))} \leqslant C  \|Lu\|  _ {L^2((0,T); L^2(\varOmega))}   \]
is valid.  
Consider the sequence of minimizers $(u_\ell)\, .$ By their definition, 
\[ \mathcal{G}_{  \ell}  (u_\ell) \leqslant \mathcal{G}_{ \ell} (v_\ell), \qquad \text{for all } v_\ell \in V_\ell\, . \]
In particular,  
$ \mathcal{G}_{\ell}  (u_\ell) \leqslant \mathcal{G}_{ \ell}  (\tilde u_\ell),  $
where $\tilde u_\ell $ is the recovery sequence $w_{\delta, \ell} $ corresponding to $w=u$ constructed  in Lemma \ref{TD:limsup}. 
Since $ \mathcal{G}_{ \ell}  (\tilde u_\ell )$ converges to $ \mathcal{G}  (u )$,  we infer that the sequence $ \mathcal{G}_{ \ell}  (u_\ell)$
 is   uniformly  bounded. The stability of the discrete functional of Proposition \ref{TD:EquicoercivityofG}    yields   
 the uniform bound
\begin{align}
\|\overline u_\ell\|_{L^2((0,T); H^2(\varOmega))} +
\|\widehat u_\ell \|_{L^2((0,T); H^2(\varOmega))} + \|\widehat u_\ell'\|_{L^2((0,T); L^2(\varOmega))} \leqslant C  \, .
\end{align} 
  Applying the  Aubin--Lions Lemma,  we conclude the existence of  $   \tilde u\in L^2((0,T); H^1(\varOmega)) $ 
such that $ \widehat u _\ell   \rightarrow   \tilde u$ in 
$L^2((0,T); H^1(\varOmega))  $ up to a subsequence not re-labeled.
Furthermore, the arguments in Lemma \ref{TD:liminf} show that $L\tilde u \in L^2((0,T); L^2(\varOmega))\, .  $
Next we show that  $   \tilde u $ is the minimizer of $ \mathcal{G} ,$ and hence $   \tilde u= u.$ Indeed, let  
 $ w \in H^1((0,T); L^2(\varOmega))\cap L^2((0,T); H^2(\varOmega)\cap H^1_0(\varOmega)) $, and 
 let  $(w_\ell)$ be such that $w_\ell \to w$ and 
 \[ \mathcal{G}(w) = \lim_{\ell \rightarrow \infty} \mathcal{G}_{ \ell}  (w_\ell) .\]
   Therefore,  the $\liminf$ inequality 
and the fact that $ u_\ell $ are   minimizers of the discrete problems imply that
\begin{equation}
 \mathcal{G} (\tilde u)  
 \leqslant  \liminf_{\ell \rightarrow \infty}   \mathcal{G} _{  \ell}  (u_\ell)  
 \leqslant  \limsup_{\ell \rightarrow \infty} 
 \mathcal{G}_{  \ell}  (u_\ell) 
 \leqslant  \limsup_{\ell \rightarrow \infty} 
 \mathcal{G} _{\ell}  (w_\ell)  =  
 \mathcal{G}  (w) , 
\end{equation}
for all $ w \in H^1((0,T); L^2(\varOmega))\cap L^2((0,T); H^2(\varOmega)\cap H^1_0(\varOmega)). $  
Thus, $  \tilde u $ is   the  minimizer of $  \mathcal{G} ,$ and thus 
$   \tilde u= u$ and the entire sequence satisfies 
\[\hat u_\ell \rightarrow u,  \;\; \; \textrm{in} \;\: L^2((0,T); H^1(\varOmega))  .\;\ \   \]
 \end{proof}

\section{Discrete maximal parabolic $L^2$ regularity in Hilbert spaces}\label{Se:4}
We consider the discretization of differential equations satisfying the maximal parabolic
regularity property in Hilbert spaces by B-stable Runge--Kutta methods, the Lobatto IIIA methods, and 
Galerkin time-stepping methods. We establish discrete maximal parabolic $L^2$ regularity 
by the energy technique.

\subsection{An abstract initial value problem}\label{SSe:1.1}
We consider an initial value problem for a linear parabolic equation,
\begin{equation}
\label{ivp}
\left \{
\begin{aligned} 
&u' (t) +A u(t) =f(t), \quad 0<t< T,\\
&u(0)=u_0,
\end{aligned}
\right .
\end{equation}
in a Hilbert space $(H,(\cdot,\cdot)).$ We denote the induced norm by $|\cdot|, |v|=(v,v)^{1/2}, v\in H.$
We assume that $A$ is a coercive, self-adjoint, densely defined operator on $H,$ $u_0\in V:=\D(A^{1/2}),$ and
$f\in L^2((0,T);H).$

Taking the squares of the norms of both sides of the differential equation
in \eqref{ivp}, we have
\[|u'(s)|^2+|Au(s)|^2+ 2 (u'(s),Au(s))=|f(s)|^2,\]
i.e.,
\[|u'(s)|^2+|Au(s)|^2+\frac \d{\d s}|A^{1/2}u(s)|^2=|f(s)|^2.\]
Integration from $s=0$ to $s=t\in (0,T]$ yields the well-known   maximal $L^2$  regularity,
\begin{equation}
\label{max-reg}
\begin{split}
|A^{1/2}u(t)|^2+\|u'\|_{L^2((0,t);H)}^2+\|Au\|_{L^2((0,t);H)}^2&=|A^{1/2}u_0|^2
+\|f\|_{L^2((0,t);H)}^2\\
&\forall f\in L^2((0,t);H).
\end{split}
\end{equation}
 In other words, for vanishing initial value $u_0,$ the functions $u'$ and $Au$ are well defined 
and have the same regularity as their sum $u'+Au,$ that is,  the given forcing term $f;$
the sum of the norms and the norm of the sum are equivalent.

We refer to the lecture notes \cite{KuW} and to the review article \cite{DHP} for excellent accounts 
of the maximal $L^p$-regularity theory. Coercive elliptic differential operators on $L^s(\varOmega), 
1<s<\infty,$ with general boundary conditions possess  the maximal $L^p$-regularity  property; 
see  \cite{KuW},  \cite{DHP}, and references therein.
For maximal $L^p$-regularity properties of Runge--Kutta methods, see \cite{kovacs2016stable} and references
therein.

\subsection{The numerical methods}\label{SSe:1.2}
Recall that we are using a partition of the time interval  $[0,T],$ into subintervals $J_n:= (t_n,t_{n+1}], $ $n=0,\dotsc,N,$ and 
$k_n = |J_n|.$
Our results apply to arbitrary partitions.

For $s\in \N_0,$ we denote by  $\P(s)$ the space of polynomials of degree at most $s$
with coefficients in $\D(A),$ the domain of the operator $A,$ i.e., the elements $g$ of $\P(s)$  
are of the form 
\[g(t)= \sum_{j=0}^s  t^j w_j, \quad w_j\in  \D(A), \quad j=0,\dotsc, s.\]
With this notation, let $\V_k^{\text{c}} (s)$ and $\V_k^{\text{d}} (s)$ be the spaces of continuous 
and possibly discontinuous, respectively,  piecewise elements of $\P(s)$,
\[\begin{aligned}
&\V_k^{\text{c}} (s):=\{v\in C\big ([0,T];\D(A)\big ): v|_{J_n}\in \P(s), \ n=0,\dotsc, N-1\},\\
&\V_k^{\text{d}} (s):=\{v: [0,T]\to \D(A), \ v|_{J_n}\in \P(s), \ n=0,\dotsc, N-1\}.
\end{aligned}\]
The spaces $\H_k^{\text{c}} (s)$ and $\H_k^{\text{d}} (s)$ are defined analogously, with coefficients $w_j\in H$.

The numerical methods we consider here can be cast in the following abstract form: For $q\in \N,$ and
two given projection or interpolation operators $\varPi_{q-1} ,  {\widetilde \varPi}_{q-1} : C\big ([0,T];H\big )\to \H_k^{\text{d}} (q-1)$,
seek $\widehat U\in \V_k^{\text{c}} (q)$ 
satisfying the initial condition $\widehat U(0)=u_0$ and the pointwise equations
\begin{equation}
\label{eq:nm-abstr1}
\widehat U'(t)+\varPi_{q-1} A\widehat U(t)={\widetilde \varPi}_{q-1} f(t), \quad t\in (t_n,t_{n+1}], \quad n=0,\dotsc,N-1.
\end{equation}
%
Collocation methods as well as Galerkin time-stepping methods can be written in the form \eqref{eq:nm-abstr1};
see \cite{AMN3}. More precisely, $\widehat U$ is  the collocation approximation
and the continuous Galerkin approximation for these two classes of methods, and a suitable reconstruction 
of the solution for the discontinuous Galerkin method. Moreover, as we shall see, our key assumption 
\eqref{eq:nm-abstr2} in Theorem \ref{Th:max-reg:abstr} is satisfied for the Galerkin time-stepping methods 
as well as for some important collocation methods such as the Gauss, Radau IIA and Lobatto  IIIA methods. 
Furthermore, in Section \ref{SSe:Bstable}  we show maximal regularity estimates of  all algebraically 
stable Runge--Kutta methods.

 \begin{theorem}[Maximal $L^2$ regularity for  methods of the form \eqref{eq:nm-abstr1}]\label{Th:max-reg:abstr}
Assume that the operator $\varPi_{q-1}$ in \eqref{eq:nm-abstr1} is such that
\begin{equation}
\label{eq:nm-abstr2}
\int_{t_n}^{t_{n+1}}(\widehat U'(t),\varPi_{q-1}A\widehat U(t) )\, \d t \geqslant 
\int_{t_n}^{t_{n+1}}(\widehat U'(t), A\widehat U(t) )\, \d t =
\frac 12 \big (|A^{1/2}\widehat U(t_{n+1})|^2-|A^{1/2}\widehat U(t_n)|^2\big ).
\end{equation}
 Then, the method satisfies the following discrete analogue of the continuous maximal $L^2$ 
 regularity property \eqref{max-reg}
\begin{equation}
\label{eq:nm-abstr3}
|A^{1/2}\widehat U(t_m)|^2+\|\widehat U'\|_{L^2((0,t_m);H)}^2+\| \varPi_{q-1} A \widehat U\|_{L^2((0,t_m);H)}^2
\leqslant |A^{1/2}\widehat U(0)|^2+\|{\widetilde \varPi}_{q-1} f\|_{L^2((0,t_m);H)}^2
\end{equation}
for $m=1,\dotsc,N.$ 
 \end{theorem}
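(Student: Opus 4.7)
The plan is to mimic the continuous derivation of \eqref{max-reg} at the discrete level, using the pointwise form \eqref{eq:nm-abstr1} together with the hypothesis \eqref{eq:nm-abstr2} as the key structural ingredient.

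First, I would start from the identity \eqref{eq:nm-abstr1} and take the squared norm in $H$ of both sides: for almost every $t\in(t_n,t_{n+1}]$,
\begin{equation*}
|\widehat U'(t)|^2 + 2\bigl(\widehat U'(t),\varPi_{q-1}A\widehat U(t)\bigr) + |\varPi_{q-1}A\widehat U(t)|^2 = |{\widetilde \varPi}_{q-1} f(t)|^2.
\end{equation*}
Integrating over $J_n=(t_n,t_{n+1}]$ and invoking the hypothesis \eqref{eq:nm-abstr2} on the cross term yields the one-step energy inequality
\begin{equation*}
\int_{t_n}^{t_{n+1}}\!|\widehat U'(t)|^2\,\d t
+ |A^{1/2}\widehat U(t_{n+1})|^2 - |A^{1/2}\widehat U(t_n)|^2
+ \int_{t_n}^{t_{n+1}}\!|\varPi_{q-1}A\widehat U(t)|^2\,\d t
\leqslant \int_{t_n}^{t_{n+1}}\!|{\widetilde \varPi}_{q-1}f(t)|^2\,\d t.
\end{equation*}

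Second, I would sum these inequalities from $n=0$ up to $n=m-1$. The $|A^{1/2}\widehat U(t_n)|^2$ contributions telescope, leaving exactly
\begin{equation*}
|A^{1/2}\widehat U(t_m)|^2 + \|\widehat U'\|_{L^2((0,t_m);H)}^2 + \|\varPi_{q-1}A\widehat U\|_{L^2((0,t_m);H)}^2 \leqslant |A^{1/2}\widehat U(0)|^2 + \|{\widetilde \varPi}_{q-1}f\|_{L^2((0,t_m);H)}^2,
\end{equation*}
which is the claim \eqref{eq:nm-abstr3}.

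Granted the hypothesis, the proof is essentially a one-line energy argument that parallels the continuous derivation of \eqref{max-reg}, so there is no real obstacle inside this theorem itself. What is genuinely delicate, and where I would expect the serious work of this section to lie, is the verification of \eqref{eq:nm-abstr2} for the various concrete discretizations: for continuous/discontinuous Galerkin one should get the identity rather than an inequality (after adjusting for the jump at $t_n$ in the discontinuous case), while for collocation schemes such as Gauss, Radau~IIA and Lobatto~IIIA the inequality reflects a positivity property of the interpolatory quadrature applied to the bilinear form $(\widehat U',A\widehat U)$. In other words, the present theorem is the clean abstract wrapper, and the technical burden is shifted to establishing \eqref{eq:nm-abstr2} in each case, which the subsequent subsections (\ref{SSe:Gal}, \ref{SSe:1.3}, \ref{SSe:Bstable}) are designed to do.
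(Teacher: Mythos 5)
Your proof is correct and is essentially identical to the paper's: square the pointwise equation \eqref{eq:nm-abstr1}, integrate over $J_n$, bound the cross term from below via hypothesis \eqref{eq:nm-abstr2}, and telescope the sum over $n$. Your closing remarks about where the real work lies (verifying \eqref{eq:nm-abstr2} for each concrete scheme) also accurately reflect the structure of Section \ref{Se:4}.
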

 
\begin{proof}
Taking the squares of the norms of both sides of the pointwise form \eqref{eq:nm-abstr1}
of the abstract numerical method, we infer that
\begin{equation*}
|\widehat U'(t)|^2+|\varPi_{q-1} A \widehat U(t)|^2+2(\widehat U'(t), \varPi_{q-1}  A \widehat U(t))=|{\widetilde \varPi}_{q-1} f(t)|^2, \quad t\in (t_n,t_{n+1}].
\end{equation*}
Integration over $J_n=(t_n,t_{n+1}]$ yields
\begin{equation}
\label{eq:nm-abstr4}
\begin{split}
\int_{t_n}^{t_{n+1}}|\widehat U'(t)|^2\, \d t&+\int_{t_n}^{t_{n+1}}| \varPi_{q-1} A \widehat U(t)|^2\, \d t
+2\int_{t_n}^{t_{n+1}}(\widehat U'(t),\varPi_{q-1} A\widehat U(t))\, \d t\\
&=\int_{t_n}^{t_{n+1}}|{\widetilde \varPi}_{q-1} f(t)|^2\, \d t.
\end{split}
\end{equation}
Now, utilizing  our assumption \eqref{eq:nm-abstr2}  in \eqref{eq:nm-abstr4}, we can estimate the last term on
the left-hand side and obtain
\begin{equation}
\label{eq:nm-abstr5}
\begin{split}
|A^{1/2}\widehat U(t_{n+1})|^2&+\int_{t_n}^{t_{n+1}}|\widehat U'(t)|^2\, \d t+\int_{t_n}^{t_{n+1}}|\varPi_{q-1} A \widehat U(t)|^2\, \d t\\
&\leqslant |A^{1/2}\widehat U(t_n)|^2+\int_{t_n}^{t_{n+1}}|{\widetilde \varPi}_{q-1} f(t)|^2\, \d t.
\end{split}
\end{equation}
Summation over $n$ from $n=0$ to $n=m-1\leqslant N-1,$ yields the asserted maximal regularity 
estimate \eqref{eq:nm-abstr3}.
\end{proof}

\subsection{Galerkin time-stepping methods}\label{SSe:Gal}

\subsubsection{Continuous Galerkin methods}\label{SSe:cG}
For $q\in \N,$ with starting value $U(0)=u_0,$ we  consider the discretization 
of the initial value problem \eqref{ivp}  by the 
\emph{continuous Galerkin method} cG$(q)$, i.e., 
we seek $\widehat U\in \V_k^{\text{c}} (q)$ such that
\begin{equation}
\label{eq:cG1}
\int_{J_n}   \big( (\widehat  U' ,v )  + ( A\widehat U ,v ) \big) \, \d t  
= \int_{J_n} (f,v)\, \d t \quad \forall v \in \P(q-1)
\end{equation}
for $n=0,\dotsc,N-1$.  Denoting by $P_{q-1}$ the piecewise $L^2$-projection onto $\H_k^{\text{d}}(q-1)$
and using the fact that $\widehat  U' \in \V_k^{\text{d}} (q-1)$, we see that the  \emph{pointwise form} of
\eqref{eq:cG1} is
\begin{equation}
\label{eq:cG2}
   \widehat U' +  P_{q-1} A \widehat U = P_{q-1}f ,
\end{equation}
which is \eqref{eq:nm-abstr1} with $\varPi_{q-1} :={\widetilde \varPi}_{q-1}:=P_{q-1}.$

In this case, since $\widehat  U' \in \V_k^{\text{d}} (q-1)$, we have
\[\int_{t_n}^{t_{n+1}} (\widehat U'(t),A\widehat U(t)-P_{q-1}  A\widehat U(t) )\, \d t=0\]
and see that the key assumption  \eqref{eq:nm-abstr2} holds true as an equality.
It will be useful to observe the following relation
%
\begin{equation}
\label{eq:GL1}
\int_{t_n}^{t_{n+1}}(\varphi ,I_{G, q-1} W(t))\, \d t
= \int_{t_n}^{t_{n+1}}(\varphi , W(t))\, \d t \quad \forall \varphi \in \P(q-1)\, , 
\end{equation}
where $W \in  \P(q)$ and   $I_{G, q-1} v \in \P(q-1)$ denotes the interpolant of $v$ at the $q $ Gauss points of $J_n;$
indeed, the integrand $(\varphi ,W-I_{G, q-1} W)\in \P_{2q-1}$ is integrated exactly by the Gauss quadrature formula 
with $q$ nodes and it vanishes at these nodes.
Therefore, 
\[P_{q-1} | _ {\P(q)} = I_{G, q-1} .\]

Therefore, as an immediate consequence of Theorem \ref{Th:max-reg:abstr},
we have the following maximal $L^2$ regularity of cG methods:

 \begin{proposition}[Maximal $L^2$ regularity of cG methods]\label{Co:max-reg:cG}
 The cG methods satisfy the following analogue of the continuous maximal $L^2$ 
 regularity property \eqref{max-reg}
\begin{equation}
\label{eq:cG3}
\begin{split}
&|A^{1/2}\widehat U(t_m)|^2+\|\widehat U'\|_{L^2((0,t_m);H)}^2+\| P_{q-1} A \widehat U\|_{L^2((0,t_m);H)}^2   \\
&=|A^{1/2}\widehat U(t_m)|^2+\|\widehat U'\|_{L^2((0,t_m);H)}^2+\| I_{G, q-1} A \widehat U\|_{L^2((0,t_m);H)}^2\\
&= |A^{1/2}\widehat U(0)|^2+\|P_{q-1} f\|_{L^2((0,t_m);H)}^2\\
\end{split}
\end{equation}
for $m=1,\dotsc,N.$ 
 \end{proposition}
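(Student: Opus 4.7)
The plan is to derive this as an immediate corollary of Theorem \ref{Th:max-reg:abstr}, applied with $\varPi_{q-1} = \widetilde\varPi_{q-1} = P_{q-1}$. The pointwise form \eqref{eq:cG2} of the cG method exactly matches the abstract framework \eqref{eq:nm-abstr1}, so the only nontrivial task is to verify the key assumption \eqref{eq:nm-abstr2} and then upgrade the abstract inequality to the equality claimed in \eqref{eq:cG3}.

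The crucial observation for verifying \eqref{eq:nm-abstr2} is the regularity $\widehat U' \in \V_k^{\text{d}}(q-1)$, which is already noted in the excerpt. On each subinterval $J_n$, $\widehat U'(t)\in \P(q-1)$, and by definition of the piecewise $L^2$-projection, the residual $A\widehat U(t) - P_{q-1} A\widehat U(t)$ is $L^2(J_n;H)$-orthogonal to $\P(q-1)$. Consequently
\begin{equation*}
\int_{J_n}\bigl(\widehat U'(t), A\widehat U(t) - P_{q-1} A\widehat U(t)\bigr)\, \d t = 0,
\end{equation*}
so that \eqref{eq:nm-abstr2} holds with equality rather than just inequality. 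Once this is established, I would simply retrace the proof of Theorem \ref{Th:max-reg:abstr}: starting from \eqref{eq:nm-abstr4}, the cross term can now be replaced exactly by $|A^{1/2}\widehat U(t_{n+1})|^2 - |A^{1/2}\widehat U(t_n)|^2$ using the self-adjointness of $A,$ so \eqref{eq:nm-abstr5} becomes an equality. Summation over $n=0,\dotsc,m-1$ produces the claimed identity with $P_{q-1} A\widehat U$ on the left.

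To obtain the second form in \eqref{eq:cG3}, with $I_{G,q-1} A \widehat U$ in place of $P_{q-1} A \widehat U$, I would invoke relation \eqref{eq:GL1}. Since $\widehat U$ is piecewise in $\P(q)$ (so $A\widehat U\in \P(q)$ on each $J_n$), \eqref{eq:GL1} shows that $P_{q-1}$ and $I_{G,q-1}$ agree on $\P(q)$; thus $P_{q-1} A\widehat U = I_{G,q-1} A\widehat U$ pointwise and the two $L^2$ norms coincide.

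There is essentially no obstacle here beyond the orthogonality check, since the hard analytic work has been packaged into Theorem \ref{Th:max-reg:abstr}. The only point where one must be careful is to confirm that equality genuinely propagates through the abstract argument, which it does because both the identity obtained by squaring \eqref{eq:cG2} on each interval and the Galerkin orthogonality above are exact. No further regularity or quadrature assumptions on $A$, $u_0$, or $f$ are needed beyond those already built into the setup of Section \ref{SSe:1.1}.
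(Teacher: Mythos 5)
Your proposal is correct and follows essentially the same route as the paper: identify the cG pointwise form with the abstract framework via $\varPi_{q-1}=\widetilde\varPi_{q-1}=P_{q-1}$, observe that the $L^2$-orthogonality of $A\widehat U - P_{q-1}A\widehat U$ to $\widehat U'\in\V_k^{\text{d}}(q-1)$ makes the key assumption \eqref{eq:nm-abstr2} an equality that propagates through the abstract argument, and use \eqref{eq:GL1} to identify $P_{q-1}$ with $I_{G,q-1}$ on $\P(q)$. No gaps.
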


Since, obviously,
\begin{equation}
\label{eq:cG+dG}
\int_{t_n}^{t_{n+1}}|P_{q-1} f(t)|^2\, \d t\leqslant \int_{t_n}^{t_{n+1}}|f(t)|^2\, \d t
\implies \|P_{q-1} f\|_{L^2((0,t_m);H)}^2\leqslant  \|f\|_{L^2((0,t_m);H)}^2,
\end{equation}
\eqref{eq:cG3} yields also the estimate
\begin{equation}
\label{eq:cG4}
|A^{1/2}\widehat U(t_m)|^2+\|\widehat U'\|_{L^2((0,t_m);H)}^2+\| P_{q-1} A \widehat U\|_{L^2((0,t_m);H)}^2\leqslant |A^{1/2}\widehat U(0)|^2+\|f\|_{L^2((0,t_m);H)}^2
\end{equation}
for $m=1,\dotsc,N.$

\subsubsection{Discontinuous Galerkin methods}\label{SSe:dG}

For $q\in \N,$ with starting value $U(0)=u_0,$ we  consider the discretization 
of the initial value problem \eqref{ivp}  by the 
\emph{discontinuous Galerkin method} dG$(q-1)$, i.e., 
we seek $U\in \V_k^{\text{d}} (q-1)$ such that
\begin{equation}
\label{dg}
\int_{J_n}   \big( (U' ,v )  + ( AU ,v ) \big) \, \d t  
+ ( U_n^{+}-U_n, v_n^{+})
= \int_{J_n} (f,v)\, \d t \quad \forall v \in \P(q-1)
\end{equation}
for $n=0,\dotsc,N-1$.  As usual, we use the notation $v_n:=v(t_n),$  $v_n^{+}:=\lim_{s\searrow 0} v(t_n+s)$.

Following \cite{MakrN2006posteriori},  we define the \emph{reconstruction} $\wU$ of the dG approximation $U,$
the analogue of the collocation approximation,  by extended interpolation at the Radau nodes
 $t_{ni}=t_n+c_i k_n, 0<c_1<\dotsb<c_q=1,$
\[\widehat U ( t_{ni})= U( t_{ni}), \quad i=0, \dotsc ,q \quad (U(t_{n0})=U_n).\]
The reconstruction satisfies the relations 
\begin{equation}
\label{wU_def}
\begin{split}
&\wU_n^+ = U_n,\\
&\int_{J_n}    ( \wU' ,v )\,\d t = \int_{J_n}    ( U' ,v )   \, \d t
+ ( U_n^+-U_n, v_n^+)  \quad \forall v \in \P(q-1) .
\end{split}
\end{equation}
Consequently, we can  reformulate the discontinuous Galerkin method \eqref{dg} as
\begin{equation}
\label{dgwU}
\int_{J_n} \big(   ( \wU' ,v )  + ( AU  ,v ) \big)\, \d t
= \int_{J_n} (f,v) \, \d t  \quad \forall v \in \P(q-1) .
\end{equation}
Denoting again by $P_{q-1}$ the piecewise $L^2$-projection onto $\H_k^{\text{d}}(q-1)$, 
we see that the \emph{pointwise form} of \eqref{dgwU} is
\begin{equation}
\label{eq:dG1}
   \widehat U' +  AU = P_{q-1}f ,
\end{equation}
i.e.,
\begin{equation}
\label{eq:dG2}
   \widehat U' +  I_{q-1} A \widehat U = P_{q-1}f ,
\end{equation}
which is \eqref{eq:nm-abstr1} with $\varPi_{q-1}:= I_{q-1}$ and  ${\widetilde \varPi}_{q-1}:=P_{q-1}.$

 
Let us now see that our key assumption  \eqref{eq:nm-abstr2} is satisfied also in this case, i.e., that
\begin{equation}
\label{eq:dG4} 
\int_{t_n}^{t_{n+1}}\big (\widehat U'(t),A(\widehat U(t) -I_{q-1}\widehat U(t))\big )\, \d t\leqslant 0.
\end{equation}

It is advantageous to reformulate \eqref{eq:dG4} in the form
\begin{equation}
\label{eq:dG5}
\begin{split}
&\int_{t_n}^{t_{n+1}}\big ((\widehat U-I_{q-1}\widehat U)'(t),A(\widehat U(t) -I_{q-1}\widehat U(t))\big )\, \d t\\
&+\int_{t_n}^{t_{n+1}}\big ((I_{q-1}\widehat U)'(t),A(\widehat U(t) -I_{q-1}\widehat U(t))\big )\, \d t\leqslant 0.
\end{split}
\end{equation}
Now, the integrand $\tilde \pi:=\big ((I_{q-1}\widehat U)'(\cdot),A(\widehat U(\cdot) -I_{q-1}\widehat U(\cdot))\big )$
in the second integral in \eqref{eq:dG5} is a polynomial of degree at most $2q-2;$ therefore, $\tilde \pi$ is integrated 
exactly by the Radau quadrature formula with $q$ nodes. Furthermore, $\tilde \pi$
vanishes at the quadrature nodes $t_{n1},\dotsc,t_{nq}$. Thus, the second integral 
vanishes, and \eqref{eq:dG4} can be written in the form
\begin{equation*}
\int_{t_n}^{t_{n+1}}\big (\widehat U'(t)-(I_{q-1}\widehat U)'(t),A(\widehat U(t) -I_{q-1}\widehat U(t))\big )\, \d t\leqslant 0,
\end{equation*}
i.e.,
\begin{equation*}
\frac 12 \int_{t_n}^{t_{n+1}}\frac {\d}{\d t} |A^{1/2}(\widehat U(t) -I_{q-1}\widehat U(t))|^2\, \d t\leqslant 0,
\end{equation*}
that is, since $t_{nq}=t_{n+1},$
\begin{equation*}
-\frac 12 |A^{1/2}(\widehat U(t_n) -I_{q-1}\widehat U(t_{n+}))|^2\leqslant 0,
\end{equation*}
which is obviously valid. Therefore, \eqref{eq:dG4} is valid.

In view of \eqref{eq:dG4}, as an immediate consequence of Theorem \ref{Th:max-reg:abstr},
we have the following maximal $L^2$ regularity of dG methods:

  \begin{proposition}[Maximal $L^2$ regularity of dG methods]\label{Th:max-reg:dG}
 The dG methods satisfy the following analogue of the continuous maximal $L^2$ 
 regularity property \eqref{max-reg}
\begin{equation}
\label{eq:dG1-mr}
|A^{1/2}\widehat U(t_m)|^2+\|\widehat U'\|_{L^2((0,t_m);H)}^2+\| AU\|_{L^2((0,t_m);H)}^2\leqslant |A^{1/2}\widehat U(0)|^2+\|P_{q-1} f\|_{L^2((0,t_m);H)}^2
\end{equation}
for $m=1,\dotsc,N.$ 
 \end{proposition}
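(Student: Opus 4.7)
The plan is to verify that the discontinuous Galerkin method fits exactly into the abstract framework of Theorem \ref{Th:max-reg:abstr} and then simply invoke it. Nearly all of the work has already been done in the text leading up to the proposition statement: the pointwise reformulation \eqref{eq:dG2} casts dG in the abstract form \eqref{eq:nm-abstr1} with the specific choices $\varPi_{q-1} := I_{q-1}$ (Radau interpolation at $t_{n1},\dotsc,t_{nq}$) and ${\widetilde \varPi}_{q-1} := P_{q-1}$, and the argument culminating in \eqref{eq:dG4} establishes the crucial inequality \eqref{eq:nm-abstr2} for this particular $\varPi_{q-1}$.

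More concretely, the first step is to record that, by \eqref{eq:dG4},
\[
\int_{t_n}^{t_{n+1}}(\widehat U'(t), I_{q-1}A\widehat U(t))\,\d t - \int_{t_n}^{t_{n+1}}(\widehat U'(t), A\widehat U(t))\,\d t = -\int_{t_n}^{t_{n+1}}(\widehat U'(t), A(\widehat U(t) - I_{q-1}\widehat U(t)))\,\d t \geqslant 0,
\]
which is precisely the hypothesis \eqref{eq:nm-abstr2}, with the right-hand side there being $\tfrac 12(|A^{1/2}\widehat U(t_{n+1})|^2 - |A^{1/2}\widehat U(t_n)|^2)$ by the fundamental theorem of calculus applied to $|A^{1/2}\widehat U(t)|^2$ (using self-adjointness of $A^{1/2}$ and the continuity of $\widehat U$ across nodes). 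The second step is to apply Theorem \ref{Th:max-reg:abstr}, which yields
\[
|A^{1/2}\widehat U(t_m)|^2 + \|\widehat U'\|_{L^2((0,t_m);H)}^2 + \|I_{q-1}A\widehat U\|_{L^2((0,t_m);H)}^2 \leqslant |A^{1/2}\widehat U(0)|^2 + \|P_{q-1}f\|_{L^2((0,t_m);H)}^2.
\]

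The third step is a purely algebraic identification: since $I_{q-1}$ is an interpolation operator acting in the time variable only while $A$ acts in the spatial variable, the two operators commute on piecewise polynomials in time with values in $\D(A)$. Hence $I_{q-1}A\widehat U = AI_{q-1}\widehat U = AU$, because $U = I_{q-1}\widehat U$ by the very definition of the reconstruction (the values $U(t_{ni}) = \widehat U(t_{ni})$ for $i=1,\dotsc,q$ fix the polynomial $U$ on $J_n$ uniquely). Substituting $\|AU\|$ for $\|I_{q-1}A\widehat U\|$ in the estimate gives exactly \eqref{eq:dG1-mr}, completing the proof.

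I do not anticipate any real obstacle: the essential analytic content, namely the inequality \eqref{eq:dG4}, has already been established via the Radau quadrature exactness argument (the integrand is a polynomial of degree at most $2q-2$ that vanishes at the $q$ Radau nodes), and the remainder is bookkeeping. The only point requiring a moment's attention is the commutation $I_{q-1}A = AI_{q-1}$ on $\V_k^{\text{c}}(q)$, which is immediate once one remembers that $I_{q-1}$ interpolates in $t$ and $A$ is time-independent.
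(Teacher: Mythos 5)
Your proposal is correct and follows essentially the same route as the paper: it casts dG in the abstract form \eqref{eq:nm-abstr1} with $\varPi_{q-1}=I_{q-1}$, ${\widetilde \varPi}_{q-1}=P_{q-1}$, verifies the key hypothesis \eqref{eq:nm-abstr2} via the already-established inequality \eqref{eq:dG4}, and invokes Theorem \ref{Th:max-reg:abstr}, with the final identification $I_{q-1}A\widehat U=AI_{q-1}\widehat U=AU$ being exactly the bookkeeping the paper leaves implicit in passing from \eqref{eq:dG2} back to \eqref{eq:dG1}. No gaps.
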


In view of \eqref{eq:cG+dG},  \eqref{eq:dG1} yields also the estimate
\begin{equation}
\label{eq:dG6}
|A^{1/2}\widehat U(t_m)|^2+\|\widehat U'\|_{L^2((0,t_m);H)}^2+\| AU\|_{L^2((0,t_m);H)}^2\leqslant |A^{1/2}\widehat U(0)|^2+\|f\|_{L^2((0,t_m);H)}^2
\end{equation}
for $m=1,\dotsc,N.$

\subsection{Collocation Runge--Kutta methods}\label{SSe:1.3}
For $q\in \N,$ let $0\leqslant c_1<\dotsb<c_q\leqslant 1$ denote the collocation nodes.
The collocation approximation $\widehat U\in \V_k^{\text{c}} (q)$ satisfies the initial condition $\widehat U(0)=u_0$ 
as well as the collocation conditions
\begin{equation}
\label{eq:coll-1}
\widehat U'(t_{ni})+A\widehat U(t_{ni})=f(t_{ni}), \quad i=1,\dotsc,q, \quad n=0,\dotsc,N-1.
\end{equation}
Here, we assumed that $f(t)\in H$ for $t\in (0,T].$
Thus,   \cite{AMN2} and \cite{AMN3}, if we let $I_{q-1} : C\big ([0,T];H\big )\to \H_k^{\text{d}} (q-1)$
denote the interpolation operator at the collocation nodes $t_{ni}, i=1,\dotsc,q, n=0,\dotsc,N-1,$
and use the fact that $\widehat U'\in \V_k^{\text{d}} (q-1),$ we can write \eqref{eq:coll-1} in \emph{pointwise form} as 
\begin{equation}
\label{eq:coll-2}
\widehat U'(t)+I_{q-1} A\widehat U(t)=I_{q-1} f(t), \quad t\in (t_n,t_{n+1}], \quad n=0,\dotsc,N-1,
\end{equation}
which is \eqref{eq:nm-abstr1} with $\varPi_{q-1}:= {\widetilde \varPi}_{q-1}:=I_{q-1}.$
The interpolants $U:=I_{q-1} \widehat U$ and $I_{q-1} f$ are elements of $\V_k^{\text{d}} (q-1)$ and $\H_k^{\text{d}} (q-1),$
respectively, and thus, in general, for positive $c_1,$ discontinuous at the nodes $t_0,\dotsc,t_{N-1}.$

The corresponding $q$-stage Runge--Kutta method is specified by the coefficients
\begin{equation}
\label{RK-coef}
a_{ij} =\int _0^{c_i} \ell_j (\tau )\, \d \tau, 
\quad b_{i} =\int _0^{1} \ell_i(\tau )\, \d \tau,
\quad   i,j=1,\dotsc,q;
\end{equation}
here, $\ell_1,\dotsc, \ell_q\in \P_{q-1}$ are the Lagrange polynomials for the
collocation nodes $c_1,\dotsc,c_q,$ $\ell_i(c_j)$ $=\delta_{ij}, i,j=1,\dotsc,q.$
In other words, the \emph{stage order} of the Runge--Kutta method is $q.$ 

With starting value $U_0=u_0,$ we  now consider the discretization of the initial value problem \eqref{ivp}  
by the $q$-stage Runge--Kutta method \eqref{RK-coef}: we recursively define approximations 
$U_\ell\in \D(A)$ to the nodal values $u(t_\ell)$, as well as internal approximations $U_{\ell i}\in \D(A)$ 
to the intermediate  values $u(t_{\ell i}),$ by
\begin{equation}\label{eq:RK1}
\left \{
\begin{alignedat}{2}
&U_{ni}=U_n- k_n \sum_{j=1}^q a_{ij} \big (A U_{nj} -f(t_{nj})\big ),\quad &&i=1,\dotsc,q,\\
&U_{n+1} =U_n- k_n \sum_{i=1}^q b_i \big (A U_{ni} -f(t_{n i})\big ), \quad &&
\end{alignedat}
\right . 
\end{equation}
$n=0,\dotsc,N-1.$ Here, we assumed that $f(t)\in H$ for $t\in (0,T].$

It is well known that  the collocation and Runge--Kutta methods \eqref{eq:coll-1} and \eqref{eq:RK1},
respectively, are equivalent in the sense that they yield the same approximations at the nodes
and at the intermediate nodes, i.e.,
\begin{equation}\label{eq:equivalent}
\begin{alignedat}{2}
&\widehat U(t_n)=U_n, \quad &&n=1,\dotsc,N,\\
&\widehat U(t_{ni})=U_{ni}, \quad &&i=1,\dotsc,q, \ \, n=0,\dotsc,N-1.
\end{alignedat}
\end{equation}

\subsubsection{Maximal regularity of Gauss and Radau IIA methods}\label{SSe:Ga-Ra}
We treat this case separately for various reasons: (i) These two families of Runge--Kutta
methods are particularly interesting and popular for parabolic equations. (ii) The methods
satisfy our key assumption \eqref{eq:nm-abstr2} and, consequently, 
discrete analogues of the continuous maximal $L^2$ regularity property
\eqref{max-reg}, with inequality in the place of the equality for the Radau IIA methods. 
(iii) The proofs are short and elegant, immediate consequences of the abstract result
in Theorem \ref{Th:max-reg:abstr}.

We have already seen that assumption  \eqref{eq:nm-abstr2} is satisfied for the Radau IIA methods;
see \eqref{eq:dG2} and recall that $I_{q-1}$ there is the interpolation operator at the Radau nodes
$t_{n1},\dotsc,t_{nq}.$ 

Now, we will see  that the Gauss method satisfies \eqref{eq:dG4} as an equality, 
with  $I_{q-1}$ now, of course,  the interpolation operator at the Gauss nodes
$t_{n1},\dotsc,t_{nq}.$ Indeed,  the integrand $\pi:=\big (\widehat U'(\cdot),A(\widehat U(\cdot) -I_{q-1}\widehat U(\cdot))\big )$
in \eqref{eq:dG4} is a polynomial of degree at most $2q-1;$ therefore, $\pi$ is integrated 
exactly by the Gauss quadrature formula with $q$ nodes. Furthermore, $\pi$
vanishes at the quadrature nodes $t_{n1},\dotsc,t_{nq}$. Thus, 
\eqref{eq:dG4} holds as an equality in this case.

In view of \eqref{eq:dG4}, as an immediate consequence of Theorem \ref{Th:max-reg:abstr},
we have the following maximal $L^2$ regularity property for Gauss and Radau IIA methods:

 \begin{proposition}[Maximal $L^2$ regularity of Gauss and Radau IIA methods]\label{Co:max-reg:G+Rad1}
 The Gauss and Radau IIA methods satisfy the exact discrete analogues of the continuous maximal $L^2$ 
 regularity property \eqref{max-reg}, namely,
\begin{equation}
\label{eq:Ga-Ra1}
|A^{1/2}\widehat U(t_m)|^2+\|\widehat U'\|_{L^2((0,t_m);H)}^2+\|I_{q-1} A\widehat U\|_{L^2((0,t_m);H)}^2\leqslant |A^{1/2}\widehat U(0)|^2+\|I_{q-1} f\|_{L^2((0,t_m);H)}^2
\end{equation}
for $m=1,\dotsc,N,$ with equality for the Gauss methods.
 \end{proposition}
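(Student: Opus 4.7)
My plan is to obtain both statements as direct corollaries of the abstract Theorem \ref{Th:max-reg:abstr}. Since the pointwise form \eqref{eq:coll-2} of the collocation method is precisely \eqref{eq:nm-abstr1} with $\varPi_{q-1}=\widetilde\varPi_{q-1}=I_{q-1}$ (the interpolation operator at the $q$ collocation nodes), the only thing to verify in each case is the key hypothesis \eqref{eq:nm-abstr2}, which is equivalent to showing
\begin{equation*}
\int_{t_n}^{t_{n+1}}\big (\widehat U'(t),A(\widehat U(t) - I_{q-1}\widehat U(t))\big )\, \d t \leqslant 0,
\end{equation*}
with equality in the Gauss case. Once this is done, summing the per-interval inequality \eqref{eq:nm-abstr5} from $n=0$ to $n=m-1$ produces \eqref{eq:Ga-Ra1}.

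For the Gauss methods, I would observe that the integrand
\[\pi(t):=\big (\widehat U'(t),A(\widehat U(t) - I_{q-1}\widehat U(t))\big )\]
is a scalar polynomial in $t$ of degree at most $(q-1)+q=2q-1$ on $J_n$ (since $\widehat U'\in \P(q-1)$ and $\widehat U - I_{q-1}\widehat U\in \P(q)$). The $q$-point Gauss quadrature rule on $J_n$ is exact on polynomials of degree up to $2q-1$, so the integral equals the sum of the values of $\pi$ at the collocation nodes $t_{n1},\dotsc,t_{nq}$ times the positive weights. At each such node $\widehat U - I_{q-1}\widehat U$ vanishes, so $\pi$ vanishes, and the integral is exactly zero. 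Hence \eqref{eq:nm-abstr2} holds as an equality, and Theorem \ref{Th:max-reg:abstr} returns \eqref{eq:Ga-Ra1} with equality.

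For the Radau IIA methods, the same argument is not available because now $\pi$ has degree $2q-1$ but the Radau quadrature on $q$ nodes is only exact on polynomials of degree $\leqslant 2q-2$. This is where I would reuse the calculation already carried out for the dG$(q-1)$ reconstruction in Subsection \ref{SSe:dG}, since Radau IIA with $c_q=1$ is precisely the collocation realization of dG$(q-1)$. Splitting $\widehat U' = (\widehat U - I_{q-1}\widehat U)' + (I_{q-1}\widehat U)'$ as in \eqref{eq:dG5}, the second piece gives an integrand of degree $\leqslant 2q-2$ which the Radau rule integrates exactly and which vanishes at the nodes $t_{n1},\dotsc,t_{nq}$, hence contributes zero. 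The first piece is a total derivative of $\tfrac{1}{2}|A^{1/2}(\widehat U - I_{q-1}\widehat U)|^2$, and its integral over $J_n$ equals $-\tfrac{1}{2}|A^{1/2}(\widehat U(t_n)-I_{q-1}\widehat U(t_{n+}))|^2\leqslant 0$ thanks to $t_{nq}=t_{n+1}$.

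The only non-routine step is the Radau case; the Gauss case is essentially a one-line quadrature remark, while Radau requires the careful decomposition into the node-vanishing part and the sign-definite boundary part. Both pieces of information are already assembled in Subsection \ref{SSe:dG}, so no new computation should be needed; the proposition follows by citing Theorem \ref{Th:max-reg:abstr} together with these two verifications of \eqref{eq:nm-abstr2}.
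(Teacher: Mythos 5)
Your proposal is correct and follows essentially the same route as the paper: verify the key hypothesis \eqref{eq:nm-abstr2} via the exactness of $q$-point Gauss quadrature on polynomials of degree $2q-1$ (giving equality), and for Radau IIA via the splitting of $\widehat U'$ already carried out for \eqref{eq:dG4} in the dG subsection, then invoke Theorem \ref{Th:max-reg:abstr}. The only slight imprecision is the claim that Radau IIA is ``precisely'' the collocation realization of dG$(q-1)$ --- the two methods treat the forcing term differently ($I_{q-1}f$ versus $P_{q-1}f$) --- but this is immaterial here since the verification of \eqref{eq:nm-abstr2} does not involve $f$ at all.
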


Let us now give an alternative form of \eqref{eq:Ga-Ra1}.

 \begin{proposition}[Alternative form of the maximal $L^2$ regularity of Gauss and Radau IIA methods]\label{Cor:max-reg:G+Rad2}
Let $0<c_1,\dotsc,c_q\leqslant 1$ and $b_1,\dotsc,b_q$ be the nodes and the weights of the Gauss and Radau
quadrature formulas in the interval $[0,1],$ respectively.
Then, the Gauss and Radau IIA methods satisfy the maximal $L^2$ regularity property 
\begin{equation}
\label{eq:Ga-Ra6}
\begin{split}
|A^{1/2}U_m|^2&+ \sum_{n=0}^{m-1}  k_n  \sum_{i=1}^q b_i |\widehat U'(t_{ni})|^2
+ \sum_{n=0}^{m-1} k_n \sum_{i=1}^q b_i |AU_{ni}|^2\\
&\leqslant |A^{1/2}U_0|^2+ \sum_{n=0}^{m-1} k_n  \sum_{i=1}^q b_i |f(t_{ni})|^2
\end{split}
\end{equation}
for $m=1,\dotsc,N,$ with equality for the Gauss methods. Here, in the case of the Radau IIA methods,
$\widehat U'(t_{nq})$ stands for the left-hand derivative at $t_{n+1}, \lim_{t\nearrow t_{n+1}}\widehat U'(t).$
 \end{proposition}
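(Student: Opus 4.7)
The plan is to derive \eqref{eq:Ga-Ra6} directly from the preceding Proposition \ref{Co:max-reg:G+Rad1} by evaluating each of the three $L^2$ integrals in \eqref{eq:Ga-Ra1} \emph{exactly} with the Gauss or Radau IIA quadrature formula underlying the scheme. The driving observation is that on each subinterval $J_n$ all three integrands are scalar polynomials in $t$ of degree at most $2q-2$, which is within the exactness range of both quadrature rules (exactness degree $2q-1$ for Gauss, $2q-2$ for Radau IIA).

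Concretely, since $\widehat U|_{J_n}\in \P(q)$ we have $\widehat U'|_{J_n}\in \P(q-1)$, so $t\mapsto |\widehat U'(t)|^2=(\widehat U'(t),\widehat U'(t))$ is a scalar polynomial of degree at most $2q-2$ on $J_n$, and the quadrature rule with nodes $t_{ni}=t_n+c_ik_n$ and weights $k_nb_i$ yields
\[\int_{J_n}|\widehat U'(t)|^2\,\d t=k_n\sum_{i=1}^qb_i|\widehat U'(t_{ni})|^2.\]
Likewise, since $I_{q-1}A\widehat U|_{J_n}\in \P(q-1)$, the function $|I_{q-1}A\widehat U(t)|^2$ is of degree at most $2q-2$, and using $I_{q-1}A\widehat U(t_{ni})=A\widehat U(t_{ni})=AU_{ni}$ (by the interpolation property together with the nodal identity $\widehat U(t_{ni})=U_{ni}$ from \eqref{eq:equivalent}) gives
\[\int_{J_n}|I_{q-1}A\widehat U(t)|^2\,\d t=k_n\sum_{i=1}^qb_i|AU_{ni}|^2.\]
The same reasoning, with $I_{q-1}f(t_{ni})=f(t_{ni})$, produces the analogous identity for $\|I_{q-1}f\|_{L^2(J_n;H)}^2$. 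Summing these identities over $n=0,\dotsc,m-1$, identifying $|A^{1/2}U_m|^2$ with $|A^{1/2}\widehat U(t_m)|^2$ and $|A^{1/2}U_0|^2$ with $|A^{1/2}\widehat U(0)|^2$ via \eqref{eq:equivalent}, and substituting into \eqref{eq:Ga-Ra1} yields \eqref{eq:Ga-Ra6}; the equality in the Gauss case is preserved since equality holds throughout in \eqref{eq:Ga-Ra1}.

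The proof is essentially a bookkeeping exercise and I do not foresee a genuine obstacle. The single point that deserves explicit comment is the Radau IIA case, where $c_q=1$ and so $t_{nq}=t_{n+1}$ is a node at which $\widehat U'$ is generally discontinuous (the polynomial pieces on $J_n$ and $J_{n+1}$ need not match there). The quadrature in each term is applied intrinsically to the polynomial $\widehat U'|_{J_n}$, so the appropriate value at $t_{nq}$ is the left-hand limit $\lim_{t\nearrow t_{n+1}}\widehat U'(t)$, exactly as stated in the proposition; no ambiguity arises in $|AU_{ni}|^2$ or $|f(t_{ni})|^2$ since $\widehat U$ itself is continuous and $f$ is evaluated pointwise.
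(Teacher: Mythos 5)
Your argument is correct and is essentially identical to the paper's proof: both rely on the fact that $|\widehat U'|^2$, $|I_{q-1}A\widehat U|^2$, and $|I_{q-1}f|^2$ are polynomials of degree at most $2q-2$ on each $J_n$ and are therefore integrated exactly by the $q$-node Gauss and Radau quadrature rules, which converts each $L^2$ norm in \eqref{eq:Ga-Ra1} into the corresponding weighted nodal sum. Your additional remarks on the nodal identifications via \eqref{eq:equivalent} and on the left-hand derivative at $t_{nq}$ for Radau IIA are accurate elaborations of details the paper leaves implicit.
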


\begin{proof}
Obviously, $|\widehat U'|^2, |I_{q-1}A\widehat U|^2,$ and  $|I_{q-1}f|^2$ are integrated exactly by both the Gauss and Radau
quadrature formulas in each subinterval $[t_n,t_{n+1}]$ as polynomials of degree at most $2q-2.$ Consequently,
for instance,
\[\int_{t_n}^{t_{n+1}}|\widehat U'(t)|^2\, \d t=k _n\sum_{i=1}^q b_i |\widehat U'(t_{ni})|^2,\]
and  \eqref{eq:Ga-Ra1} can be equivalently written in the form  \eqref{eq:Ga-Ra6}.
\end{proof}

\subsubsection{Maximal regularity of  Lobatto IIIA methods}\label{SSe:Lobatto} 
Here, we focus on the Lobatto IIIA methods, which are A-stable but are not B-stable.
So, for $q\in \N,$ let $0=c_1<\dotsb<c_q= 1$ denote the Lobatto nodes;
then, the collocation approximation $\widehat U\in \V_k^{\text{c}} (q)$ satisfies 
the initial condition $\widehat U(0)=u_0$ 
as well as the collocation conditions
\begin{equation}
\label{eq:Lob1}
\widehat U'(t_{ni})+A\widehat U(t_{ni})=f(t_{ni}), \quad i=1,\dotsc,q, \quad n=0,\dotsc,N-1.
\end{equation}
We assumed that $f(t)\in H$ for $t\in (0,T].$ In this case, the pointwise form of the method is again
\begin{equation}
\label{eq:Lob2}
\widehat U'(t)+I_{q-1} A\widehat U(t)=I_{q-1} f(t), \quad t\in (t_n,t_{n+1}], \quad n=0,\dotsc,N-1;
\end{equation}
compare to \eqref{eq:coll-2}. Notice, however, the important fact that the interpolants $\widetilde U:=I_{q-1} \widehat U$ 
and $I_{q-1} f$ are now elements of $\V_k^{\text{c}} (q-1)$ and $\H_k^{\text{c}} (q-1),$ 
respectively, since $c_0=0$ and $c_q=1,$ and \emph{thus are continuous functions at the   nodes} $t_0,\dotsc,t_{N-1}.$ 

Now, we claim that the interpolant  $\widetilde U=I_{q-1} \widehat U\in \V_k^{\text{c}} (q-1)$  of the Lobatto collocation approximation
$\widehat U$  is the solution of a \emph{modified} continuous Galerkin (cG) method  in $\V_k^{\text{c}} (q-1),$ namely,
$\widetilde U\in \V_k^{\text{c}} (q-1)$ is such that
\begin{equation}
\label{eq:cG+Lob}
\int_{t_n}^{t_{n+1}}  \big( (\widetilde U' ,v )  + ( A\widetilde U ,v ) \big) \, \d t  
= \int_{J_n} (I_{q-1}  f,v)\, \d t \quad \forall v \in \P(q-2)
\end{equation}
for $n=0,\dotsc,N-1$, with the modification consisting in the fact that the forcing term
$f$ on the right-hand side has been replaced by its interpolant   $I_{q-1}  f.$
Compare to \eqref{eq:cG1} and notice that  \eqref{eq:cG+Lob} is a modification of the
cG$(q-1)$ rather than the cG$(q)$ method.


Now, \eqref{eq:Lob2} implies 
\begin{equation}
\label{eq:Lob3}
\int_{t_n}^{t_{n+1}}   \big( (\widehat U' ,v )  + ( A\widetilde U ,v ) \big) \, \d t  
= \int_{t_n}^{t_{n+1}} (I_{q-1}f,v)\, \d t \quad \forall v \in \P(q-2)
\end{equation}
for $n=0,\dotsc,N-1$.  In view of the fact that  $\widetilde U=I_{q-1} \widehat U,$ 
 \eqref{eq:cG+Lob} follows immediately from \eqref{eq:Lob2} provided
\begin{equation}
\label{eq:Lob4}
\int_{t_n}^{t_{n+1}}    ( \wU'-(I_{q-1} \widehat U )' ,v )\,\d t = 0  \quad \forall v \in \P(q-2) .
\end{equation}
Since $(I_{q-1} \widehat U ) (t_m)= \widehat U  (t_m) ,$ integrating by parts, we can rewrite  
\eqref{eq:Lob4} as
\begin{equation}
\label{eq:Lob5}
\int_{t_n}^{t_{n+1}}    ( \wU - I_{q-1} \widehat U,v' )\,\d t = 0
   \quad \forall v \in \P(q-2) .
\end{equation}
Now,  the integrand $\pi:=\big (\wU(\cdot)  - I_{q-1} \widehat U(\cdot) , v'(\cdot)  )$
in \eqref{eq:Lob5} is a polynomial of degree at most $2q-3;$ therefore, $\pi$ is integrated 
exactly by the Lobatto quadrature formula with $q$ nodes. Furthermore, $\pi$
vanishes at the quadrature nodes $t_{n1},\dotsc,t_{nq}$. Thus, 
\eqref{eq:Lob5}, and hence also \eqref{eq:Lob4}, is indeed valid.

It is straightforward now to apply the maximal regularity estimate for the continuous Galerkin method to conclude:

 \begin{proposition}[Maximal $L^2$ regularity of Lobatto IIIA methods]\label{Co:max-reg:GLob}
Let $I_{G, q-2} v \in \P(q-2)$ denote the interpolant of $v$ at the $q -1$ Gauss points of $J_n.$
 The Lobatto IIIA methods satisfy the following analogue of the continuous maximal $L^2$ 
 regularity property \eqref{max-reg}
\begin{equation}
\label{eq:GLo1}
\begin{split}
&|A^{1/2} U_m|^2+\|\widetilde  U'\|_{L^2((0,t_m);H)}^2+\| I_{G, q-2} A \widetilde U\|_{L^2((0,t_m);H)}^2   \\
&= |A^{1/2}U_0|^2+\|P_{q-2}I_{q-1} f\|_{L^2((0,t_m);H)}^2\\
\end{split}
\end{equation}
and 
\begin{equation}
\label{eq:GLo2}
\begin{split}
&|A^{1/2}U_m|^2+\|\widetilde  U'\|_{L^2((0,t_m);H)}^2+\| I_{G, q-2} A \widetilde U\|_{L^2((0,t_m);H)}^2   \\
&\leqslant  |A^{1/2} U_0|^2+\| I_{q-1} f\|_{L^2((0,t_m);H)}^2\\
\end{split}
\end{equation}
for $m=1,\dotsc,N.$ 
 \end{proposition}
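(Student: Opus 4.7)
The plan is to leverage the key observation established just before the statement: the interpolant $\widetilde U = I_{q-1}\widehat U \in \V_k^{\text{c}}(q-1)$ of the Lobatto collocation approximation is the solution of the modified cG$(q-1)$ problem \eqref{eq:cG+Lob} with forcing term $I_{q-1} f$ in place of $f$. Consequently, the Lobatto IIIA maximal regularity estimate reduces to an application of the cG maximal regularity result Proposition \ref{Co:max-reg:cG} with the degree parameter shifted from $q$ to $q-1$ and the data replaced by its collocation interpolant.

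First, I would apply Proposition \ref{Co:max-reg:cG} verbatim to $\widetilde U \in \V_k^{\text{c}}(q-1)$, viewed as the cG$(q-1)$ approximation associated with forcing term $\tilde f := I_{q-1} f$. This gives the equality
\begin{equation*}
|A^{1/2}\widetilde U(t_m)|^2 + \|\widetilde U'\|_{L^2((0,t_m);H)}^2 + \|P_{q-2} A \widetilde U\|_{L^2((0,t_m);H)}^2 = |A^{1/2}\widetilde U(0)|^2 + \|P_{q-2} I_{q-1} f\|_{L^2((0,t_m);H)}^2,
\end{equation*}
where $P_{q-2}$ is the piecewise $L^2$-projection onto $\H_k^{\text{d}}(q-2)$. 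Next, I would invoke the identity $P_{q-2}|_{\P(q-1)} = I_{G,q-2}$ (the Gauss interpolant at the $q-1$ Gauss points of $J_n$), which follows from the same argument given for \eqref{eq:GL1}: for $W\in\P(q-1)$ and $\varphi\in\P(q-2)$, the product $(W-I_{G,q-2}W,\varphi)$ is a polynomial of degree at most $2q-3$ vanishing at the $q-1$ Gauss nodes, hence integrated exactly to zero. Since $A\widetilde U \in \V_k^{\text{c}}(q-1)$, this gives $P_{q-2}A\widetilde U = I_{G,q-2}A\widetilde U$, yielding \eqref{eq:GLo1}.

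The identifications at the mesh points are immediate: since the Lobatto nodes satisfy $c_1=0$ and $c_q=1$, the interpolation conditions force $\widetilde U(t_m) = I_{q-1}\widehat U(t_m) = \widehat U(t_m) = U_m$ for each $m$, and similarly $\widetilde U(0) = U_0$. Substituting these into the displayed equation produces precisely \eqref{eq:GLo1}.

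Finally, for \eqref{eq:GLo2} I would bound the right-hand side by using the $L^2$-contractivity of the projection $P_{q-2}$, namely
\begin{equation*}
\|P_{q-2} I_{q-1} f\|_{L^2((0,t_m);H)} \leqslant \|I_{q-1} f\|_{L^2((0,t_m);H)},
\end{equation*}
which is the analogue of \eqref{eq:cG+dG}. Substituting this estimate into \eqref{eq:GLo1} yields \eqref{eq:GLo2}. The only conceptual subtlety, which is already handled in the paragraph preceding the statement, is the verification of \eqref{eq:Lob4}–\eqref{eq:Lob5} establishing that $\widetilde U$ actually solves the modified cG$(q-1)$ system; no additional obstacle arises in the proof itself, which is a clean reduction to the continuous Galerkin case.
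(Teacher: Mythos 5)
Your proposal is correct and follows essentially the same route as the paper: the paper reduces the Lobatto IIIA estimate to the maximal regularity of the (modified) cG$(q-1)$ method via the identity \eqref{eq:cG+Lob}, and then simply states that applying Proposition \ref{Co:max-reg:cG} concludes the argument. Your write-up supplies exactly the details the paper leaves implicit — the shift $q\to q-1$, the identification $P_{q-2}|_{\P(q-1)}=I_{G,q-2}$ by the Gauss-exactness argument of \eqref{eq:GL1}, the nodal identifications $\widetilde U(t_m)=U_m$, and the $L^2$-contractivity of $P_{q-2}$ for \eqref{eq:GLo2} — all of which are the intended steps.
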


\begin{remark}[The Trapezoidal Method]\label{Re:Trapezoidal}It is interesting to note that for the trapezoidal method, 
one is able to prove directly the estimate, 
\begin{equation}
\label{eq:Trap}
\begin{split}
|A^{1/2} U_m|^2&+ \sum_{n=0}^{m-1}  k_n  |\frac {U_{n+1}-U_{n }} {k_n} |^2
+ \sum_{n=0}^{m-1} {k_n}  |\frac {AU_{n+1}+AU_{n}} 2 | ^ 2 \\
& =|A^{1/2} U_0|^2+ \sum_{n=0}^{m-1} {k_n}  |\frac {f(t_{n+1})+f(t_{n })} 2 |^2
\end{split}
\end{equation}
which is identical to \eqref{eq:GLo1} for $q=2 .$ 
Hence, \eqref{eq:GLo1} is a natural but  non obvious generalization of \eqref{eq:Trap}. 
\end{remark}

\begin{remark}[Alternative version of \eqref{eq:Trap}]\label{Re:Trapezoidal2}
Maximal regularity estimates of the form
\begin{equation}
\label{eq:Trap-KLL}
 \sum_{n=0}^{m-1}  k  |\frac {U_{n+1}-U_n} {k} |^2
+ \sum_{n=1}^m k  |AU_n |^ 2 
\leqslant C \sum_{n=0}^m k  |f(t_n) |^2
\end{equation}
for the trapezoidal method for constant time steps, and for $U_0=0,$ are 
established in  \cite[Theorem 3.2]{kovacs2016stable}, actually for any 
$p\in (1,\infty)$ and for general UMD Banach spaces.
 An advantage of \eqref{eq:Trap} is that it holds as an equality
 and it is valid for arbitrary partitions.
 High order Lobatto IIA methods are not included in the analysis in  \cite[Theorem 3.2]{kovacs2016stable}.
\end{remark}

\begin{remark}[Equivalence between $\widehat U$ and $\widetilde U$]\label{Re:Lobatto}
If the Lobatto collocation approximation $\widehat U$ is available in a subinterval $\bar J_n=[t_n,t_{n+1}],$
then $\widetilde U\in \P(q-1)$ is obviously the interpolant of $\widehat U$ at the Lobatto nodes,
\begin{equation}
\label{eq:equivalence1}
\widetilde U(t_{ni})= \widehat U(t_{ni}),\quad i=1,\dotsc,q. 
\end{equation}
Conversely,  it $\widetilde U$ is available in $\bar J_n,$ then  the Lobatto collocation approximation $\widehat U\in \P(q)$
is uniquely determined by the interpolation conditions
\begin{equation}
\label{eq:equivalence2}
\widehat  U(t_{ni})= \widetilde U(t_{ni}),\quad i=1,\dotsc,q,
\quad \widehat  U'(t_n)=-A\widetilde  U(t_n)+f(t_n). 
\end{equation}
\end{remark}

\subsubsection{Maximal regularity of  algebraically stable Runge--Kutta methods}\label{SSe:Bstable}

Our main assumption on the Runge--Kutta method is that it is B-stable.
Since the collocation nodes $c_1,\dotsc,c_q$ are pairwise distinct,
it is well known that the B-stability is equivalent to the  \emph{algebraic stability}
 of the method; in other words, 
the weights $b_1,\dotsc,b_q$ are nonnegative and the $q\times q$ symmetric matrix $M$ 
with entries $m_{ij}:=b_ia_{ij}+b_ja_{ji}-b_ib_j, i,j=1,\dotsc,q,$
is positive semidefinite,
\begin{equation}\label{eq:alg-stab}
b_i\geqslant 0,\quad  i=1,\dotsc,q,\quad\text{and}\quad M\in\R^{q,q}\  \ \text{is positive semidefinite}.
\end{equation}
Notice also that, in the case of positive $c_1,$  the coefficient matrix $\AA:=(a_{ij})_{i,j=1,\dotsc,q}\in \R^{q,q}$ of
the Runge--Kutta method is invertible since the collocation nodes $c_1,\dotsc,c_q$ are pairwise distinct
and positive.

In the following  calculations we closely follow the proof that algebraically stable methods are B-stable.
With  $\varphi_j:= -k_n\big (A U_{nj} -f(t_{nj})\big )=-k_n\widehat U'(t_{nj})\in \D(A)$ (see \eqref{eq:coll-1} and  \eqref{eq:equivalent}),
we apply $A^{1/2}$  to  \eqref{eq:RK1} and write  it in the form
\begin{equation}\label{eq:Bstab1}
\left \{
\begin{alignedat}{2}
&A^{1/2}U_{ni}=A^{1/2}U_n+  A^{1/2}\sum_{j=1}^q a_{ij} \varphi_j,\quad &&i=1,\dotsc,q,\\
&A^{1/2}U_{n+1} =A^{1/2}U_n+  A^{1/2}\sum_{i=1}^q b_i \varphi_i. \quad &&
\end{alignedat}
\right . 
\end{equation}
We take the squares of the norms of both sides of the second relation of \eqref{eq:Bstab1},
and obtain 
\begin{equation}
\label{eq:Bstab2} 
|A^{1/2}U_{n+1}|^2 = |A^{1/2}U_n|^2 +2 \sum _{i=1}^q b_i(A^{1/2}\varphi_i, A^{1/2}U_n)
+ \sum _{i,j=1}^q b_ib_j( A^{1/2}\varphi_i,A^{1/2}\varphi_j) . 
\end{equation}
%
Using the first relations of \eqref{eq:Bstab1} we get
\[ \sum _{i=1}^q b_i(A^{1/2}\varphi_i, A^{1/2}U_n)=
\sum _{i=1}^q b_i(A^{1/2}\varphi_i, A^{1/2}U_{ni})-
   \sum _{i,j=1}^q b_ia_{ij} (A^{1/2}\varphi_i, A^{1/2}\varphi_j), \]
and \eqref{eq:Bstab2} leads to
\begin{equation*}
|A^{1/2}U_{n+1}|^2 = |A^{1/2}U_n|^2 - \sum _{i,j=1}^qm_{ij}( A^{1/2}\varphi_i,A^{1/2}\varphi_j)
+2 \sum _{i=1}^q b_i(\varphi_i, AU_{ni}), 
\end{equation*}
and, in view of the positive semidefiniteness of the matrix $M,$ to
\begin{equation}
\label{eq:Bstab3}
|A^{1/2}U_{n+1}|^2 \leqslant |A^{1/2}U_n|^2+2 \sum _{i=1}^q b_i(\varphi_i, AU_{ni}). 
\end{equation}
Replacing $\varphi_i$ by  $-k_n \big (A U_{ni} -f(t_{ni})\big )$ in the second term on the right-hand side,
we obtain
\[\begin{split}
(\varphi_i, AU_{ni})&=-k_n (AU_{ni}, AU_{ni})+k _n (f(t_{ni}), AU_{ni})\\
&=-k_n | AU_{ni}|^2+k(f(t_{ni}), AU_{ni});
\end{split}\]
thus, \eqref{eq:Bstab3} yields
\begin{equation}
\label{eq:Bstab4}
|A^{1/2}U_{n+1}|^2+2 k_n\sum _{i=1}^q b_i|AU_{ni}|^2
\leqslant |A^{1/2}U_n|^2+2 k_n\sum _{i=1}^q b_i(f(t_{ni}), AU_{ni}). 
\end{equation}
Using here the binomial identity
\[2(f(t_{ni}), AU_{ni})=-|AU_{ni}-f(t_{ni})|^2+|AU_{ni}|^2+|f(t_{ni})|^2=-|\widehat U'(t_{ni})|^2+|AU_{ni}|^2+|f(t_{ni})|^2,\]
we infer that
\begin{equation}
\label{eq:Bstab5} 
|A^{1/2}U_{n+1}|^2+ k_n\sum _{i=1}^q b_i|U'(t_{ni})|^2+ k_n\sum _{i=1}^q b_i|AU_{ni}|^2
\leqslant |A^{1/2}U_n|^2+ k_n \sum _{i=1}^q b_i|f(t_{ni})|^2. 
\end{equation}
Summing here over $n$ from $n=0$ to $n=m-1\leqslant N-1,$ we obtain the maximal regularity 
estimate
\begin{equation}
\label{eq:Bstab6} 
\begin{split}
|A^{1/2}U_m|^2&+  \sum _{n=0}^{m-1}k_n \sum _{i=1}^q b_i|\widehat U'(t_{ni})|^2+  \sum _{n=0}^{m-1}k_n \sum _{i=1}^q b_i|AU_{ni}|^2\\
&\leqslant |A^{1/2}U_0|^2+  \sum _{n=0}^{m-1}k_n \sum _{i=1}^q b_i|f(t_{ni})|^2,
\end{split} 
\end{equation}
$m=1,\dotsc,N,$ a discrete analogue of  \eqref{max-reg}.
Notice that \eqref{eq:Bstab6} reduces to  \eqref{eq:Ga-Ra6} for the Gauss and Radau IIA methods.

\section{General evolution problems and numerical results}\label{Se:5}

\subsection{Problem Setup}
\label{sec:orga758136}
In this section we present numerical results for the Runge--Kutta PINNs for both linear parabolic and wave equations. 
We would like to demonstrate that the resulting methods work as expected and in addition preserve the qualitative behavior of Runge--Kutta methods. 
We begin with the following general setup of the problem. Let
\(u:\varOmega\times (0,T] \rightarrow \mathbb R^M\), where \(T>0\) and \(u=u(x,t)\) is a vector-valued
function with \(M\) components. Let
\(A\) be a differential operator acting on \(u\) which involves
spatial derivatives. Our general initial value problem can be written as follows:
\begin{align}
  u_t  + A u &= f(x,t),\ (x,t)\in \varOmega \times [0,T],\\
u(0,x)&=u_0,\ x \in \varOmega, 
\end{align}
with additional boundary conditions for $t\in  [0,T].$ In the numerical experiments below the boundary conditions are either Neumann
 (for the heat equation) or Dirichlet (for the wave equation). The formulation of the methods can be directly extended to nonlinear evolution equations 
 with the obvious modifications in the loss functionals; see \cite{AMN2}.

\subsection{Collocation Runge--Kutta Formulation}
\label{sec:org18ce8ca}

Let the collocation points on \([t_n,t_{n+1}]\) be defined by \(t_{ni} = t_n + c_ik_n,\ i=1,\dotsc,q\), where \(0\leqslant c_1 < c_2<\dotsb< c_q \leqslant 1\),
and \(k_n=t_{n+1}-t_n\) is the width of each interval.

Furthermore, we let \(0=\tilde c_0 < \tilde c_1 < \dotsb < \tilde c_q=1\) be  auxiliary points as introduced in  Section \ref{SSe:2.4.2}, and
\begin{equation}
\hat u(x,t) = \hat I_q u(x,t) = \sum_{i=0}^q \tilde \ell_{ni}(t) u(x,\tilde
t_{ni}) = \sum_{i=0}^q \tilde \ell_i \left( \frac{t-t_n}{k_n}
  \right)u(x,\tilde t_{ni}),
\end{equation}
where \(\tilde t_{ni} = t_n + \tilde c_i k_n\). As it will become clear in Section 5.4, in the case of Radau and Lobatto methods, the points 
\(0=\tilde c_0 < \tilde c_1 < \dotsb < \tilde c_q=1\) include all the collocation points $c_1,\dotsc, c_q$.

Furthermore we need to interpolate at the collocation points the function $\hat u(x,t) , $ i.e., 
\begin{equation}
I_{q-1} L \hat u(x,t) = \sum_{j=1}^q \ell_{j}\left(\frac{t-t_n}{k_n}\right) \sum_{i=0}^q
\tilde \ell_{i}\left(\frac{t_{nj}-t_n}{k_n}\right) Lu(x,\tilde t_{ni}).
\end{equation}

Consider a fixed interval \([t_n,t_{n+1}]\). Recall that in the loss, one has to evaluate integrals of 
\begin{equation}
  \hat u_t(x,t)+ I_{q-1}  L \hat u(x,t) =: \zeta (x, t),  \, \quad\ t\in [t_n, t_{n+1}],\quad x\in \varOmega.
\end{equation}
At the colocation points \(\{t_{nj}\}_{j=1}^q\) it is straightforward to express the time derivative of \(\hat u\) and
the interpolant \(I_{q-1}  L \hat u(x,t)\) as follows:
\begin{equation}
\hat u_t(x,t_{nj}) = k_n^{-1} \sum_{i=0}^q \tilde \ell^\prime_i(c_j)u(x,\tilde
t_{ni}),\quad I_{q-1}L\hat u(x,t_{nj}) = \sum_{i=0}^q \tilde \ell_i(c_j) L u (x,\tilde t_{ni}).  
\end{equation}


Within the subinterval \([t_n, t_{n+1}]\), we have
\begin{equation}\label{formula_zeta_nj}
\begin{split}	
 \zeta (x, t_{nj})= & \hat u_t(x,t_{nj})+ I_{q-1}  L \hat u(x,t_{nj}) \\
  = & \sum_{i=0}^q \big(k_n^{-1} \tilde \ell_i^\prime(c_j) u(x,\tilde t_{nj}) -
  \tilde \ell_i(c_j) Lu(x,\tilde t_{nj})\big),\quad x \in \varOmega, 
\end{split}
\end{equation}
and since $ \hat u_t(x,t)+ I_{q-1}  L \hat u(x,t) $ is a polynomial of degree $q-1$
\begin{equation}\label{formula_zeta}
 \zeta (x, t)=  \hat u_t(x,t)+ I_{q-1}  L \hat u(x,t)  =  \sum_{j=1}^q \ell_{j}\left(\frac{t-t_n}{k_n}\right) \zeta (x, t_{nj}) ,\quad x \in \varOmega .
\end{equation}

\subsection{Artificial Neural Network Representation}
\label{sec:org75c7da6}
We approximate the solution of our problem within the neural network function space.
The objective is to find \(\theta^\star\in \Theta\) such that the \(m\)-th output \(U_m\) of the neural
network approximates the target function \(u_m(\cdot,\tilde t_{ni})\). Specifically, we require 
that the neural network output satisfies the following approximation
\[U_m(x,\tilde t_{ni};\theta^\star)\approx u_{m}(x,\tilde t_{ni}),\quad n=0,\dotsc,N,\quad
  i=1,\dotsc,q,\quad x \in \varOmega.\]
We denote the time-space variable as \(y = (x,t)\in \mathbb{R}^{d+1}\). We employ
the deep residual ANN architecture proposed by Sirignano and
Spiliopoulos \cite{SSpiliopoulos:2018}.
More specifically, for a time-space input \(y\), we define: 
\begin{align*} 
S^0 &= \tanh(W^{\mathrm{in}} y + b^{\mathrm{in}}),  \\
\quad \text{ DGM layer}\\
|\quad G^\ell &= \tanh(V^{g,\ell} y + W^{g,\ell} S^{\ell-1} + b^{g,\ell}),\ \ell = 1,\dotsc, L \\
|\quad  
Z^\ell &= \tanh(V^{z,\ell} y + W^{z,\ell} S^{\ell-1} + b^{z,\ell}),\ \ell = 1,\dotsc, L \\
|\quad R^\ell &= \tanh(V^{r,\ell} y + W^{r,\ell} S^{\ell-1} + b^{r,\ell}),\ \ell = 1,\dotsc, L \\
|\quad\! H^\ell &= \tanh(V^{h,\ell} y + W^{h,\ell}( S^{\ell-1} \odot R^{\ell}) + b^{h,\ell}),\ \ell = 1,\dotsc, L \\
 \lfloor\quad S^{\ell} &= (1-G^{\ell})\odot H^{\ell} + Z^{\ell}\odot S^{\ell-1},\ \ell = 1,\dotsc, L\\
U(y;\theta) & = W^{\mathrm{out}}S^L + b^{\mathrm{out}},
\end{align*}
where \(L\) denotes the number of hidden layers and \(\odot\) represents the Hadamard product. 
The trainable parameters \(\theta\) of the model are:
\begin{equation}
  \theta = \{W^{\mathrm{in}},b^{\mathrm{in}}, (V^{\star,\ell},W^{\star,\ell},b^{\star,\ell})_{\ell=1,\dotsc,L}^{\star\in\{g,z,r,h\}},W^{\mathrm{out}},b^{\mathrm{out}}\}.
\end{equation}

To train the neural network, we compute discrete approximations of
the cost functional based on samples from \(\varOmega\). For simplicity we consider $f=0;$ the modifications being obvious otherwise. 
 The discrete cost functional is computed as follows:
\begin{equation}
\mathcal C_\varOmega[\theta] = \frac{\mathrm{Vol}(\varOmega)}{R}\sum_{m=1}^M\sum_{r=1}^R\sum_{n=1}^N \int _ {J_n}    \zeta_m (x_r, t)^2\, \d t,
\end{equation}
where $\zeta_m $ denotes the \(m\)-th component of \(\zeta\) which is given by \eqref{formula_zeta} and
the set of points \(\{x_r\}_{r=1}^R\) is generated using Sobol's
low-discrepancy sequences; see, e.g., \cite{Caflisch_review_1998}.  This sampling approach is more
efficient for achieving a uniform coverage of the space in higher--dimensonal settings.
The integral in time is computed exactly by applying integration rules which are exact for  polynomials of degree $2q-2.$
In the case where we have Gauss or Radau collocation points this integral is just 
\begin{equation}
 \int _ {J_n}    \zeta (x_r, t)  ^2\, \d t 
 =  k_n \sum_{j=1}^q w_j  \zeta (x_r, t_{nj}) ^2,
\end{equation}
 where $ w_j= \int_0^1 \ell _j (\tau)\, \d \tau\, ,$ and $\zeta (x, t_{nj}) $ is given by \eqref{formula_zeta_nj}.

The following are the discrete cost functionals for the initial and boundary conditions.
\begin{equation}
\mathcal C_0[\theta] = \frac{\mathrm{Vol}(\varOmega)}{R}\sum_{m=1}^M\sum_{r=1}^R( u_m(0,x_r;\theta) - u_{m0}(x_r))^2
\end{equation}
\begin{equation}
\mathcal C_{\partial\varOmega_s}[\theta] = \frac{\mathrm{Vol}(\partial\varOmega_s)}{R}\sum_{m=1}^M\sum_{{r'}=1}^R
\sum_{n=1}^N (u_m(t_{n},x_{r'}) -u_{ms}(x_{r'} ))^2.
\end{equation}
where  $x_{r'}$ are Sobol points on the boundary ${\partial \varOmega_s}.$ In case we use other than Dirichlet boundary conditions, this term is modified accordingly.
The deep learning approximation of the problem is characterized by the minimization of the sum of the  cost functionals  with respect to model parameters \(\theta\),
\begin{equation}
\theta^\star \leftarrow \min_{\theta\in \Theta} (\mathcal C_\varOmega + \mathcal C_0 + \sum_s C_{\partial \varOmega_s})[\theta].
\end{equation}

\subsection{Applications}
\label{sec:org576a064}
In this section we will apply our Runge--Kutta ANN schemes to heat diffusion
and wave propagation initial value problems.

We will apply four alternative time--sampling approaches: three based on
collocation Runge-Kutta methods and, for comparison, a uniform
time--sampling scheme.
Specifically, we employ the following schemes:
\begin{itemize}
\item Gauss: \((c_1,c_2,c_3)=\left(0.5-\sqrt{15}/10,\ 0.5,\
    0.5+\sqrt{15}/10\right)\).
\item Lobatto IIIA: \((c_1,c_2,c_3)=(0,\  0.5,\  1)\).
\item Radau IIA: \((c_1,c_2,c_3) = \left((4-\sqrt{6})/10,\  (4+\sqrt{6})/10, 1\right)\).
\item Uniform Sampling.
\end{itemize}
The auxiliary modes for these collocation schemes are chosen as follows:
\begin{itemize}
\item Gauss and Lobatto IIIA: 
\((\tilde c_0,\tilde c_1, \tilde c_2, \tilde c_3) = (0,0.25,0.5,1)\),
\item Radau IIA: \((\tilde c_0,\tilde c_1, \tilde c_2, \tilde c_3) = \left(0,(4-\sqrt{6})/10,\  (4+\sqrt{6})/10, 1\right)\).
\end{itemize}
Notice that, in the case of  Radau and Lobatto methods, the points 
\(0=\tilde c_0 < \tilde c_1 <\tilde c_2 < \tilde c_3=1\) include  the collocation points $c_1,\dotsc, c_3$.
To ensure a fair comparison, we use four times the value of \(n\) for the uniform sampling case compared 
to the Runge--Kutta collocation schemes with \(q=3\).

In both of our applications, we utilized artificial neural networks as
described earlier with four hidden layers, each consisting of \(20\)
nodes. The training process employed the Adam optimizer with a
learning rate of \(3\cdot10^{-4}\). The models were trained for \(20000\)
epochs for the heat equation and \(100000\) epochs for the wave equation. For the time discretization, we used \(40\) nodes, while the
spatial grid was generated using Sobol's sequences with 256 points per
epoch. These hyperparameters were carefully selected to balance computational efficiency with the accuracy of the neural network approximations of the solutions.

\subsubsection{Heat Equation}
\label{sec:org780d570}

We consider an initial value diffusion problem on \(\varOmega \in
(0,1)^2\) subject to Neumann boundary conditions:
\begin{align}
    u_{t} - k (u_{xx} + u_{yy}) &= 0, \quad t \in [0,1],\ (x,y) \in \varOmega,\ k=0.02, \\
    u(0, x, y) &= \left(0.5 + 0.5\cos\left(10\pi \sqrt{(x - 0.6)^2 + (y - 0.7)^2}\right)\right) \chi_{D}(x, y), \\
    \frac{\partial u}{\partial n}&= 0, \quad \text{on }  \partial \varOmega \times (0,1),
\end{align}
where \(\chi_D\) is the characteristic function of the disk $D,$ 
\[D:=\{(x,y)\in \R^2: (x - 0.6)^2 + (y - 0.7)^2 < 0.01\}.\]
%
Note that the initial value is nonzero inside the disk $D$ and zero outside $D.$

Fig.\ \ref{fig:orgf92d926} shows the absolute errors at the final time $t=1$ comparing the different schemes.

\begin{figure}[htbp]
\centering
\includegraphics[width=1.\linewidth]{./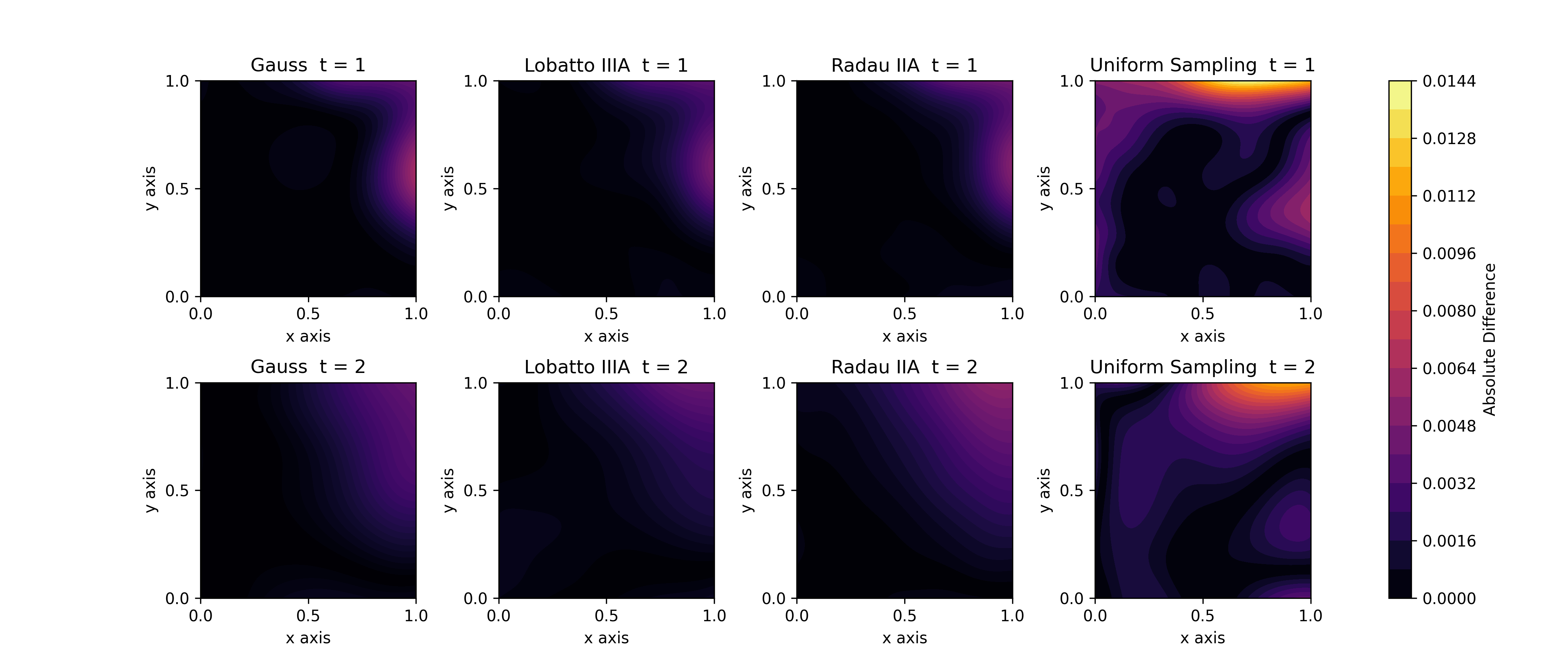}
\caption{\label{fig:orgf92d926}Absolute misfit across various  schemes}
\end{figure}
As is well known, the total heat, in the system remains conserved,  i.e., 
\[\int _{\varOmega} u (x, t )\, \d x = \int _{\varOmega} u (x, 0 )\, \d x\, .\]
   The
conservation arises form the Neumann boundary conditions, which implies that there is no heat flow across the boundaries.
In Fig.\ \ref{fig:org75fd584}, we compare the various schemes  with respect to heat conservation.

\begin{figure}[htbp]
\centering
\includegraphics[width=1.\linewidth]{./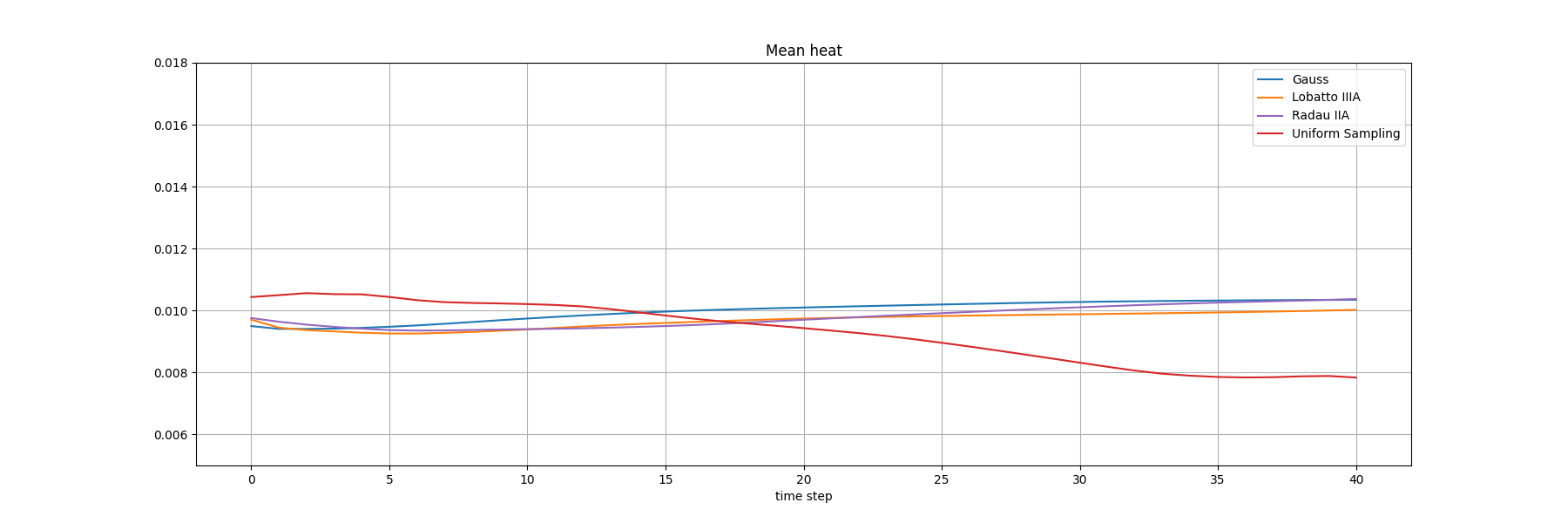}
\caption{\label{fig:org75fd584}Heat conservation (plot of $H(t)=\int _{\varOmega} u (x, t )\, \d x $)  across
  various  schemes.}
\end{figure}

\subsubsection{Heat Equation with a Discontinuous Initial Value}
We replace the initial value from the previous test case with a discontinuous one. Specifically, we consider
\begin{equation}
u(0, x, y) = \chi_D(x, y), 
\end{equation}
 the characteristic function of the disk $D,$ 
\[D:=\{(x,y)\in \R^2: (x - 0.6)^2 + (y - 0.7)^2 < 0.01\}.\]
%
Fig.\ \ref{fig:ex2sol} shows the absolute errors where comparing the collocation schemes.
\begin{figure}[htbp]
\centering
\includegraphics[width=1.\linewidth]{./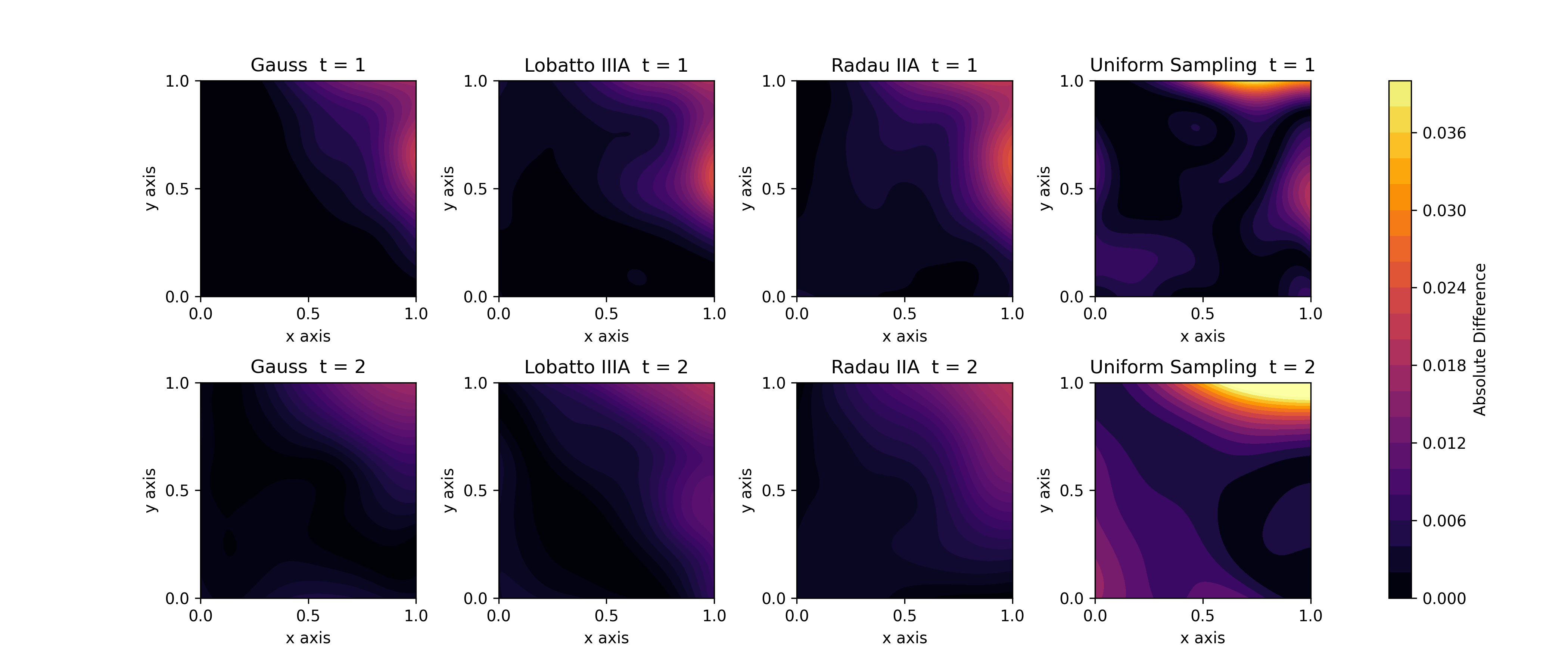}
\caption{\label{fig:ex2sol}Absolute misfit across various iterative schemes}
\end{figure}
The heat equation exhibits a notable smoothing property, which is not always maintained by various time 
discretization methods; see, e.g., \cite{Luskin_Ran_CN_1982}, \cite{Thomee_book_2006}. 
The smoothing behavior of time discretizations is a quite subtle topic, as it can influence the performance 
of methods when both diffusion and transport phenomena are present, particularly in the context of nonlinear 
PDEs and Navier--Stokes equations; see \cite{Glow_theta1987}. Although smoothing eventually occurs primarily 
due to the diffusion induced by the Quasi-Monte Carlo method used for spatial discretization, it is worth noting, 
as shown in Fig.\ \ref{smoothing}, that the behavior of various discretization methods aligns with the known 
predictions of the corresponding Runge--Kutta schemes. In Fig.\ \ref{fig:ex2energy}, we compare the various 
schemes with respect to heat conservation.
\begin{figure}[htbp]
\centering
\includegraphics[width=1.\linewidth]{./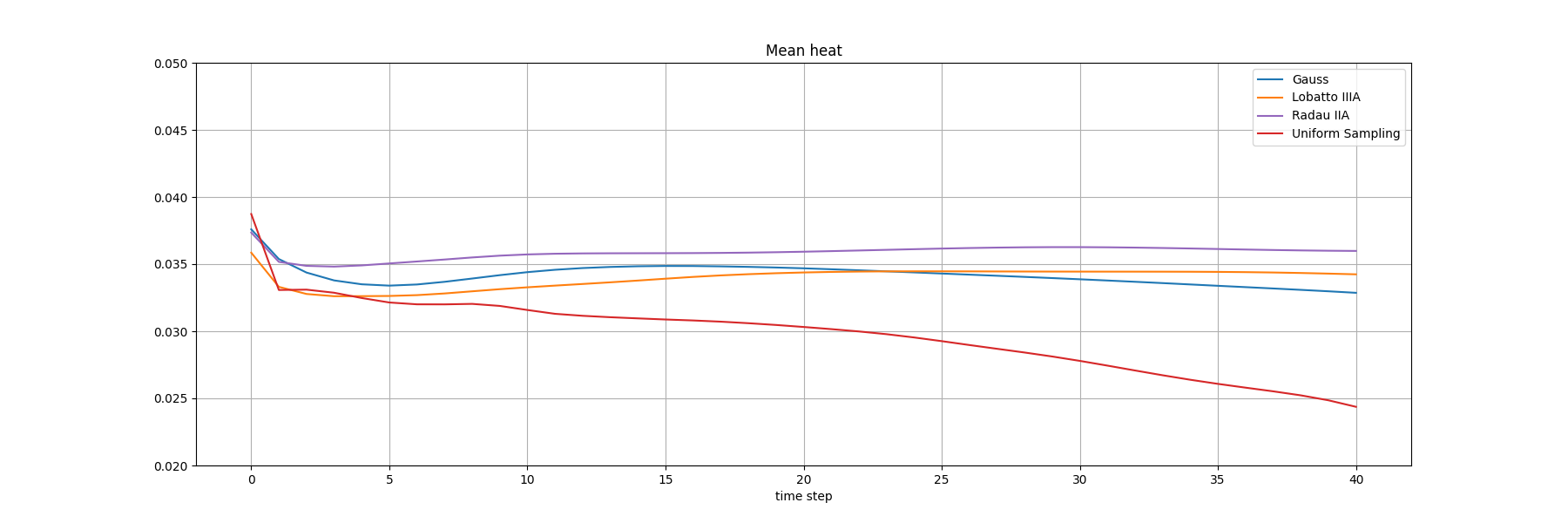}
\caption{\label{fig:ex2energy}Heat conservation across
  various iterative schemes.}
\end{figure}
\begin{figure}[htbp]
\centering
\includegraphics[width=1.\linewidth]{./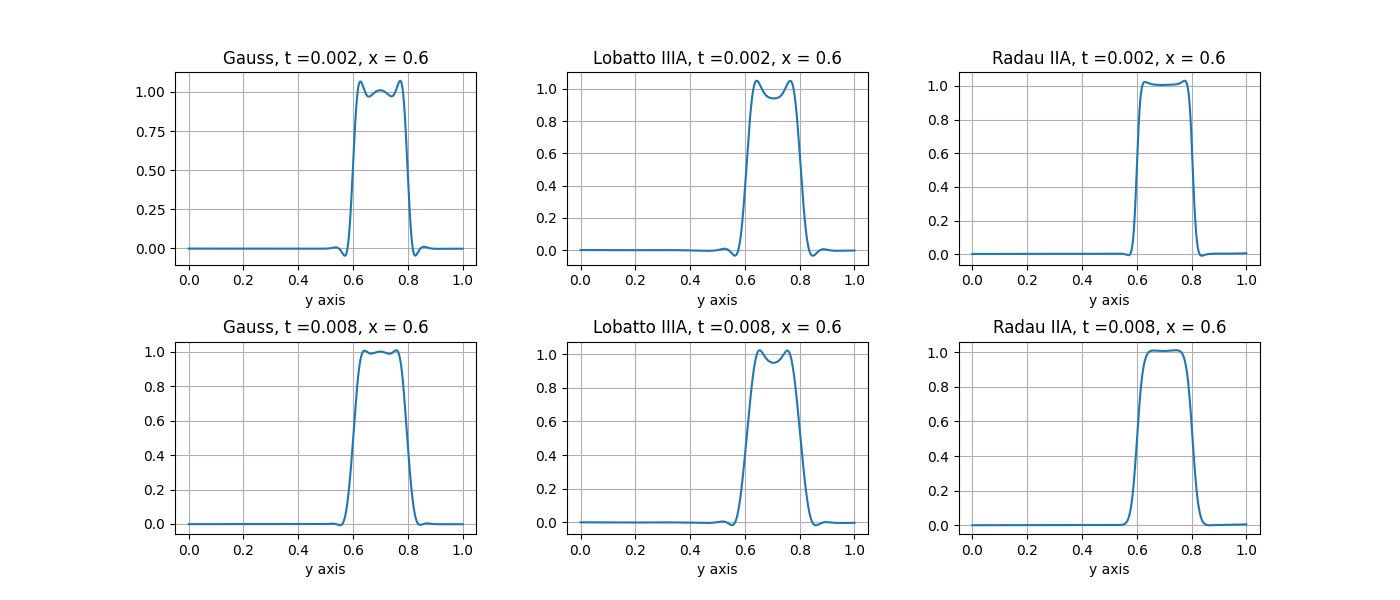}
\caption{\label{smoothing} Smoothing effect for the heat equation. As expected, Gauss and Lobatto methods have 
oscillating behavior close to initial times for discontinuous data. The full smoothing effect of Radau methods is evident.}
\end{figure}

\subsubsection{Wave Equation}\label{sec:orgc685aab}
As a third example, we consider an initial value wave propagation
problem on \(\varOmega:=(0,1)^2\) with homogeneous Dirichlet boundary conditions:
\begin{align}
    u_{tt} - c^2 (u_{xx} + u_{yy}) &= 0, \quad t \in [0,1],\ (x,y)\in \varOmega,\ c=0.5, \\
    u(0, x, y) &= \left(0.5 + 0.5\cos\left(4\pi \sqrt{(x - 0.3)^2 + (y - 0.5)^2}\right)\right) \chi_D(x, y), \\
    u_t(0, x, y) &= 0,\quad (x,y)\in \varOmega,\\
    u &= 0, \quad \text{on } \partial \varOmega \times (0,1)
\end{align}
where  $\chi_D$ is the characteristic function of the disk $D,$
\[D:=\{(x,y)\in \R^2: (x - 0.3)^2 + (y - 0.5)^2 \leqslant 0.25^2\}.\]
%
As in the previous application, the support of the initial field is
a closed disk.

To reformulate this as a system of first-order equations, we introduce
an auxiliary variable \(v\), representing the velocity, \(v:=u_t\).
The corresponding system of first-order equations is given as follows:
\begin{equation}
\begin{pmatrix} u \\ v \end{pmatrix}_t + \begin{pmatrix} 0 & -I \\ -c^2\varDelta & 0 \end{pmatrix} 
\begin{pmatrix} u \\ v \end{pmatrix} = \begin{pmatrix} 0 \\ 0\end{pmatrix},
\end{equation}
where \(I\) stands for the identity operator.

For each sampling approach,
Fig.\ \ref{fig:orgc63e982} illustrates the absolute difference between
the solution estimate
at time \(t=1\) and the corresponding estimate
delivered using the method of separation of variables, with 8 terms retained in the summation.
\begin{figure}[htbp]
\centering
\includegraphics[width=\textwidth]{./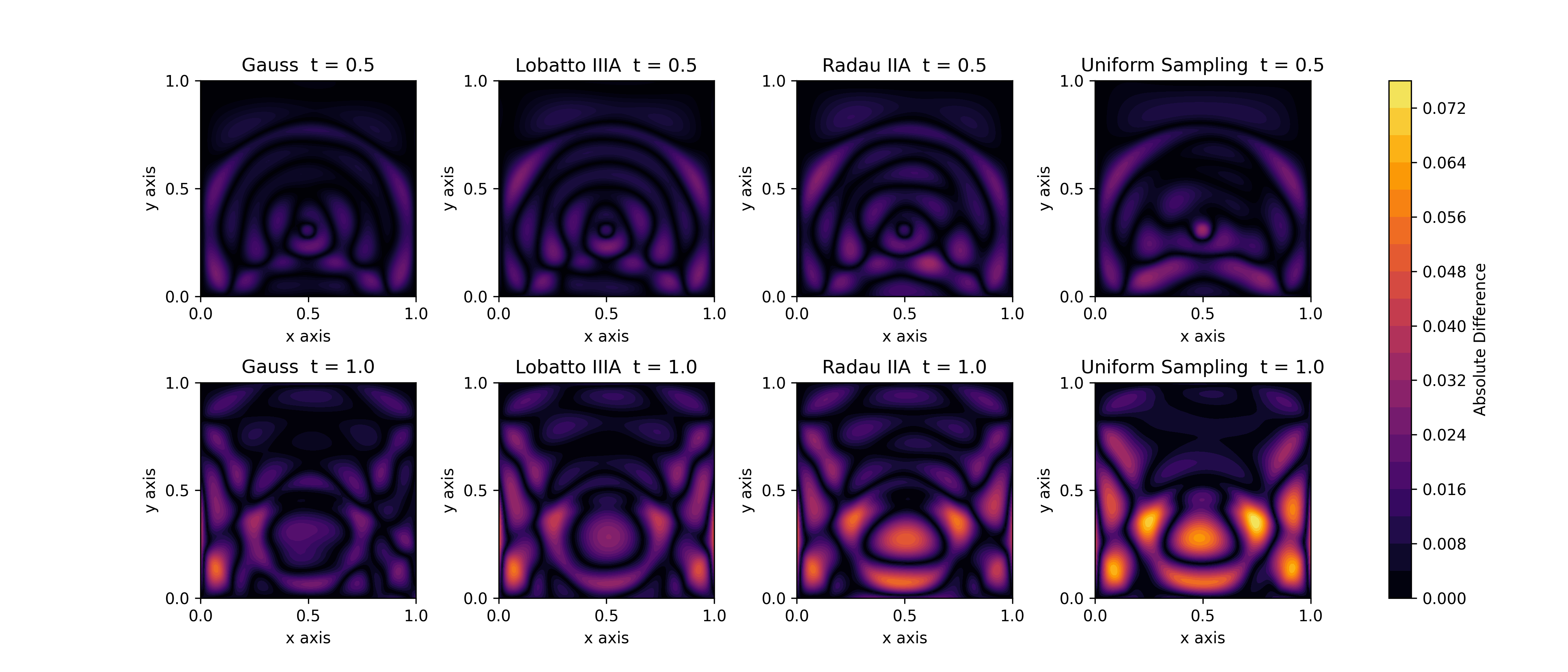}
\caption{\label{fig:orgc63e982}Absolute misfit for the various iterative schemes}
\end{figure}
The total energy of the system is given by 
\begin{equation}
\frac{1}{2}\left \|u_t(t,\cdot)\right \|^2 + \frac{1}{2}c^2 \left \| \nabla u(t,\cdot)\right \|^2 
\end{equation}
with $\|\cdot\|$ the $L^2(\varOmega)$-norm.

Since the total energy is conserved over time, Fig.\ \ref{fig:org7cafb74} presents a comparison of the system's 
energy at each time step for the  schemes under evaluation. While all methods exhibit some diffusion due to 
the Quasi-Monte Carlo method used for spatial discretization, it is evident that the Gauss and Lobatto methods 
perform as expected. These methods stand out as the optimal choice when energy conservation and high accuracy 
are priorities. Interestingly, the full sampling method shows the highest level of diffusive behavior.

%
\begin{figure}[htbp]
\centering
\includegraphics[width=1.1\linewidth]{./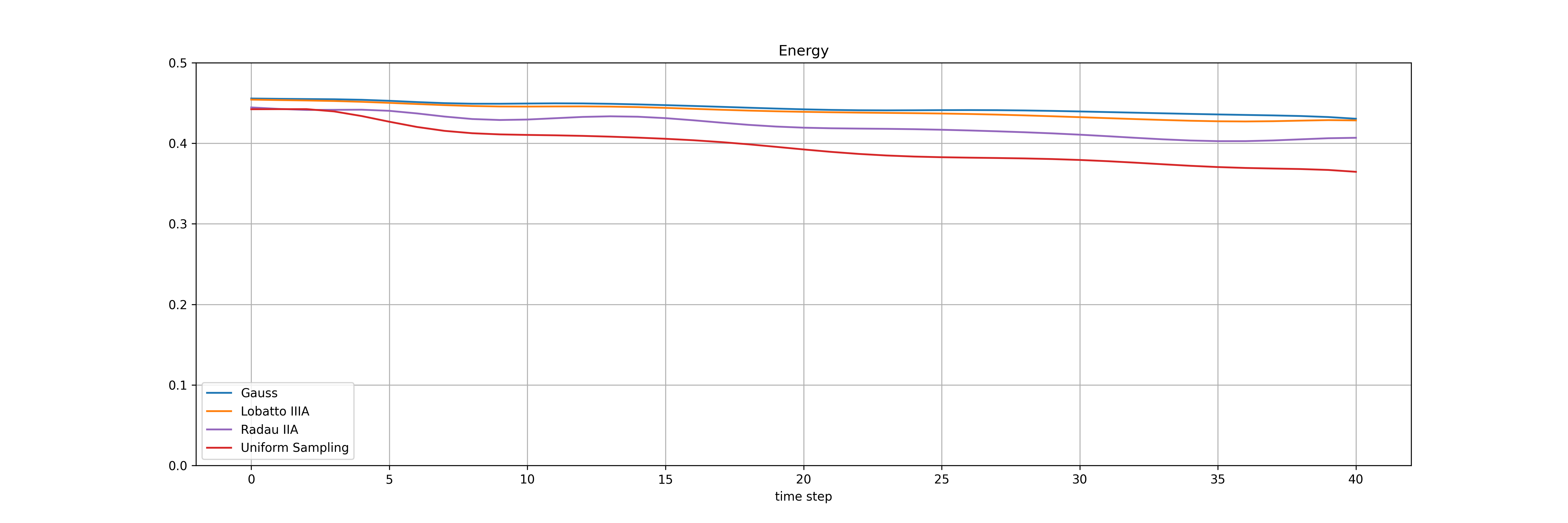}
\caption{\label{fig:org7cafb74}Energy conservation across the various
   schemes. Gauss and Lobatto methods have superior conservation properties. }
\end{figure}

\printbibliography

@article {KK_J_MaxmalReg_DG_2024,
    AUTHOR = {Kashiwabara, Takahito and Kemmochi, Tomoya},
     TITLE = {Discrete maximal regularity for the discontinuous {G}alerkin
              time-stepping method without logarithmic factor},
   JOURNAL = {SIAM J. Numer. Anal.},
  FJOURNAL = {SIAM Journal on Numerical Analysis},
    VOLUME = {62},
      YEAR = {2024},
    NUMBER = {4},
     PAGES = {1638--1659},
      ISSN = {0036-1429,1095-7170},
   MRCLASS = {65M60 (65M12 65M15)},
  MRNUMBER = {4775682},
       DOI = {10.1137/23M1580802},
       URL = {https://doi.org/10.1137/23M1580802},
}

@incollection {Caflisch_review_1998,
    AUTHOR = {Caflisch, Russel E.},
     TITLE = {Monte {C}arlo and quasi-{M}onte {C}arlo methods},
 BOOKTITLE = {Acta Numerica, 1998},
    SERIES = {Acta Numer.},
    VOLUME = {7},
     PAGES = {1--49},
 PUBLISHER = {Cambridge Univ. Press, Cambridge},
      YEAR = {1998},
      ISBN = {0-521-64316-3},
   MRCLASS = {65C05 (76M35 76P05)},
  MRNUMBER = {1689431},
MRREVIEWER = {J.\ Spanier},
       DOI = {10.1017/S0962492900002804},
       URL = {https://doi.org/10.1017/S0962492900002804},
}

@article {Luskin_Ran_CN_1982,
    AUTHOR = {Luskin, Mitchell and Rannacher, Rolf},
     TITLE = {On the smoothing property of the {C}rank-{N}icolson scheme},
   JOURNAL = {Applicable Anal.},
  FJOURNAL = {Applicable Analysis. An International Journal},
    VOLUME = {14},
      YEAR = {1982/83},
    NUMBER = {2},
     PAGES = {117--135},
      ISSN = {0003-6811},
   MRCLASS = {65M10},
  MRNUMBER = {678498},
MRREVIEWER = {Nabil\ R.\ Nassif},
       DOI = {10.1080/00036818208839415},
       URL = {https://doi.org/10.1080/00036818208839415},
}

@incollection{KuW,
	author = {Kunstmann, Peer C. and Weis, Lutz},
	booktitle = {Functional analytic methods for evolution equations},
	date-added = {2024-12-04 12:07:20 +0200},
	date-modified = {2024-12-04 12:08:20 +0200},
	doi = {10.1007/978-3-540-44653-8\_2},
	isbn = {3-540-23030-0},
	mrclass = {47D06 (34G10 35D10 35J55 35K20 35K90 42B20 47A60)},
	mrnumber = {2108959},
	mrreviewer = {Xuan\ Thinh\ Duong},
	pages = {65--311},
	publisher = {Springer, Berlin},
	series = {Lecture Notes in Math.},
	title = {Maximal {$L_p$}-regularity for parabolic equations, {F}ourier multiplier theorems and {$H^\infty$}-functional calculus},
	url = {https://doi.org/10.1007/978-3-540-44653-8_2},
	volume = {1855},
	year = {2004},
	bdsk-url-1 = {https://doi.org/10.1007/978-3-540-44653-8_2},
	bdsk-url-2 = {https://doi.org/10.1007/978-3-540-44653-8%5C_2}}

@article{GuillouSoule_1969,
	author = {Guillou, A. and Soul\'e, J. L.},
	date-added = {2024-12-03 22:23:06 +0200},
	date-modified = {2024-12-03 22:23:35 +0200},
	fjournal = {Revue Fran\c aise d'Informatique et de Recherche Op\'erati- onnelle},
	issn = {0035-3035,2669-3224},
	journal = {Rev. Fran\c caise Informat. Recherche Op\'erati\-onnelle},
	mrclass = {65.61},
	mrnumber = {280008},
	mrreviewer = {D.\ F.\ Mayers},
	number = {S\'er},
	pages = {17--44},
	title = {La r\'esolution num\'erique des probl\`emes diff\'erentiels aux conditions initiales par des m\'ethodes de collocation},
	volume = {3},
	year = {1969}}

@article{Wright_1970,
	author = {Wright, K.},
	date-added = {2024-12-03 22:19:01 +0200},
	date-modified = {2024-12-03 22:19:25 +0200},
	doi = {10.1007/bf01936868},
	fjournal = {Nordisk Tidskrift for Informationsbehandling},
	issn = {0901-246X},
	journal = {Nordisk Tidskr. Informationsbehandling (BIT)},
	mrclass = {65.61},
	mrnumber = {266439},
	mrreviewer = {M.\ Lotkin},
	pages = {217--227},
	title = {Some relationships between implicit {R}unge-{K}utta, collocation {L}anczos {$\tau $} methods, and their stability properties},
	url = {https://doi.org/10.1007/bf01936868},
	volume = {10},
	year = {1970},
	bdsk-url-1 = {https://doi.org/10.1007/bf01936868}}

@book{Thomee_book_2006,
	author = {Thom\'ee, Vidar},
	date-added = {2024-12-03 22:10:07 +0200},
	date-modified = {2024-12-03 22:10:27 +0200},
	edition = {Second ed.},
	isbn = {978-3-540-33121-6; 3-540-33121-2},
	mrclass = {65-02 (65M15 65M60)},
	mrnumber = {2249024},
	pages = {xii+370},
	publisher = {Springer-Verlag, Berlin},
	series = {Springer Series in Computational Mathematics},
	title = {Galerkin finite element methods for parabolic problems},
	volume = {25},
	year = {2006}}

@article{Butcher_1964,
	author = {Butcher, J. C.},
	date-added = {2024-12-03 22:08:12 +0200},
	date-modified = {2024-12-03 22:08:27 +0200},
	doi = {10.2307/2003405},
	fjournal = {Mathematics of Computation},
	issn = {0025-5718,1088-6842},
	journal = {Math. Comp.},
	mrclass = {65.61},
	mrnumber = {159424},
	mrreviewer = {I.\ M.\ Longman},
	pages = {50--64},
	title = {Implicit {R}unge-{K}utta processes},
	url = {https://doi.org/10.2307/2003405},
	volume = {18},
	year = {1964},
	bdsk-url-1 = {https://doi.org/10.2307/2003405}}

@article{Crouzeix_2,
	author = {Crouzeix, Michel and Raviart, Pierre-Arnaud},
	date-added = {2024-12-03 22:05:31 +0200},
	date-modified = {2024-12-03 22:05:44 +0200},
	fjournal = {Comptes Rendus Hebdomadaires des S\'eances de l'Acad\'emie des Sciences. S\'eries A et B},
	issn = {0151-0509},
	journal = {C. R. Acad. Sci. Paris S\'er. A-B},
	mrclass = {65L05 (34G05 65J05)},
	mrnumber = {426434},
	mrreviewer = {Frank\ G.\ Hagin},
	number = {6},
	pages = {Aiv, A367--A370},
	title = {Approximation des \'equations d'\'evolution lin\'eaires par des m\'ethodes \`a{} pas multiples},
	volume = {28},
	year = {1976}}

@incollection{Crouzeix_1,
	author = {Crouzeix, Michel},
	booktitle = {Computing methods in applied sciences and engineering ({S}econd {I}nternat. {S}ympos., {V}ersailles, 1975), {P}art 1},
	date-added = {2024-12-03 22:03:11 +0200},
	date-modified = {2024-12-03 22:03:35 +0200},
	mrclass = {65N30 (65L05)},
	mrnumber = {468231},
	mrreviewer = {M.\ Zl\'amal},
	pages = {206--223},
	publisher = {Springer, Berlin-New York},
	series = {Lecture Notes in Econom. and Math. Systems},
	title = {Sur les m\'ethodes de {R}unge {K}utta pour l'approximation des probl\`emes d' \'evol\-ution},
	volume = {Vol. 134},
	year = {1976}}

@incollection{Glow_theta1987,
	author = {Bristeau, M. O. and Glowinski, R. and P\'eriaux, J.},
	booktitle = {Finite elements in physics ({L}ausanne, 1986)},
	date-added = {2024-12-03 21:30:44 +0200},
	date-modified = {2024-12-03 21:31:18 +0200},
	doi = {10.1007/978-3-322-87873-1},
	isbn = {0-444-87067-9},
	mrclass = {65M05 (76-08)},
	mrnumber = {913308},
	pages = {73--187},
	publisher = {North-Holland, Amsterdam},
	title = {Numerical methods for the {N}avier-{S}tokes equations. {A}pplications to the simulation of compressible and incompressible viscous flows},
	url = {https://doi.org/10.1007/978-3-322-87873-1},
	year = {1987},
	bdsk-url-1 = {https://doi.org/10.1007/978-3-322-87873-1}}

@book{HairerNW1993,
	author = {Hairer, E. and N\o rsett, S. P. and Wanner, G.},
	date-added = {2024-12-03 21:15:24 +0200},
	date-modified = {2024-12-03 21:15:53 +0200},
	edition = {Second},
	isbn = {3-540-56670-8},
	mrclass = {65-02 (34A50 65L99)},
	mrnumber = {1227985},
	note = {Nonstiff problems},
	pages = {xvi+528},
	publisher = {Springer-Verlag, Berlin},
	series = {Springer Series in Computational Mathematics},
	title = {Solving ordinary differential equations. {I}},
	volume = {8},
	year = {1993}}

@inbook{Hairer2002,
	abstract = {After having seen in Chap. I some simple numerical methods and a variety of numerical phenomena that they exhibited, we now present more elaborate classes of numerical methods. We start with Runge-Kutta and collocation methods, and we introduce discontinuous collocation methods, which cover essentially all high-order implicit Runge-Kutta methods of interest. We then treat partitioned Runge-Kutta methods and Nystr{\"o}m methods, which can be applied to partitioned problems such as Hamiltonian systems. Finally we present composition and splitting methods.},
	address = {Berlin, Heidelberg},
	author = {Hairer, Ernst and  Lubich, Christian and Wanner, Gerhard},
	booktitle = {Geometric Numerical Integration: Structure-Preserving Algorithms for Ordinary Differential Equations},
	date-added = {2024-12-03 21:07:47 +0200},
	date-modified = {2024-12-03 21:10:16 +0200},
	doi = {10.1007/978-3-662-05018-7_2},
	isbn = {978-3-662-05018-7},
	publisher = {Springer Berlin Heidelberg},
	title = {Numerical Integrators: Geometric Numerical Integration: Structure-Preserving Algorithms for Ordinary Differential Equations},
	url = {https://doi.org/10.1007/978-3-662-05018-7_2},
	year = {2002},
	bdsk-url-1 = {https://doi.org/10.1007/978-3-662-05018-7_2}}

@article{BUTCHER1996113,
	abstract = {This centenary history of Runge-Kutta methods contains an appreciation of the early work of Runge, Heun, Kutta, and Nystr{\"o}m and a survey of some significant developments of these methods over the last hundred years. In particular, the order conditions, as they are now understood, will be outlined, as will the introduction and practical implementation of implicit Runge-Kutta methods, the use of linear and nonlinear stability analysis in the assessment of Runge-Kutta methods, and the theory and applications of the composition of methods. Of the many further developments that have arisen, most are discussed only briefly; however, the recent interest in Runge-Kutta methods for Hamiltonian systems will be surveyed in more detail.},
	author = {J.C. Butcher and G. Wanner},
	date-added = {2024-12-03 21:01:20 +0200},
	date-modified = {2024-12-03 21:33:35 +0200},
	doi = {https://doi.org/10.1016/S0168-9274(96)00048-7},
	issn = {0168-9274},
	journal = {Applied Numerical Mathematics},
	keywords = {Runge-{K}utta methods, History, Order conditions, Stability, Order reduction, Composition, Hamiltonian systems},
	note = {Special Issue Celebrating the Centenary of Runge-Kutta Methods},
	number = {1},
	pages = {113-151},
	title = {Runge-{K}utta methods: some historical notes},
	url = {https://www.sciencedirect.com/science/article/pii/S0168927496000487},
	volume = {22},
	year = {1996},
	bdsk-url-1 = {https://www.sciencedirect.com/science/article/pii/S0168927496000487},
	bdsk-url-2 = {https://doi.org/10.1016/S0168-9274(96)00048-7}}

@misc{grekas2024deepritzfinite,
	archiveprefix = {arXiv},
	author = {Georgios Grekas and Charalambos G. Makridakis},
	date-added = {2024-12-03 20:24:13 +0200},
	date-modified = {2024-12-03 20:24:13 +0200},
	eprint = {2409.08362},
	howpublished = {arXiv2409.08362},
	primaryclass = {math.NA},
	title = {Deep {R}itz -- {F}inite {E}lement methods: Neural Network Methods trained with Finite Elements},
	url = {https://arxiv.org/abs/2409.08362},
	year = {2024},
	bdsk-url-1 = {https://arxiv.org/abs/2409.08362}}

@misc{makr_pryer2024deepUz,
	archiveprefix = {arXiv},
	author = {Charalambos G. Makridakis and Aaron Pim and Tristan Pryer},
	date-added = {2024-12-03 20:22:26 +0200},
	date-modified = {2024-12-03 20:22:54 +0200},
	eprint = {2410.17359},
	howpublished = {arXiv2410.17359},
	primaryclass = {math.NA},
	title = {Deep {U}zawa for {PDE} constrained optimisation},
	url = {https://arxiv.org/abs/2410.17359},
	year = {2024},
	bdsk-url-1 = {https://arxiv.org/abs/2410.17359}}

@misc{loulakis2023newap,
	archiveprefix = {arXiv},
	author = {Michail Loulakis and Charalambos G. Makridakis},
	eprint = {2306.13784},
	howpublished = {arXiv2306.13784},
	primaryclass = {stat.ML},
	title = {A new approach to generalisation error of machine learning algorithms: Estimates and convergence},
	url = {https://arxiv.org/abs/2306.13784},
	year = {2023},
	bdsk-url-1 = {https://arxiv.org/abs/2306.13784}}

@misc{makr_pryer2024uzLagr,
	archiveprefix = {arXiv},
	author = {Charalambos G. Makridakis and Aaron Pim and Tristan Pryer},
	date-added = {2024-12-03 20:18:33 +0200},
	date-modified = {2024-12-03 20:21:30 +0200},
	eprint = {2411.08702},
	howpublished = {arXiv2411.08702},
	primaryclass = {math.NA},
	title = {A Deep {U}zawa-{L}agrange Multiplier Approach for Boundary Conditions in {PINN}s and Deep {R}itz Methods},
	url = {https://arxiv.org/abs/2411.08702},
	year = {2024},
	bdsk-url-1 = {https://arxiv.org/abs/2411.08702}}

@article{Biswas:2022aa,
	abstract = {In this study, we provide error estimates and stability analysis of deep learning techniques for certain partial differential equations including the incompressible Navier--Stokes equations. In particular, we obtain explicit error estimates (in suitable norms) for the solution computed by optimizing a loss function in a Deep Neural Network approximation of the solution, with a fixed complexity.},
	author = {Biswas, A. and Tian, J. and Ulusoy, S.},
	date = {2022/07/01},
	date-added = {2024-12-03 19:57:39 +0200},
	date-modified = {2024-12-03 19:57:39 +0200},
	doi = {10.1007/s00211-022-01294-z},
	id = {Biswas2022},
	isbn = {0945-3245},
	journal = {Numerische Mathematik},
	number = {3},
	pages = {753--777},
	title = {Error estimates for deep learning methods in fluid dynamics},
	url = {https://doi.org/10.1007/s00211-022-01294-z},
	volume = {151},
	year = {2022},
	bdsk-url-1 = {https://doi.org/10.1007/s00211-022-01294-z}}

@article{AMN3,
	author = {Akrivis, Georgios and Makridakis, Charalambos and Nochetto, Ricardo H.},
	doi = {10.1007/s00211-011-0363-6},
	fjournal = {Numerische Mathematik},
	issn = {0029-599X,0945-3245},
	journal = {Numer. Math.},
	mrclass = {65J08 (35K90 35R20 65M12 65M15 65M60)},
	mrnumber = {2810802},
	mrreviewer = {Dimitri\ Breda},
	number = {3},
	pages = {429--456},
	title = {Galerkin and {R}unge-{K}utta methods: unified formulation, a posteriori error estimates and nodal superconvergence},
	url = {https://doi.org/10.1007/s00211-011-0363-6},
	volume = {118},
	year = {2011},
	bdsk-url-1 = {https://doi.org/10.1007/s00211-011-0363-6}}

@article{AMN2,
	author = {Akrivis, Georgios and Makridakis, Charalambos and Nochetto, Ricardo H.},
	doi = {10.1007/s00211-009-0254-2},
	fjournal = {Numerische Mathematik},
	issn = {0029-599X,0945-3245},
	journal = {Numer. Math.},
	mrclass = {65M15 (65M60 65M70)},
	mrnumber = {2557872},
	mrreviewer = {Mohammad\ Asadzadeh},
	number = {1},
	pages = {133--160},
	title = {Optimal order a posteriori error estimates for a class of {R}unge-{K}utta and {G}alerkin methods},
	url = {https://doi.org/10.1007/s00211-009-0254-2},
	volume = {114},
	year = {2009},
	bdsk-url-1 = {https://doi.org/10.1007/s00211-009-0254-2}}

@article{DHP,
	author = {Denk, Robert and Hieber, Matthias and Pr\"uss, Jan},
	doi = {10.1090/memo/0788},
	fjournal = {Memoirs of the American Mathematical Society},
	issn = {0065-9266,1947-6221},
	journal = {Mem. Amer. Math. Soc.},
	mrclass = {35-02 (35J30 35K25 42B15)},
	mrnumber = {2006641},
	mrreviewer = {Alain\ Brillard},
	number = {788},
	pages = {viii+114},
	title = {{$R$}-boundedness, {F}ourier multipliers and problems of elliptic and parabolic type},
	url = {https://doi.org/10.1090/memo/0788},
	volume = {166},
	year = {2003},
	bdsk-url-1 = {https://doi.org/10.1090/memo/0788}}

@article{B_Canuto_P_Vpinn_quadrature_2022,
	author = {Berrone, Stefano and Canuto, Claudio and Pintore, Moreno},
	date-added = {2023-07-25 20:22:22 +0300},
	date-modified = {2023-07-25 20:23:25 +0300},
	doi = {10.1007/s10915-022-01950-4},
	fjournal = {Journal of Scientific Computing},
	issn = {0885-7474,1573-7691},
	journal = {J. Sci. Comput.},
	mrclass = {68T07 (35J20 35Q93 65K10 65N20)},
	mrnumber = {4460145},
	number = {3},
	pages = {Paper No. 100, 27},
	title = {Variational physics informed neural networks: the role of quadratures and test functions},
	url = {https://doi.org/10.1007/s10915-022-01950-4},
	volume = {92},
	year = {2022},
	bdsk-url-1 = {https://doi.org/10.1007/s10915-022-01950-4}}

@misc{kharazmi2019variational,
	archiveprefix = {arXiv},
	author = {E. Kharazmi and Z. Zhang and G. E. Karniadakis},
	date-added = {2023-07-25 18:58:26 +0300},
	date-modified = {2023-07-25 18:58:26 +0300},
	eprint = {1912.00873},
	primaryclass = {cs.NE},
	title = {Variational Physics-Informed Neural Networks For Solving Partial Differential Equations},
	year = {2019}}

@article{georgoulis2023discrete,
	author = {Georgoulis, Emmanuil H and Loulakis, Michail and Tsiourvas, Asterios},
	date-added = {2023-07-10 12:45:00 +0300},
	date-modified = {2023-07-10 12:45:00 +0300},
	journal = {Communications in Nonlinear Science and Numerical Simulation},
	pages = {106893},
	publisher = {Elsevier},
	title = {Discrete gradient flow approximations of high dimensional evolution partial differential equations via deep neural networks},
	volume = {117},
	year = {2023}}

@article{sukumar2022exact,
	author = {Sukumar, N and Srivastava, Ankit},
	date-added = {2023-07-10 11:44:03 +0300},
	date-modified = {2023-07-10 11:44:03 +0300},
	journal = {Computer Methods in Applied Mechanics and Engineering},
	pages = {114333},
	publisher = {Elsevier},
	title = {Exact imposition of boundary conditions with distance functions in physics-informed deep neural networks},
	volume = {389},
	year = {2022}}

@article{leykekhman2017discrete,
	author = {Leykekhman, Dmitriy and Vexler, Boris},
	date-added = {2023-07-09 13:45:01 +0300},
	date-modified = {2023-07-11 09:50:20 +0300},
	journal = {Numerische Mathematik},
	number = {3},
	pages = {923--952},
	publisher = {Springer},
	title = {Discrete maximal parabolic regularity for {G}alerkin finite element methods},
	volume = {135},
	year = {2017}}

@article{akrivis2022maximal,
	author = {Akrivis, Georgios and Makridakis, Charalambos},
	date-added = {2023-07-09 13:42:38 +0300},
	date-modified = {2023-07-11 09:49:56 +0300},
	journal = {SIAM Journal on Numerical Analysis},
	number = {1},
	pages = {180--194},
	publisher = {SIAM},
	title = {On maximal regularity estimates for discontinuous {G}alerkin time-discrete methods},
	volume = {60},
	year = {2022}}

@article{kovacs2016stable,
	author = {Kov{\'a}cs, Bal{\'a}zs and Li, Buyang and Lubich, Christian},
	date-added = {2023-07-09 13:39:18 +0300},
	date-modified = {2023-07-09 13:39:18 +0300},
	journal = {SIAM Journal on Numerical Analysis},
	number = {6},
	pages = {3600--3624},
	publisher = {SIAM},
	title = {A-stable time discretizations preserve maximal parabolic regularity},
	volume = {54},
	year = {2016}}

@article{muller2020deep,
	author = {M{\"u}ller, Johannes and Zeinhofer, Marius},
	date-added = {2023-07-09 10:22:50 +0300},
	date-modified = {2023-07-09 10:26:38 +0300},
	journal = {arXiv preprint arXiv:1912.03937},
	title = {Deep {R}itz revisited},
	year = {2019}}

@article{MakrN2006posteriori,
	author = {Makridakis, Charalambos and Nochetto, Ricardo H},
	date-added = {2023-07-09 10:14:15 +0300},
	date-modified = {2023-07-09 10:14:48 +0300},
	journal = {Numerische Mathematik},
	number = {4},
	pages = {489--514},
	publisher = {Springer},
	title = {A posteriori error analysis for higher order dissipative methods for evolution problems},
	volume = {104},
	year = {2006}}

@article{e2017deep,
	author = {Weinan E and Bing Yu},
	date-added = {2023-07-08 21:04:12 +0300},
	date-modified = {2023-07-08 21:06:08 +0300},
	journal = {Communications in Mathematics and Statistics},
	number = {1},
	pages = {1--12},
	publisher = {Springer},
	title = {The deep {R}itz method: a deep learning-based numerical algorithm for solving variational problems},
	volume = {6},
	year = {2018}}

@article{kevr_1992discrete,
	author = {Rico-Martinez, Ramiro and Krischer, K and Kevrekidis, IG and Kube, MC and Hudson, JL},
	date-added = {2023-07-08 20:21:05 +0300},
	date-modified = {2023-07-08 20:21:25 +0300},
	journal = {Chemical Engineering Communications},
	number = {1},
	pages = {25--48},
	publisher = {Taylor \& Francis},
	title = {Discrete-vs. continu\-ous-time nonlinear signal processing of Cu electrodissolution data},
	volume = {118},
	year = {1992}}

@book{braides2002gamma,
	author = {Braides, Andrea},
	date-added = {2023-07-05 09:31:04 +0300},
	date-modified = {2023-07-05 09:31:04 +0300},
	publisher = {Clarendon Press},
	title = {Gamma-convergence for Beginners},
	volume = {22},
	year = {2002}}

@article{chen2022deep,
	archiveprefix = {arXiv},
	author = {Xiaoli Chen and Phoebus Rosakis and Zhizhang Wu and Zhiwen Zhang},
	date-added = {2023-07-02 21:48:31 +0300},
	date-modified = {2023-07-09 10:28:14 +0300},
	eprint = {2206.13937},
	journal = {ar{X}iv preprint 2206.13937},
	primaryclass = {cs.CE},
	title = {A Deep Learning Approach to Nonconvex Energy Minimization for Martensitic Phase Transitions},
	year = {2022}}

@article{Berg_2018,
	author = {Jens Berg and Kaj Nystr{\"o}m},
	date-added = {2023-07-02 21:18:13 +0300},
	date-modified = {2023-07-02 21:18:13 +0300},
	doi = {10.1016/j.neucom.2018.06.056},
	journal = {Neurocomputing},
	month = {nov},
	pages = {28--41},
	publisher = {Elsevier {BV}},
	title = {A unified deep artificial neural network approach to partial differential equations in complex geometries},
	url = {https://doi.org/10.1016%2Fj.neucom.2018.06.056},
	volume = {317},
	year = 2018,
	bdsk-url-1 = {https://doi.org/10.1016%2Fj.neucom.2018.06.056},
	bdsk-url-2 = {https://doi.org/10.1016/j.neucom.2018.06.056}}

@article{Lagaris_1998,
	author = {I.E. Lagaris and A. Likas and D.I. Fotiadis},
	date-added = {2023-07-02 13:29:44 +0300},
	date-modified = {2023-07-02 13:29:44 +0300},
	doi = {10.1109/72.712178},
	journal = {{IEEE} Transactions on Neural Networks},
	number = {5},
	pages = {987--1000},
	publisher = {Institute of Electrical and Electronics Engineers ({IEEE})},
	title = {Artificial neural networks for solving ordinary and partial differential equations},
	url = {https://doi.org/10.1109%2F72.712178},
	volume = {9},
	year = 1998,
	bdsk-url-1 = {https://doi.org/10.1109%2F72.712178},
	bdsk-url-2 = {https://doi.org/10.1109/72.712178}}

@article{Raissi_2018,
	author = {Maziar Raissi and George Em Karniadakis},
	date-added = {2023-07-02 13:25:35 +0300},
	date-modified = {2023-07-02 13:25:35 +0300},
	doi = {10.1016/j.jcp.2017.11.039},
	journal = {Journal of Computational Physics},
	month = {mar},
	pages = {125--141},
	publisher = {Elsevier {BV}},
	title = {Hidden physics models: Machine learning of nonlinear partial differential equations},
	url = {https://doi.org/10.1016%2Fj.jcp.2017.11.039},
	volume = {357},
	year = 2018,
	bdsk-url-1 = {https://doi.org/10.1016%2Fj.jcp.2017.11.039},
	bdsk-url-2 = {https://doi.org/10.1016/j.jcp.2017.11.039}}

@article{shin2020error,
	archiveprefix = {arXiv},
	author = {Yeonjong Shin and Zhongqiang Zhang and George Em Karniadakis},
	date-added = {2023-07-02 13:09:19 +0300},
	date-modified = {2023-07-09 10:36:37 +0300},
	eprint = {2010.08019},
	journal = {ar{X}iv preprint 2010.08019},
	primaryclass = {math.NA},
	title = {Error estimates of residual minimization using neural networks for linear PDEs},
	year = {2020}}

@article{SSpiliopoulos:2018,
	author = {Sirignano, Justin and Spiliopoulos, Konstantinos},
	date-added = {2023-07-02 13:03:05 +0300},
	date-modified = {2023-07-02 13:03:45 +0300},
	doi = {10.1016/j.jcp.2018.08.029},
	fjournal = {Journal of Computational Physics},
	issn = {0021-9991,1090-2716},
	journal = {J. Comput. Phys.},
	mrclass = {65M99 (68T05)},
	mrnumber = {3874585},
	pages = {1339--1364},
	title = {D{GM}: a deep learning algorithm for solving partial differential equations},
	url = {https://doi.org/10.1016/j.jcp.2018.08.029},
	volume = {375},
	year = {2018},
	bdsk-url-1 = {https://doi.org/10.1016/j.jcp.2018.08.029}}

@book{SZ,
	author = {Zheng, S.},
	date-added = {2023-06-20 20:20:58 +0300},
	date-modified = {2023-06-20 20:22:09 +0300},
	publisher = {Chapman and Hall/CRC},
	title = {Nonlinear Evolution Equations},
	year = {2004},
	bdsk-url-1 = {https://doi-org.sussex.idm.oclc.org/10.1007/978-3-642-54075-2},
	bdsk-url-2 = {https://doi.org/10.1007/978-3-642-54075-2}}

@article{Xu,
	author = {Xu, Jinchao},
	date-added = {2023-06-20 20:19:12 +0300},
	date-modified = {2023-06-20 20:19:27 +0300},
	doi = {10.4208/cicp.oa-2020-0191},
	fjournal = {Communications in Computational Physics},
	issn = {1815-2406,1991-7120},
	journal = {Commun. Comput. Phys.},
	mrclass = {65N99 (62M45 68T07)},
	mrnumber = {4188519},
	number = {5},
	pages = {1707--1745},
	title = {Finite neuron method and convergence analysis},
	url = {https://doi.org/10.4208/cicp.oa-2020-0191},
	volume = {28},
	year = {2020},
	bdsk-url-1 = {https://doi.org/10.4208/cicp.oa-2020-0191}}

@misc{GGM_pinn_2023,
	archiveprefix = {arXiv},
	author = {I. Gkanis and D. Gazoulis and C.G. Makridakis},
	eprint = {2308.05423},
	howpublished = {arXiv2308.05423},
	primaryclass = {math.NA},
	title = {On the Stability and Convergence of Physics Informed Neural Networks},
	url = {https://arxiv.org/abs/2308.05423},
	year = {2023},
	bdsk-url-1 = {https://arxiv.org/abs/2308.05423}}

@book{DeGiorgi_sel_papers:2013,
	author = {De Giorgi, Ennio},
	date-added = {2023-06-08 09:16:33 +0300},
	date-modified = {2023-06-08 09:18:19 +0300},
	isbn = {978-3-642-40379-8},
	mrclass = {49-03 (01A70 01A75)},
	mrnumber = {3185411},
	note = {[Author name on title page: Ennio Giorgi], Edited by Luigi Ambrosio, Gianni Dal Maso, Marco Forti, Mario Miranda and Sergio Spagnolo, Reprint of the 2006 edition [MR2229237]},
	pages = {x+888},
	publisher = {Springer, Heidelberg},
	series = {Springer Collected Works in Mathematics},
	title = {Selected papers},
	year = {2013}}

@article{Grohs_space_time_approx:2022,
	author = {Abdeljawad, Ahmed and Grohs, Philipp},
	date-added = {2023-05-31 20:09:04 +0300},
	date-modified = {2023-07-09 12:48:18 +0300},
	doi = {10.1142/S0219530522500014},
	fjournal = {Analysis and Applications},
	issn = {0219-5305},
	journal = {Anal. Appl.},
	mrclass = {41A46 (35A35 41A25 46E35 68T07)},
	mrnumber = {4420580},
	number = {3},
	pages = {499--541},
	title = {Approximations with deep neural networks in {S}obolev time-space},
	url = {https://doi.org/10.1142/S0219530522500014},
	volume = {20},
	year = {2022},
	bdsk-url-1 = {https://doi-org.sussex.idm.oclc.org/10.1142/S0219530522500014},
	bdsk-url-2 = {https://doi.org/10.1142/S0219530522500014}}

@article{Grohs:space_time:2023,
	author = {Grohs, Philipp and Hornung, Fabian and Jentzen, Arnulf and Zimmermann, Philipp},
	date-added = {2023-05-31 20:00:56 +0300},
	date-modified = {2023-06-19 20:54:14 +0300},
	doi = {10.1007/s10444-022-09970-2},
	fjournal = {Advances in Computational Mathematics},
	issn = {1019-7168},
	journal = {Adv. Comput. Math.},
	mrclass = {65M99 (65M15 68T07)},
	mrnumber = {4534487},
	number = {1},
	pages = {Paper No. 4, 78},
	title = {Space-time error estimates for deep neural network approximations for differential equations},
	url = {https://doi.org/10.1007/s10444-022-09970-2},
	volume = {49},
	year = {2023},
	bdsk-url-1 = {https://doi-org.sussex.idm.oclc.org/10.1007/s10444-022-09970-2},
	bdsk-url-2 = {https://doi.org/10.1007/s10444-022-09970-2}}

@article{Mishra_gen_err_pinn:2023,
	author = {Mishra, Siddhartha and Molinaro, Roberto},
	date-added = {2023-05-31 19:40:47 +0300},
	date-modified = {2023-06-06 12:11:46 +0300},
	doi = {10.1093/imanum/drab093},
	fjournal = {IMA Journal of Numerical Analysis},
	issn = {0272-4979},
	journal = {IMA J. Numer. Anal.},
	mrclass = {65M08 (65K10 65M99 68T07)},
	mrnumber = {4565573},
	number = {1},
	pages = {1--43},
	title = {Estimates on the generalization error of physics-informed neural networks for approximating {PDE}s},
	url = {https://doi.org/10.1093/imanum/drab093},
	volume = {43},
	year = {2023},
	bdsk-url-1 = {https://doi-org.sussex.idm.oclc.org/10.1093/imanum/drab093},
	bdsk-url-2 = {https://doi.org/10.1093/imanum/drab093}}

@article{Mishra_dispersive:2021,
	author = {Bai, Genming and Koley, Ujjwal and Mishra, Siddhartha and Molinaro, Roberto},
	date-added = {2023-05-31 19:40:47 +0300},
	date-modified = {2023-06-06 12:11:40 +0300},
	doi = {10.4208/jcm.2101-m2020-0342},
	fjournal = {Journal of Computational Mathematics},
	issn = {0254-9409},
	journal = {J. Comput. Math.},
	mrclass = {65J15 (35Q53 68T07)},
	mrnumber = {4390054},
	number = {6},
	pages = {816--847},
	title = {Physics informed neural networks ({PINN}s) for approximating nonlinear dispersive {PDE}s},
	url = {https://doi.org/10.4208/jcm.2101-m2020-0342},
	volume = {39},
	year = {2021},
	bdsk-url-1 = {https://doi-org.sussex.idm.oclc.org/10.4208/jcm.2101-m2020-0342},
	bdsk-url-2 = {https://doi.org/10.4208/jcm.2101-m2020-0342}}

@article{Karniadakis:pinn:conv:2019,
	author = {Shin, Yeonjong and Darbon, J\'{e}r\^{o}me and Karniadakis, George Em},
	date-modified = {2023-06-06 12:11:48 +0300},
	doi = {10.4208/cicp.oa-2020-0193},
	fjournal = {Communications in Computational Physics},
	issn = {1815-2406},
	journal = {Commun. Comput. Phys.},
	mrclass = {65N99 (35J25 35K20 65M12 65M99 65N12 68T07)},
	mrnumber = {4188529},
	number = {5},
	pages = {2042--2074},
	title = {On the convergence of physics informed neural networks for linear second-order elliptic and parabolic type {PDE}s},
	url = {https://doi.org/10.4208/cicp.oa-2020-0193},
	volume = {28},
	year = {2020},
	bdsk-url-1 = {https://doi-org.sussex.idm.oclc.org/10.4208/cicp.oa-2020-0193},
	bdsk-url-2 = {https://doi.org/10.4208/cicp.oa-2020-0193}}

@article{Karniadakis:pinn:orig:2019,
	author = {Raissi, M. and Perdikaris, P. and Karniadakis, G. E.},
	date-modified = {2023-06-06 12:11:48 +0300},
	doi = {10.1016/j.jcp.2018.10.045},
	fjournal = {Journal of Computational Physics},
	issn = {0021-9991},
	journal = {J. Comput. Phys.},
	mrclass = {65M70 (68T05)},
	mrnumber = {3881695},
	pages = {686--707},
	title = {Physics-informed neural networks: a deep learning framework for solving forward and inverse problems involving nonlinear partial differential equations},
	url = {https://doi.org/10.1016/j.jcp.2018.10.045},
	volume = {378},
	year = {2019},
	bdsk-url-1 = {https://doi-org.sussex.idm.oclc.org/10.1016/j.jcp.2018.10.045},
	bdsk-url-2 = {https://doi.org/10.1016/j.jcp.2018.10.045}}

@article{Mishra:appr:rough:2022,
	author = {De Ryck, Tim and Mishra, Siddhartha and Ray, Deep},
	date-modified = {2023-06-06 12:11:42 +0300},
	doi = {10.1007/s40324-022-00299-w},
	fjournal = {SeMA Journal. Boletin de la Sociedad Espan\~{n}ola de Matem\'{a}tica Aplicada},
	issn = {2254-3902},
	journal = {SeMA J.},
	mrclass = {65D05 (65D15 65G99 68T07)},
	mrnumber = {4476284},
	number = {3},
	pages = {399--440},
	title = {On the approximation of rough functions with deep neural networks},
	url = {https://doi.org/10.1007/s40324-022-00299-w},
	volume = {79},
	year = {2022},
	bdsk-url-1 = {https://doi-org.sussex.idm.oclc.org/10.1007/s40324-022-00299-w},
	bdsk-url-2 = {https://doi.org/10.1007/s40324-022-00299-w}}

@article{Mishra:pinn:inv:2022,
	author = {Mishra, Siddhartha and Molinaro, Roberto},
	date-modified = {2023-06-06 12:11:46 +0300},
	doi = {10.1093/imanum/drab032},
	fjournal = {IMA Journal of Numerical Analysis},
	issn = {0272-4979},
	journal = {IMA J. Numer. Anal.},
	mrclass = {65J22 (47N20)},
	mrnumber = {4410734},
	mrreviewer = {Frank Werner},
	number = {2},
	pages = {981--1022},
	title = {Estimates on the generalization error of physics-informed neural networks for approximating a class of inverse problems for {PDE}s},
	url = {https://doi.org/10.1093/imanum/drab032},
	volume = {42},
	year = {2022},
	bdsk-url-1 = {https://doi-org.sussex.idm.oclc.org/10.1093/imanum/drab032},
	bdsk-url-2 = {https://doi.org/10.1093/imanum/drab032}}

\end{document}